\newtheorem{Theorem}{Theorem}[section]
\newtheorem{Lemma}[Theorem]{Lemma}
\newtheorem{Question}[Theorem]{Question}
\newtheorem{Conjecture}[Theorem]{Conjecture}
\theoremstyle{definition}
\newtheorem{Definition}[Theorem]{Definition}
\newtheorem{Example}[Theorem]{Example}
\newtheorem{Remark}[Theorem]{Remark}
\newcommand{\C}{{\mathbb C}}
\newcommand{\N}{{\mathbb N}}
\newcommand{\Z}{{\mathbb Z}}
\newcommand{\R}{{\mathbb R}}
\newcommand{\Cx}{{\C^\times}}
\newcommand{\ft}{{\mathfrak t}}
\newcommand{\fg}{{\mathfrak g}}
\newcommand{\fh}{{\mathfrak h}}
\newcommand{\cN}{{\mathcal N}}
\newcommand{\cF}{{\mathcal F}}
\newcommand{\gl}{{\mathfrak sl}}
\newcommand{\PP}{{\mathbb P}}
\newcommand{\sslash}{/\!/}
\newcommand{\ssslash}{/\!/\!/}
\newcommand{\Bv}{{\mathbf v}}
\newcommand{\Bw}{{\mathbf w}}
\newcommand{\IC}{{\mathrm{IC}}}
\newcommand{\tY}{{\mathscr{Y}}}
\newcommand{\tX}{{\mathscr{X}}}
\newcommand{\cA}{{\mathcal A}}
\newcommand{\A}{{\mathscr A}}
\newcommand{\cO}{{\mathcal O}}
\newcommand{\cK}{{\mathcal K}}
\newcommand{\cM}{{\mathcal M}}
\newcommand{\cP}{{\mathcal P}}
\newcommand{\ul}{{\underline \la}}
\newcommand{\la}{{\lambda}}
\newcommand{\cW}{{\mathcal W}}
\newcommand{\oW}{{\overline{\cW}}}
\newcommand{\ssl}{{\mathfrak{sl}}}
\newcommand{\Gr}{{\mathsf{Gr}}}
\newcommand{\tG}{\widetilde{G}}
\newcommand{\Dx}{D^\times}
\newcommand{\BB}{{\mathbb B}}
\newcommand{\cV}{{\mathcal V}}
\newcommand{\Kreg}{K^{\mathrm{reg}}}
\newcommand{\Spec}{\operatorname{Spec}}
\newcommand{\Proj}{\operatorname{Proj}}
\newcommand{\Hom}{\operatorname{Hom}}
\newcommand{\rk}{\operatorname{rk}}
\newcommand{\codim}{\operatorname{codim}}
\newcommand{\Sym}{\operatorname{Sym}}
\newcommand{\Irr}{\operatorname{Irr}}
\newcommand{\Maps}{\operatorname{Maps}}
\newcommand{\End}{\operatorname{End}}
\newcommand{\Oof}[1]{{{#1}{\text{-mod}_\cO}}}
\newcommand{\Mof}[1]{{{#1}{\text{-mod}}}}
\newcommand{\Vect}{\mathsf{Vect}}
\newcommand{\Rep}{\mathsf{Rep}}
\author{Joel Kamnitzer}
\address{J.~Kamnitzer: Department of Mathematics, University of Toronto, Canada}
\email{jkamnitz@math.toronto.edu}
\title{Symplectic resolutions, symplectic duality, and Coulomb branches}
\begin{document}
	
\begin{abstract}
	Symplectic resolutions are an exciting new frontier of research in representation theory.  One of the most fascinating aspects of this study is symplectic duality: the observation that these resolutions come in pairs with matching properties.  The Coulomb branch construction allows us to produce and study many of these dual pairs.  These notes survey much recent work in this area including quantization, categorification, and enumerative geometry.  We particularly focus on ADE quiver varieties and affine Grassmannian slices.
\end{abstract}
	
\maketitle
\textit{Dedicated to the memory of Tom Nevins}

\section{Introduction}

\epigraph{Symplectic resolutions are the Lie algebras of the $21^{st}$ century}{Andrei Okounkov}

\subsection{Symplectic resolutions}

In the $20^{th}$ century, mathematicians studied the representation theory of semisimple Lie algebras using the geometry of the flag variety and its cotangent bundle.  In the $21^{st} $ century, we have generalized this study to the setting where the cotangent bundle of the flag variety is replaced by a \textbf{symplectic resolution}.  This is a smooth symplectic variety $ Y $ which resolves an affine Poisson variety $ X $.  In parallel, the semisimple Lie algebra is replaced by the deformation quantization $ A $ of $ X $.  
In this survey, we consider many important examples of symplectic resolutions, including hypertoric varieties, quiver varieties, and affine Grassmannian slices. 

\subsection{Topology and quantization}
We also assume that $ Y, X $ are equipped with two torus actions, one conical, and the other Hamiltonian.  In section \ref{se:top}, we study the equivariant topology of the map $ \pi : Y \rightarrow X $, following the work of Kaledin \cite{Ka}, McGerty-Nevins \cite{MN}, Nakajima \cite{NakPCMI} and others. In particular, we focus here on the \textbf{hyperbolic BBD decomposition}
\begin{equation*} 
	H(Y^+) = \bigoplus_{j \in J} H(\overline{X_j^+}) \otimes H(F_j)
\end{equation*}
which gives a decomposition of the homology of the attracting set $ Y^+ $ (for a Hamiltonian $ \Cx$-action) in terms of the fibres $ F_j$ of $ \pi $ and the symplectic leaves $X_j$ of $ X $.

There is a fascinating interplay between the topology of $ Y $ and the representation theory of its quantization $ A$.  In section \ref{se:alg}, following the ideas of Braden-Licata-Proudfoot-Webster \cite{BLPW}, we introduce category $ \mathcal O $ for $ A$, which we regard as a categorification of $ H(Y^+) $. 

\subsection{Symplectic duality / 3d mirror symmetry}
There is a beautiful duality between pairs of symplectic resolutions, first observed in math by Braden-Licata-Proudfoot-Webster \cite{BLPW}, and closely related to duality for 3d supersymmetric quantum field theories (see Intrilligator-Seiberg \cite{IS}).  Certain structures on one symplectic resolution $ Y $ match other structures on its dual $ Y^!$, see section \ref{se:SD}.  In particular, there is an isomorphism $ H(Y^+) \cong H({Y^!}^+) $ which reverses the hyperbolic BBD decompositions.  An essential recent breakthrough (described in section \ref{se:CB}) is the construction of Coulomb branches by Braverman-Finkelberg-Nakajima \cite{BFN1} which allows us to closely link many symplectic dual pairs.

\subsection{An application}
In the last twenty years of the $20^{th}$ century, mathematicians developed two geometric constructions of the finite-dimensional irreducible representations of a semisimple Lie algebra $ \mathfrak g $.  The first construction involves the cohomology of quiver varieties constructed using the Dynkin diagram of $ \mathfrak g$.  The second uses the geometric Satake correspondence which constructs representations using the affine Grassmannian of the Langlands dual group $G^\vee$.  
\begin{Question}
	What is the relationship between these two geometric constructions?
\end{Question}
In the first twenty years of the $21^{st} $ century, both of these constructions were placed under the framework of symplectic resolutions and the above question was answered using symplectic duality.  This will be our focus in section \ref{se:SDQV}.

\subsection{Acknowlegements}
I would like to thank G. Bellamy, D. Ben-Zvi, R. Bezrukavnikov, A. Braverman, T. Dimofte, M. Finkelberg, D. Gaiotto, V. Ginzburg, J. Hilburn, A. Licata, M. McBreen, K. McGerty, H. Nakajima, T. Nevins, N. Proudfoot, B. Webster, A. Weekes, O. Yacobi for many insipring conversations and lectures on these topics.  

These notes are based on my lectures at the 2021 IHES summer school on Enumerative Geometry, Physics and Representation Theory.  I thank A. Negut, F. Sala, and O. Schiffmann for organizing the summer school and I thank M. McBreen and Y. Zhou for leading discussion sections which accompanied my lectures.  I also thank M. McBreen, H. Nakajima, N. Proudfoot, A. Referee, A. Weekes, and O. Yacobi for helpful comments on these notes.  Finally, I thank B. Kamnitzer for her drawings.

\section{Symplectic resolutions}
 \label{se:def}
\begin{Definition}
	A \textbf{symplectic resolution} is a morphism $ \pi : Y \rightarrow X $ of complex algebraic varieties, where
	\begin{itemize}
		\item $ Y $ is a smooth and carries a symplectic structure,
		\item $ X $ is affine, normal, and carries a Poisson structure,
		 \item $ \pi $ is projective, birational, and Poisson.
		\end{itemize}
	A symplectic resolution is called \textbf{conical} if we are given actions of $ \Cx $ on $ X$ and $ Y $, compatible with $ \pi$, such that $ \Cx $ contracts $ X $ to a single point, denoted $ 0 $.  We also assume that $ \Cx $ scales the symplectic form with weight $ 2 $.  The central fibre $ F_0 = \pi^{-1}(0)$ is called the \textbf{core} of $ Y $. 
	
	A conical symplectic resolution is called \textbf{Hamiltonian} if we are given Hamiltonian actions of a torus $ T $ on $ X$ and $ Y $, such that $ \pi $ is $T$-equivariant.  We also assume that the $ T $ action commutes under the conical $ \Cx $ action, and that $ Y^T $ is finite.
\end{Definition}
\begin{Remark}
	Sometimes it is useful to consider sympletic resolutions which do not have all the above structures.  
\begin{enumerate}
	\item The ``resolution'' $Y$ may not be smooth.  In this case, most of the desired structures will still exist (such as deformations and quantizations), if we assume that $ X $ has \textbf{symplectic singularities} (which means that $ \omega $ extends as a 2-form on any resolution of $ X $) and $ Y $ is a $\mathbb Q$-\textbf{factorial terminalization}, see \cite{Lo}.
	\item There may not be a Hamiltonian torus action with finitely many fixed points.  In this case, the hyperbolic stalk functor described below must be replaced by hyperbolic restriction.
\end{enumerate}
\end{Remark}

\begin{Remark} \label{re:nodeg1}
	The conical $\Cx$-action on $ X $ gives a $ \Z$-grading to its coordinate ring.  Because the action is contracting, $ \C[X]_k = 0 $ for $ k < 0 $ and $ \C[X]_0 = \C $.  In what follows, we will also assume that $ \C[X]_1 = 0 $, as this implies that $ \C[X]_+ $ is a Poisson ideal, and hence $ \{0 \} $ is a symplectic leaf.
	\end{Remark}

We now consider the main examples of symplectic resolutions.

\subsection{Cotangent bundle of $\PP^1$}
	The simplest example of a symplectic resolution is 
	$$ \pi :Y = T^* \PP^1 \rightarrow \cN_{\gl_2} = X $$
	where $ \cN_{\gl_2} $ is the variety of nilpotent 2$\times$2 matrices.
	\begin{figure}
	\includegraphics[trim=0 130 5 80, clip,width=0.8\textwidth]{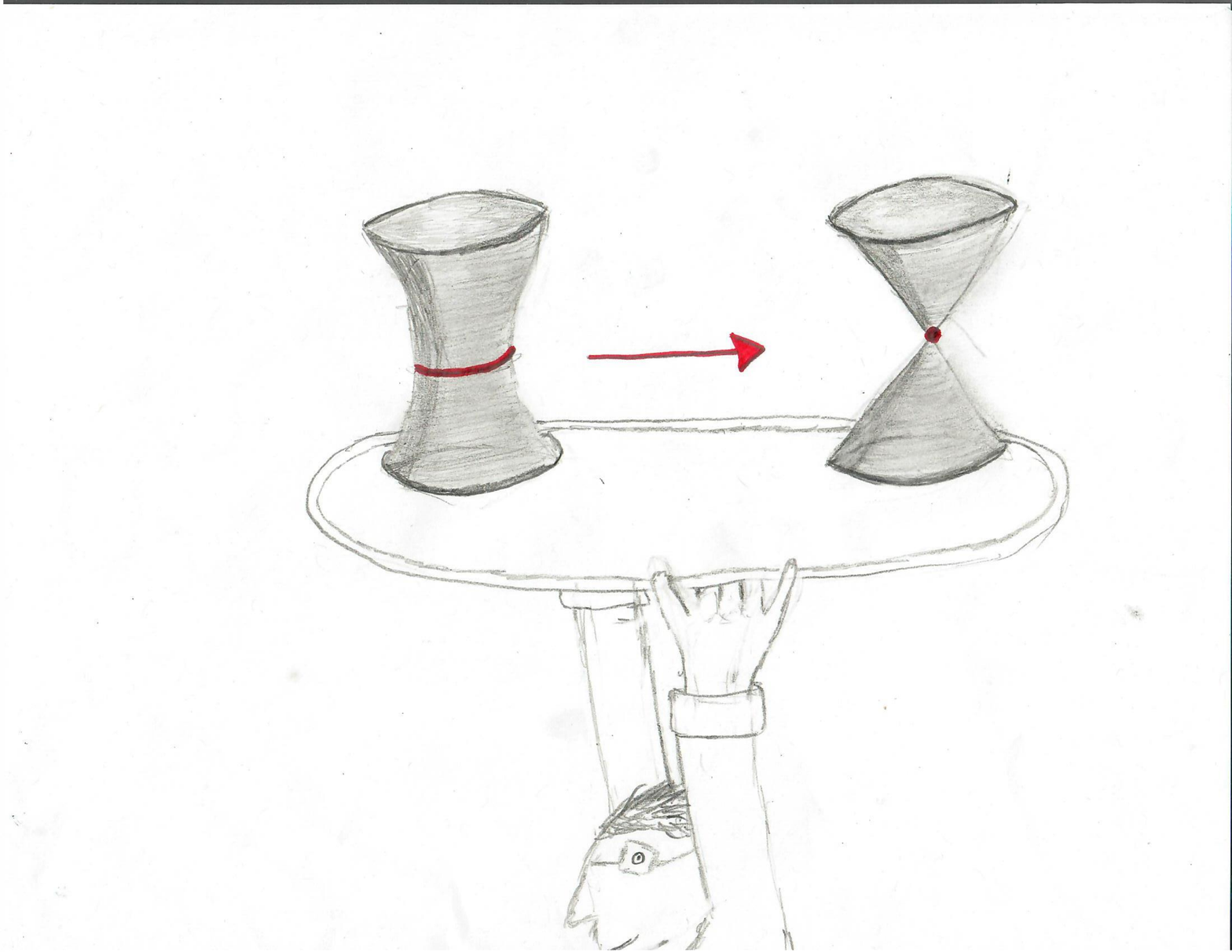}
	\caption{The simplest symplectic resolution, $T^* \PP^1 \rightarrow \cN $.}
	\end{figure}
	We can think of these spaces more explicitly as
	\begin{gather*}
	T^* \PP^1 = \{(A,L) \in \cN_{\gl_2} \times \PP^1 : L \subset \C^2,\, A \C^2 \subset L,\, A L = 0 \} \\
	\cN_{\gl_2} = \Bigl\{ A = \begin{bmatrix} w & u \\ v & -w \end{bmatrix} \in M_2(\C) :  w^2 + uv = 0 \Bigr\} \cong \C^2 / \Z_2
	\end{gather*}

The map $ \pi $ is an isomorphism away from $ 0 $ and has $ \PP^1 $ fibre over $ 0$ (the core).

We have a conical $ \Cx$-action which scales the matrix $ A$ and a Hamiltonian action of $ T= (\Cx)^2 $, inherited from its action on $ \C^2$.  The moment map for this action is given by projection of the matrix $ A $ to its diagonal, so is given by $ (u,v,w) \mapsto (w, -w) $.

\subsection{Cotangent bundles of (partial) flag varieties}
We can generalize to cotangent bundles of other projective spaces. 
$$
Y = T^* \PP^{n-1} \rightarrow X = \{ A \in M_n(\C) : A^2 = 0, \rk A \le 1 \}
$$
Or, more generally, we let $ G $ be any reductive group and $ P $ a parabolic subgroup.  Then the cotangent bundle of the partial flag variety is a resolution of a nilpotent orbit closure (here $ \mathfrak u_P $ is the nilpotent radical of $ \mathfrak p $).
$$
Y = T^* G/P \rightarrow X = \overline{G \mathfrak{u}_P} \subset \cN_\fg
$$
For $ G = SL_n$, every nilpotent orbit closure appears in this way, but this is not true in other types (these orbits are called \textbf{Richardson} orbits).  Namikawa has extensively studied symplectic resolutions of nilpotent orbit closures, see \cite{Nam0809}.

The conical $ \Cx $ acts by linear scaling on $ X$ (coming from its embedding in the vector space $ \fg $).  The maximal torus $ T \subset G $ acts Hamiltonianly on $ Y $ in the natural way with moment map given by the composition $ X \subset \fg = \fg^* \rightarrow \ft^* $.  The fixed point set $ Y^T $ is in bijection with $ W/W_P $.  

We will be particularly interested in the full flag variety for $ SL_n $, so 
\begin{gather*}
Y = T^* Fl_n = \{(A, V_\bullet) : V_1 \subset \dots \subset V_n = \C^n, A V_i \subseteq V_{i-1} \} \rightarrow \\
X = \cN_{\gl_n} = \{A \in M_n(\C) : A \text{ is nilpotent } \} 
\end{gather*}
In this case, the fibres of $ \pi $ are called Springer fibres and $Y \rightarrow X $ is often called the Springer resolution.

In all these examples, the symplectic structure on $ Y $ comes from the fact that it is a cotangent bundle, and the Poisson structure on $ X $ comes from restricting the Kirillov-Kostant-Souriau Poisson structure on $ \fg^* $.

For further variations on this theme, we could consider Slodowy slices and their resolutions, but for lack of space we will restrain ourselves.

\subsection{Resolutions of Kleinian singularities}  Generalizing $ T^* \PP^1 $ in a different direction, we take $ \Gamma \subset SL_2(\C) $ a finite subgroup.  Under the McKay correspondence, such subgroups are in bijection with simply-laced ADE Dynkin diagrams.  For our purposes, the most important case is
$$
\Z_n = \left\{ \begin{bmatrix} \zeta & 0 \\ 0 & \zeta^{-1} \end{bmatrix} : \zeta^n = 1 \right\} \longleftrightarrow A_{n-1}
$$

The affine GIT quotient $ X := \C^2 \sslash \Gamma $ carries a Poisson structure by descending the usual symplectic structure on $ \C^2$.  More explicitly, $ \C[X] = \C[x,y]^\Gamma $ is a Poisson subalgebra of $ \C[x,y] $.

There is a minimal resolution $ Y = \widetilde{\C^2/\Gamma} $ of $ X $ which can be realized by various means (for example by realizing $ X $ as a Slodowy slice).  The map $ \pi : Y \rightarrow X $ is an isomorphism away from 0 and the core $ \pi^{-1}(0) $ is a chain of projective lines which are glued together according to the corresponding Dynkin diagram.

The conical $ \Cx $ action comes from the scaling action on $ \C^2 $.  On the other hand, the Hamiltonian torus $ T$ is given by the diagonal matrices in $SL_2 $ which commute with $ \Gamma$.  When $ \Gamma = \Z_n $, $ T = \Cx $ is the maximal torus $ SL_2 $.  For other $ \Gamma $, $ T $ is trivial.

\subsection{Higgs branches of gauge theories} \label{se:Higgs}
One way to build symplectic resolutions is by using Hamiltonian reduction.  We begin with a reductive group $ G $ and a representation $ N $.  Then we form $ T^*N = N \oplus N^* $ which comes with a moment map $ \Phi : N \oplus N^* \rightarrow \fg^* $.  Then we take $ \Phi^{-1}(0) $ and quotient by $ G$.  More precisely, we fix a character $ \chi : G \rightarrow \Cx $ and form the projective GIT quotient
$$ T^* N \ssslash_{0,\chi} G := \Phi^{-1}(0) \sslash_\chi G :=  \Proj \left( \bigoplus_{n \in \N} \C[\Phi^{-1}(0)]^{G, n\chi} \right) $$

 We have a natural morphism $ Y = T^*N \ssslash_{0,\chi} G \rightarrow X = T^* N \ssslash_{0,0}  G $.  In the physics literature, $Y $ is called the \textbf{Higgs branch} of the 3d supersymmetric gauge theory defined by $ G,N$.  $ G $ is called the \textbf{gauge group} and $ N $ is called the \textbf{matter}.
 
 The spaces $ Y, X $ carry Poisson structures coming from the usual symplectic structure on $ T^* N$.  This construction will not usually give a symplectic resolution; for example $ Y $ may not be smooth and $ Y \rightarrow X $ might not be birational (see Example \ref{eg:sl2}).   

There is a conical $ \Cx $ action on $ Y $ coming from its scaling action of $ T^* N $.  In order to define a Hamiltonian torus action, we need one piece of data.  We choose an extension $ 1 \rightarrow G \rightarrow \tG \rightarrow T \rightarrow 1 $, where $ T $ is the \textbf{flavour} torus, and an action of $ \tG $ on $ N $, extending the action of $ G $.  Then we obtain a residual Hamiltonian action of $ T $ on $ Y$ and $ X $.  In general, this action does not have finitely many fixed points.

\subsection{Hypertoric varieties} \label{se:hypertoric}
A very important class of symplectic resolutions are hypertoric varieties.  These are very combinatorial, so they are great examples for testing and formulating conjectures.  They were originally introduced by Bielawski-Dancer \cite{BD} from the perspective of hyperk\"ahler geometry, with the name ``toric hyperk\"ahler manifolds''.  Our approach will follow more closely the survey article of Proudfoot \cite{Nick}.

We work in the above setup of Hamiltonian reduction, with $G $ a torus, and $ \tG = (\Cx)^n $, so we have
\begin{equation} \label{eq:tori}
1 \rightarrow G \rightarrow (\Cx)^n \rightarrow T \rightarrow 1
\end{equation}
This gives us a linear action of $ G $ on $T^* \C^n = \C^n \oplus (\C^n)^*$.  We have a moment map $ \Phi : T^* \C^n \rightarrow \C^n \rightarrow \fg^* $ where the first map is given by $ (z_i, w_i) \mapsto (z_iw_i)$.

We fix a generic character $ \chi : G \rightarrow \Cx $ and consider $
Y = \Phi^{-1}(0) \sslash_\chi G $ and $ X = \Phi^{-1}(0) \sslash G $ as above.

The map $ (\Cx)^n \rightarrow T $ is equivalent to a map $ \phi : \Z^n \rightarrow \ft_\Z $ between the coweight lattices of these tori.  We assume that $\phi$ is \textbf{unimodular} (this means if we choose a matrix representing $ \phi$, then every invertible square submatrix has determinant $\pm 1$).  This ensures that $ Y $ is smooth. 

We also assume that the map $ G \rightarrow (\Cx)^n $ is not contained in any coordinate subtorus.  This is needed to ensure that the natural $ \Cx $ action on $ X $ is conical.

We have residual actions of $ T $ on $ Y$ and $ X $ with moment map $ X \rightarrow \ft^*$.

The structure of $ Y $ can be visualized by means of a hyperplane arrangement.  We consider the real vector space $ \ft_\R^* $ and define affine hyperplanes $ H_1, \dots, H_n$ by
$$
H_i := \{ v \in \ft_\R^* : \langle \phi(e_i) , v \rangle = \langle e_i, \tilde{\chi} \rangle \}
$$
where $ \tilde{\chi} \in (\R^n)^* $ is a lift of $ \chi \in \fg_\R^* $. 

These hyperplanes partition $ \ft_\R^* $ into chambers, and $ Y $ contains all the toric varieties associated to these chambers.  In particular, the core $ F_0 $ is the union of the toric varieties associated to the compact chambers.

\begin{Example}
	Our first example of (\ref{eq:tori}) is  
	\begin{gather} \label{eq:exampletori1}
	1 \rightarrow \Cx \xrightarrow{t \mapsto (t, \dots, t)} (\Cx)^n \rightarrow (\Cx)^n/\Cx \rightarrow 1
	\end{gather}
We choose $ \chi : \Cx \rightarrow \Cx $ to be the identity.  The resulting hypertoric variety is $ Y = T^* \PP^{n-1} $.  The hyperplane arrangement lies in $ \ft_\R^* = \{(a_1, \dots, a_n) : a_1 + \dots + a_n = 0 \} $ with affine hyperplanes
$$
H_i = \{ (a_1, \dots, a_n) : a_i = 1 \}
$$
There is one compact chamber given by $ \{(a_1, \dots,a_n) \in \ft_\R^* : a_i \le 1 \} $ which is an $ n$-simplex (whose associated toric variety is $ \PP^{n-1}$).
 
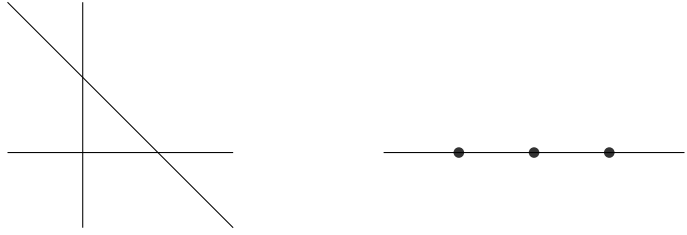
\begin{figure} 
\begin{tikzpicture}
	\draw (-1,0) -- (2,0);
	\draw (0,-1) -- (0,2);
	\draw (2,-1) -- (-1,2);
	\draw (4,0) -- (8,0);
	\fill [black,opacity=.8] (5,0) circle (2pt);
		\fill [black,opacity=.8] (6,0) circle (2pt);
			\fill [black,opacity=.8] (7,0) circle (2pt);
		
\end{tikzpicture}
\caption{The hyperplane arrangements for $ T^* \PP^3 $ and $ \widetilde{\C^2 / \Z_3}$.} \label{fig:hyp}
\end{figure}

A second example is 
\begin{gather} \label{eq:exampletori2}
	1 \rightarrow \{(t_1, \dots, t_n) : t_1 \cdots t_n = 1 \} \rightarrow (\Cx)^n \rightarrow \Cx \rightarrow 1 
	\end{gather}
and we choose $ \chi(t_1, \dots, t_n) = t_i^i $.  The resulting hypertoric variety is $ Y = \widetilde{\C^2 / \Z_n} $.  The hyperplane arrangement is the arrangment in $ \R $ given by the $n $ points $ 1, \dots, n  $.  Note that there are $ n-1 $ compact chambers: the intervals $ [1,2], \dots, [n-1,n] $, corresponding to the $ n-1 $ $\PP^1$s in the core.
\end{Example}

The short exact sequences of tori in (\ref{eq:exampletori1}) and (\ref{eq:exampletori2}) are dual to each other.  More generally, given 
\begin{equation*} 
	1 \rightarrow G \rightarrow (\Cx)^n \rightarrow T \rightarrow 1
\end{equation*}
we can dualize to
$$
1 \rightarrow T^\vee \rightarrow (\Cx)^n \rightarrow G^\vee \rightarrow 1
$$
where $T^\vee $ is the dual torus which satisfies $ \Hom(\Cx, T^\vee) = \Hom(T, \Cx) $.

The resulting pair $ Y, Y^\vee $ of hypertoric varieties are called \textbf{symplectic dual}.  This will be a special case of the general phenomenon that we will study in section \ref{se:SD}.  

\subsection{Quiver varieties} \label{se:quiver}

Another special case of the Hamiltonian reduction construction are Nakajima quiver varieties, where $ G, N $ come from the combinatorial data of a framed quiver.  These were first introduced by Nakajima \cite{Nak94}.

We fix a finite directed graph $Q = (I,E)$ (loops and multiple edges are allowed), with head and tail maps $ h,t : E \rightarrow I $.  Also, we fix two dimension vectors $ \Bv, \Bw \in \N^I $.  For $ i \in I$, let $ V_i = \C^{v_i}, W_i = \C^{w_i}$ and consider the space of  representations of the quiver $ Q $ on the vector space $ \oplus V_i$ framed by $ \oplus W_i$.
$$
N = \bigoplus_{e \in E} \Hom(V_{t(e)}, V_{h(e)}) \oplus \bigoplus_{i \in I} \Hom(V_i, W_i)
$$
This big vector space $ N $ has a natural action of $ G = \prod_i GL(V_i) $.  We form the cotangent bundle $T^* N $ and the take the Hamiltonian reduction by the action of $ G $ as in section \ref{se:Higgs}.  The resulting space $ Y = \Phi^{-1}(0) \sslash_\chi G $ is called a Nakajima quiver variety.  Here we choose $ \chi : G \rightarrow \Cx $ to be given by the product of the determinants.

On $ Y $, we have a Hamiltonian action of $ T = \prod_i (\Cx)^{w_i} $ inherited from its action on $ \oplus W_i $.  (In other words, we take $ \tG = G \times T $.)

The space $ Y $ is always smooth \cite[Cor 3.12]{Nak98}.  However $ \pi : Y \rightarrow X $ is not always birational.  Also, the Hamiltonian torus action does not always have finitely many fixed points.

\begin{Example} \label{eg:Anquiver}
	The following example is key.  Consider a linearly oriented type $ A_{n-1} $ quiver with $ \Bv = (1, \dots, n-1) $ and $ \Bw = (0, \dots, 0, n) $.
	\begin{gather*}
		\begin{tikzpicture}[scale=0.6]
			\draw  (0,0) circle [radius=1];
			\draw (4,0) circle[radius=1];
			\draw (12,0) circle [radius=1];
			\draw [->] (1.2,0) -- (2.8,0);
			\draw [->] (5.2,0) -- (6.8,0);
			\draw [->] (9.2,0) -- (10.8,0);
			\draw [->] (13.2,0) -- (14.8,0);
			\node at (0,0) {$V_1$};
			\node at (4,0) {$V_2$};
			\node at (12,0) {$V_{n-1}$};
			\node at (8,0) {$\cdots$};
			\draw (15,-1) rectangle (17,1);
			\node at (16,0) {$\C^n$};
		\end{tikzpicture} \\
	 N = \Hom(\C, \C^2) \oplus \cdots \oplus \Hom(\C^{n-1}, \C^n), \quad G = GL_1 \times \cdots \times GL_{n-1} 
	\end{gather*}
	The resulting quiver variety is $ Y = T^* Fl_n $, with $ X = \mathcal N_{\mathfrak{sl}_n}$.  
\end{Example}

\begin{Example} \label{se:instantons}
	Another important example is a quiver with one vertex and one self loop with $ V = \C^n $ and $ W = \C^r $.
\begin{gather*}			
	\begin{tikzpicture}[scale=0.5]
		\draw  (0,0) circle [radius=1];
		\draw [->] (1.2,0) -- (2.8,0);
		\draw [->] (-0.9,-0.7) .. controls (-3,-1.8) and (-3,1.8) .. (-0.9,0.7);
		\node at (0,0) {$\C^n$};
		\draw (3,-1) rectangle (5,1);
		\node at (4,0) {$\C^r$};
	\end{tikzpicture}\\
	N = \End(\C^n) \oplus \Hom(\C^n, \C^r),  \quad G = GL_n
	\end{gather*}
	
	In this case, $ Y = Y(r,n)$ is the moduli space of rank $ r $, torsion-free sheaves on $ \mathbb P^2 $, framed at $ \infty $ with second Chern class $ n$ (see \cite{Nakbook}).  In particular, $Y(1,n)$ is the Hilbert scheme of $n $ points in $ \C^2$.  ($Y(r,n) $ is also known as the Uhlenbeck compactification of the moduli space of $ SU(r) $ instantons on $ \C^2$.)
\end{Example}
	
\subsection{Affine Grassmannian slices} There is another important class of symplectic resolutions consisting of resolutions of slices in the affine Grassmannian, but we will postpone their discussion until section \ref{se:AGslices}.

\section{Topology of symplectic resolutions} \label{se:top}

\subsection{Symplectic leaves and the Springer sheaf} \label{se:leaves}
We will now study the topology of the map $ \pi $.

\begin{Theorem}
	Let $\pi : Y \rightarrow X $ be a symplectic resolution.
	\begin{enumerate}
		\item $ X $ has a finite partition $ X = \cup_{j \in J} X_j $ where each $ X_j $ is locally closed, smooth, and symplectic.  (These are the \textbf{symplectic leaves} of $ X $.)
		\item The map $ \pi $ is semismall.
		\item The pushforward $ \pi_* \C_Y $ is constructible with respect to the stratification by the symplectic leaves.
	\end{enumerate}
\end{Theorem}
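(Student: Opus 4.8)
The plan is to deduce all three statements from the existence of a conical $\Cx$-action together with the symplectic structure, following Kaledin's work on symplectic singularities. For part (1), I would argue that $X$ has symplectic singularities: indeed $Y$ is a symplectic resolution, so by definition the symplectic form $\omega$ on the smooth locus of $X$ extends to a (closed, algebraic) $2$-form on $Y$. Kaledin's theorem then says that such an $X$ admits a stratification by finitely many locally closed smooth symplectic subvarieties $X_j$, obtained inductively: $X_0$ is the smooth locus $X^{\mathrm{sm}}$, and the singular locus $X \setminus X^{\mathrm{sm}}$ is again (a finite union of) varieties with symplectic singularities, on which one repeats the construction. Finiteness and the fact that each stratum is smooth and carries a nondegenerate Poisson (hence symplectic) structure come out of this induction; I would just cite \cite{Ka} for the precise statement rather than reproving it.

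For part (2), semismallness, the key input is that $\pi$ is birational and Poisson, combined with a dimension count on the strata. Write $X = \bigcup_j X_j$ and let $d_j = \dim X_j$, $\dim X = \dim Y = 2n$. The standard criterion for a projective map to be semismall is that for each stratum $X_j$ one has $\dim X_j + 2\dim(\pi^{-1}(x)) \le 2n$ for $x \in X_j$, equivalently $\codim X_j \ge 2 \dim(\pi^{-1}(x))$. The argument is that $Y$ is symplectic, so $\pi^{-1}(X_j)$ is coisotropic (its generic points are smooth, and the $2$-form $\omega$ pulled back from $Y$, restricted there, has kernel of dimension equal to $\codim$), hence a generic fibre of $\pi$ over $X_j$, being (roughly) the fibre of the null-foliation, is isotropic in $Y$ and so has dimension at most $\frac12 \codim_Y \pi^{-1}(X_j) = \frac12 \codim_X X_j$. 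This gives exactly the semismallness inequality. I would make this precise using that $\pi$ is birational (so $\pi^{-1}(X^{\mathrm{sm}})$ is open dense and the generic fibre is a point) and that the Poisson structure on $X$ restricts to the symplectic form on each $X_j$, so the symplectic form on $Y$ and the leafwise symplectic forms on $X$ are compatible under $\pi$.

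Part (3) is then essentially a formal consequence. Since $\pi$ is projective, $\pi_* \C_Y$ (more precisely $R\pi_* \underline{\C}_Y[\dim Y]$) is a well-defined object of the constructible derived category, and by the decomposition theorem it is a direct sum of shifted semisimple perverse sheaves. Semismallness (part 2) forces all these summands to be concentrated in a single perverse degree, i.e. $\pi_* \C_Y[\dim Y]$ is a perverse sheaf; its summands are $\IC$-sheaves of local systems on the various symplectic leaves $X_j$. In particular $\pi_* \C_Y$ is constructible with respect to the stratification by the $X_j$. Alternatively, and more elementarily, one checks directly that $\pi$ is stratified: over each leaf $X_j$ the map $\pi$ is a topologically locally trivial fibration (this uses Whitney-type stratifiability of $\pi$, which holds because $\pi$ is algebraic and proper), which immediately gives constructibility of $\pi_* \C_Y$ with respect to $\{X_j\}$.

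The main obstacle is part (2): one must carefully justify the coisotropy of $\pi^{-1}(X_j)$ and the isotropy of the generic fibre, i.e. that the symplectic geometry genuinely controls the fibre dimensions over each leaf. The cleanest route is to invoke Kaledin's results directly, which package exactly this: he shows that a symplectic resolution is automatically semismall and that the null-foliation on $\pi^{-1}(X_j)$ has leaves of dimension $\frac12 \codim X_j$. Everything else (finiteness of strata, constructibility) is bookkeeping once this is in hand.
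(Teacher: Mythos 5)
Your treatment of parts (1) and (2) matches the paper's: both parts are simply attributed to Kaledin \cite{Ka}, and your sketch of the coisotropic/isotropic dimension count is the standard heuristic behind his semismallness result, so there is nothing to object to there.

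Part (3), however, contains a genuine gap, and it is exactly the one the paper warns about. You claim that constructibility with respect to the symplectic leaves is ``essentially a formal consequence'' of semismallness plus the decomposition theorem. It is not. The decomposition theorem tells you that $\pi_*\C_Y[\dim Y]$ is a direct sum of $\IC$-sheaves of local systems supported on closures of \emph{some} smooth locally closed subvarieties of $X$; nothing in that statement forces those supports to be closures of symplectic leaves. Likewise, your ``more elementary'' alternative --- that $\pi$ is a topologically locally trivial fibration over each leaf $X_j$ --- is precisely the assertion that needs proof. Thom--Mather theory guarantees local triviality over the strata of \emph{some} Whitney stratification adapted to $\pi$, but gives no reason for that stratification to coincide with (or be refined by) the decomposition into symplectic leaves. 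The paper makes this point explicitly: ``One might imagine that the third statement is proven by showing that $\pi$ is smooth over each $X_j$. However, the proof of McGerty--Nevins is more indirect and uses nearby cycles.'' So part (3) is a nontrivial theorem of McGerty--Nevins \cite[Cor 4.19]{MN}, proved by a nearby-cycles argument, and cannot be dispatched as bookkeeping; your proposal as written would need to either cite that result or supply an argument identifying the supports of the decomposition-theorem summands with leaf closures.
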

The first and second parts are due to Kaledin \cite{Ka}.  The third statement is due to McGerty-Nevins \cite[Cor 4.19]{MN}.  One might imagine that the third statement is proven by showing that $ \pi $ is smooth over each $ X_j $.  However, the proof of McGerty-Nevins is more indirect and uses nearby cycles.

From the theorem, the \textbf{Springer sheaf} $ \pi_* \C_Y[2d] $ is perverse, where $ 2d = \dim Y$.  By the decomposition theorem, it is a direct sum of simple perverse sheaves associated to local systems on the strata $ X_j $.  To simplify the exposition in what follows, we will assume that only trivial local systems appear in this decomposition.  For ADE quiver varieties, this was proven by Nakajima \cite[Prop 15.32]{NaQA}, while for hypertoric varieties it was proven by Proudfoot-Webster \cite[Prop 5.2]{PW}.  (This assumption does not hold for $ T^* G/B $, if $ G $ is not $ SL_n $.)

Fix a point $ x_j \in X_j $ in each stratum and let $ F_j = \pi^{-1}(\{x_j\}) $.  Under our assumption from Remark \ref{re:nodeg1}, $ X_0 = \{0 \} $ is a symplectic leaf and $ F_0 = \pi^{-1}(\{0\}) $ is the core.

From the semismallness of $ \pi $, we know that $ \dim F_j \le \frac{1}{2} \codim X_j $.  We will assume that we actually have equality, since this often holds in examples (if there are strata where the fibre has smaller dimension, then these will not contribute to the decomposition below).  Thus from the Beilinson-Bernstein-Deligne decomposition theorem, we obtain
  \begin{equation} \label{eq:BBD}
  \pi_* \C_Y[2d] = \bigoplus_{j \in J} \IC_{X_j} \otimes H(F_j)
  \end{equation}
  where for any algebraic variety $ X $, $ H(X) := H_{2 \dim X}^{BM}(X)$ is the top Borel-Moore homology, and $ \IC_X $ is the intersection cohomology sheaf.

 \begin{Example} \label{eg:strataSpringer}
 	Consider $ T^* Fl_n \rightarrow \cN $.  The symplectic leaves of $ \cN $ are the nilpotent orbits $ X_\tau $, where are indexed by the partitions of $ n $. The fibre $ F_\tau $ is called a Springer fibre.  Its irreducible components are classified by standard Young tableaux of shape $ \tau$.     
 \end{Example}

\begin{Remark}
	Suppose that $ G $ is a semisimple group, and $ Y = T^* G/B $ is the cotangent bundle of its flag variety.  Springer theory shows that the sheaf $ \pi_* \C_Y[2d] $ carries an action of the Weyl group $ W $.  McGerty-Nevins \cite{MN} generalized this construction to any conical symplectic resolution, by defining an action of the Namikawa Weyl group (see section \ref{se:deform}) on the Springer sheaf.
\end{Remark}

\subsection{Hyperbolic decomposition}
  Assume that $ Y \rightarrow X $ carries a Hamiltonian $T$-action with finitely many fixed points.  Choose a generic $ \Cx \subset T $ yielding an action of $ \Cx $ on $ Y $ with the same fixed points as $ T $.
  
  This choice of $ \Cx $ allows us to define attracting sets
  $$Y^+ := \{ y \in Y : \lim_{s \to 0} s \cdot y \in Y^T \}, \quad X^+ := \{ x \in X: \lim_{s \to 0} s \cdot x \in X^T \} $$
  By equivariance, we have $ Y^+ \rightarrow X^+ $.  We define $ X_j^+ = X_j \cap X^+$.  Because the action is Hamiltonian and $ Y $ is smooth, we have $ \dim Y^+ = d $.  This implies $ \dim X^+ \le d $.  (It should be possible to prove that $ \dim X^+ = d $ using that the deformation $ \tX $ (see section \ref{se:deform}) is $ \Cx$-equivariant and that the general fibre is smooth.  However, we could not find an argument like this in the literature.)

 Let $ i_X : X^+ \rightarrow X $ be the inclusion.  We have the hyperbolic stalk functor
  \begin{align*}
  \Phi : D_c(X) &\rightarrow D(\Vect) \quad
  \cF \mapsto H^\bullet(X^+, i_X^! \cF)
  \end{align*}
 where $ D_c(X) $ denotes the derived category of sheaves on $ X $ constructible with respect to the stratification $ \{ X_j \} $.
 
 We assume that each $ \overline{X_j} $ admits a symplectic resolution such that the Hamiltonian $ T $ action extends to this resolution (this can often be checked in examples).  This implies that $ \dim X_j^+ \le \frac{1}{2}\dim X_j $ and so the inclusion of $ X_+ $ into $ X $ is hyperbolic semismall in the sense of \cite[Def 5.5.1]{NakPCMI}. (Alternatively, if we know that $ \dim F_j = \frac{1}{2}\codim X_j $ for all $j$, we can also deduce this inequality.)  Thus Theorem 5.5.3 of \cite{NakPCMI} applies and we have the following.  
  
  \begin{Theorem} \label{th:hypess}
	For any perverse sheaf $ \cF$, $ \Phi(\cF) $ is concentrated in degree 0.  In particular, $ \Phi(\IC_{X_j}) = H(\overline{X_j^+}) $.
  \end{Theorem}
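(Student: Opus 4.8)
The plan is to deduce the result from Braden's hyperbolic localization theorem together with the hyperbolic semismallness recorded above; granting those, the statement is essentially Theorem 5.5.3 of \cite{NakPCMI}, and one can reconstruct the argument as follows. First note that $X^T$ is finite: since $\pi$ is projective, surjective and $T$-equivariant, every point of $X^T$ has a nonempty $T$-stable projective fibre, which meets $Y^T$ by Borel's fixed-point theorem, so $X^T=\pi(Y^T)$. Since $\cF$ is constructible for the stratification by the (necessarily $T$-stable) symplectic leaves, it is $\Cx$-monodromic, so Braden's theorem applies: writing $f^{\pm}\colon X^{\pm}\hookrightarrow X$ and $g^{\pm}\colon X^{\pm}\to X^T$ for the inclusions of the attracting and repelling sets and the limit maps, there is a canonical isomorphism $g^{+}_{*}(f^{+})^{!}\cF\cong g^{-}_{!}(f^{-})^{*}\cF$; applying $R\Gamma$, which is exact on the finite set $X^T$, gives
\[
\Phi(\cF)=R\Gamma\bigl(X^{+},i_X^{!}\cF\bigr)\;\cong\;R\Gamma_{c}\bigl(X^{-},(f^{-})^{*}\cF\bigr).
\]
It is the right-hand expression that bounds the cohomological degrees from above, and duality that bounds them from below.

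Next I would prove the two-sided degree bound. For $\Phi(\cF)\in D^{\le 0}$: stratify $X^{-}=\bigsqcup_{j}X_j^{-}$ with $X_j^{-}:=X_j\cap X^{-}$; then $(f^{-})^{*}\cF$ restricted to $X_j^{-}$ carries the stalks of $\cF$ along the leaf $X_j$, so it lies in degrees $\le-\dim X_j$ by the support condition for perverse sheaves. Applying the hypothesis that $\overline{X_j}$ admits a symplectic resolution to which the $T$-action extends --- now for the \emph{opposite} $\Cx$-action --- gives $\dim X_j^{-}\le\frac{1}{2}\dim X_j$, so $R\Gamma_{c}(X_j^{-},-)$ raises degrees by at most $2\dim X_j^{-}\le\dim X_j$; hence each stratum, and therefore $\Phi(\cF)$, lies in $D^{\le 0}$. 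For $\Phi(\cF)\in D^{\ge 0}$: the Verdier dual $\mathbb{D}\cF$ is again perverse, and $\mathbb{D}\,\Phi(\cF)\cong\Phi'(\mathbb{D}\cF)$, where $\Phi'$ is the hyperbolic stalk functor for the opposite $\Cx$-action (which satisfies the same hypotheses), so $\mathbb{D}\,\Phi(\cF)\in D^{\le 0}$ by the previous case, i.e.\ $\Phi(\cF)\in D^{\ge 0}$. Therefore $\Phi(\cF)$ is concentrated in degree $0$. (One could also reduce to the $\IC_{X_j}$ at the outset, as $D^{\le 0}$ and $D^{\ge 0}$ are stable under extensions and, under our standing assumption on local systems, every perverse sheaf on $X$ is an iterated extension of the $\IC_{X_j}$.)

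It remains to identify the single group $\Phi(\IC_{X_j})$ with $H(\overline{X_j^{+}})$. Here I would use a symplectic resolution $\rho\colon\widetilde{X}_j\to\overline{X_j}$ carrying the $T$-action, together with the fact that hyperbolic localization commutes with the proper pushforward $\rho_{*}$. Since $\widetilde{X}_j$ is smooth, a standard computation with dualizing complexes gives $\Phi\bigl(\rho_{*}\C_{\widetilde{X}_j}[\dim X_j]\bigr)\cong R\Gamma\bigl(\widetilde{X}_j^{+},\omega_{\widetilde{X}_j^{+}}\bigr)[-\dim X_j]$; and because $\widetilde{X}_j$ is symplectic with a Hamiltonian $\Cx$-action and finitely many $T$-fixed points, $\widetilde{X}_j^{+}$ is paved by affine spaces of complex dimension $\frac{1}{2}\dim X_j$, so this complex is concentrated in degree $0$, where it equals $H(\widetilde{X}_j^{+})$ --- the span of the top-dimensional components of $\widetilde{X}_j^{+}$. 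On the other hand $\rho$, being a symplectic resolution, is semismall by \cite{Ka}, so $\rho_{*}\C_{\widetilde{X}_j}[\dim X_j]=\bigoplus_{k}\IC_{X_k}\otimes H(\rho^{-1}(x_k))$ as in \eqref{eq:BBD}; applying $\Phi$ and taking degree-$0$ cohomology yields $H(\widetilde{X}_j^{+})=\bigoplus_{k}\Phi(\IC_{X_k})\otimes H(\rho^{-1}(x_k))$. Inducting on the closure order of strata --- with base case $X_j=\{0\}$, where $\Phi(\C_0)=\C=H(\{0\})$ --- and matching this against the decomposition of $\widetilde{X}_j^{+}$ by which symplectic leaf the image of each cell meets (the cells dominating a component of $\overline{X_j^{+}}$ accounting for exactly $H(\overline{X_j^{+}})$, the remaining cells fibring over lower strata with fibres the proper, hence attracting, fibres $\rho^{-1}(x_k)$), one peels off $\Phi(\IC_{X_j})=H(\overline{X_j^{+}})$.

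The step I expect to be the genuine obstacle is the dimension estimate $\dim X_j^{\pm}\le\frac{1}{2}\dim X_j$ underlying hyperbolic semismallness: this is precisely why one must assume each $\overline{X_j}$ admits a symplectic resolution with the $T$-action extended, and without some such input the hyperbolic stalk need not be concentrated in a single degree. The other, more technical, point --- the bookkeeping in the last paragraph, identifying which Bia\l ynicki-Birula cells of $\widetilde{X}_j^{+}$ dominate which $\overline{X_k^{+}}$ --- is carried out in \cite[\S5.5]{NakPCMI} building on Braden's theorem, and I would cite it rather than reprove it.
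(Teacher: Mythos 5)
Your proposal is correct and follows the same route as the paper, which simply verifies hyperbolic semismallness of $X^+\hookrightarrow X$ (via the assumed $T$-equivariant symplectic resolutions of the $\overline{X_j}$) and then invokes Theorem 5.5.3 of \cite{NakPCMI}. Your reconstruction of the internals of that cited theorem --- Braden's isomorphism $g^{+}_{*}(f^{+})^{!}\cong g^{-}_{!}(f^{-})^{*}$ over the finite set $X^T$, the two-sided degree bound from the support condition plus $\dim X_j^{\pm}\le\tfrac12\dim X_j$ plus Verdier duality, and the identification of $\Phi(\IC_{X_j})$ by pushing forward from a semismall resolution --- is accurate and is essentially Nakajima's argument, so there is nothing to correct.
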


  Considering the diagram
\begin{equation*}
	\begin{tikzcd}
		Y^+ \arrow{r}{i_Y} \arrow{d}{\pi^+} & Y \arrow{d}{\pi} \\
		X^+ \arrow{r}{i_X} & X 
	\end{tikzcd}
\end{equation*}
we see that 
\begin{gather*} \Phi(\pi_* \C_Y[2d]) = H^\bullet(X^+, i_X^! \pi_* \omega_Y[-2d]) \\
	= H^\bullet(X^+, \pi^+_* i_Y^! \omega_Y[-2d]) = H^\bullet(Y^+,\omega_{Y^+}[-2d])
\end{gather*}
From Theorem \ref{th:hypess}, we conclude that $ H^\bullet(Y^+,\omega_{Y^+}[-2d]) $ is concentrated in degree 0, and thus equals  $ H(Y^+)$.

Thus applying the functor $ \Phi $ to the decomposition (\ref{eq:BBD}) yields the following, which we call the \textbf{hyperbolic BBD decomposition}
\begin{equation} \label{eq:fund}
	H(Y^+) = \bigoplus_{j \in J} H(\overline{X_j^+}) \otimes H(F_j) = H(F_0) \oplus \bigoplus_{j \ne 0, top}  H(\overline{X_j^+}) \otimes H(F_j) \oplus H(X^+) 
\end{equation}
  In this decomposition, we highlight the two extremal pieces, corresponding to the point stratum $ j = 0 $, which contributes $ H(F_0)$  and the open stratum $ j = top $ which contributes $ H(X^+) $.

Note that the total dimension of $ H(Y^+) $ is the same as the total dimension of $ H^\bullet(Y) $, as both equal the number of fixed points $ Y^T $.  So we view the hyperbolic BBD decomposition as a replacement for the usual decomposition of $ H^\bullet(Y)$.

\begin{Example} \label{eg:fund}
	Consider $ \widetilde{\C^2/\Z_n} \rightarrow \C^2/\Z_n$.  There are two strata $ X_0 = \{0\} $ and $ X_1 = X \setminus X_0 $.  We can see that $ F_1$ is a point and $ F_0 $ is a union of $ n-1$ projective lines. 
	
	We have $ X^+ = \C \subset \C^2 / \Z_n $, the image of the $ x$-axis and then $ Y^+ = F_0 \cup X^+ $.  
	
	So the hyperbolic BBD decomposition gives
	$$
	H(Y^+) = H(\overline{X_0^+}) \otimes H(F_0) \oplus H(\overline{X_1^+}) \otimes H(F_1)
	$$
	which has dimensions $
	n = 1 \cdot (n-1) + 1 \cdot 1  $.
	
	On the other hand, consider $ T^* \PP^{n-1} \rightarrow \{ A^2 = 0, \rk A \le 1 \} $.  In this case, we have two strata $ X_0 = \{0\} $ and $ X_1 = X \setminus X_0 $.  We have $F_1 = \{pt\}$ and $F_0 = \PP^{n-1} $. 
	
	For the attracting sets, we see $ X^+ = X \cap \mathfrak n $.  This space has $ n-1 $ irreducible components (see \cite[Example 3.3.15]{CG}).

		So the hyperbolic BBD decomposition gives
	$$
	H(Y^+) = H(\overline{X_0^+}) \otimes H(F_0) \oplus H(\overline{X_1^+}) \otimes H(F_1)
	$$
	which has dimensions $ n = 1 \cdot 1 + (n-1) \cdot 1  $.
	
\end{Example}

\begin{Example}
	Continuing from Example \ref{eg:strataSpringer}, we consider $ X = \cN_{\gl_n} $ and $ Y = T^* Fl_n$.  Then $ X^+ = \mathfrak n $, the set of strictly upper-triangular matrices, and the irreducible components of $ \overline{X^+_\tau} $ are known as orbital varieties (here, as in Example \ref{eg:strataSpringer}, $\tau$ ranges over the partitions of $ n$). There is a bijection between the irreducible components of $ \overline{X^+_\tau} $ and the set of standard Young tableaux of shape $ \tau $. 
	
	On the other hand, $ \dim H(Y^+) = n!$, since $ Y^+ $ is the union of the conormal bundles to the Schubert cells in $ Fl_n$.  Thus the numerics of the hyperbolic BBD decomposition yields
$$
n! = \sum_{\tau \vdash n} t_\tau^2
$$
where $t_\tau $ is the number of standard Young tableaux of shape $ \tau$.  A combinatorial proof of this numerical identity is given by the Robinson-Schensted correspondence. In fact, this bijection is encoded in the combinatorics of the components of $ Y $ (see \cite[Remark 3.3.22]{CG}).
\end{Example}

\subsection{Transversal slices to symplectic leaves}
Consider a symplectic leaf $ X_j $ with basepoint $ x_j $.  A \textbf{transversal slice} to $ X_j $ is another conical symplectic singularity $ V_j $ such that we have a Poisson isomorphism of formal completions
\begin{equation} \label{eq:formalproduct}
\widehat{X}_{/x_j} \cong \widehat{V_j}_{/0} \hat{\times} \widehat{X_j}_{/x_j}
\end{equation}
Kaledin \cite{Ka} proved that such transversal slices always exists, except without the condition of being conical (see also \cite{Sch}).  In many examples, there exist natural constructions of these slices and they are all conical.

If we further assume that $ V_j $ admits a symplectic resolution $ \widetilde V_j$, then it is natural to expect that (\ref{eq:formalproduct}) lifts to the resolution and in particular that the core of $ \widetilde V_j \rightarrow V_j $ is isomorphic to $F_j $.

\subsection{Quantum cohomology} \label{se:QC}
Another object of intense study is the equivariant quantum cohomology of symplectic resolutions.  This was initiated in the work of Braverman-Maulik-Okounkov \cite{BMO}.

Let $QH^\bullet_{T\times\Cx}(Y)$ be the equivariant quantum cohomology ring of $Y$. The underlying graded vector space of $QH^\bullet_{T\times\Cx}(Y)$ is equal to the tensor product of $H^\bullet_{T\times\Cx}(Y)$ with the completion of the semigroup
ring of effective curve classes in $H_2(Y, \Z)$.  Let $\star$ denote the quantum product (shifted by the canonical theta characteristic)
and let $\hbar\in H^\bullet_{T\times\Cx}(pt)$ be the weight of the symplectic form.

In \cite[Section 2.3.4]{okounkov2015enumerative}, Okounkov conjectured that there exists a finite set $\Delta_+\subset H_2(Y, \Z)$,
and an element $L_\alpha\in H^{2d}(Y \times_X Y)$ for each $\alpha\in\Delta_+$, which control quantum multiplication by divisors.  Namely, for all $u\in H^2_{T \times \C^{\times}}(Y)$, 
$$u\star \cdot = 
u \cup \cdot + \hbar \sum_{\alpha\in\Delta_+} \langle \alpha, u\rangle \frac{q^\alpha}{1-q^\alpha} L_\alpha(\cdot),$$
where $L_\alpha$ acts via convolution. $\Delta_+$ is called the set of {\bf positive K\"ahler roots}, and $\Delta := \Delta_+ \cup -\Delta_+$ is called the set of {\bf K\"ahler roots}.

We use the quantum multiplication by divisors to define the \textbf{quantum connection}.  We define the \textbf{K\"ahler torus} $ K := H^2(Y, \C^\times)$ and consider the regular locus $ \Kreg := K \setminus \cup_{\alpha \in \Delta_+} \{ q^\alpha = 1 \} $.  Then we define a connection on the trivial vector bundle over $ \Kreg $ with fibre $ H^\bullet_{T \times \Cx}(Y) $ by letting $ u\in H^2(Y) = \mathfrak k$
act by the operator $ \partial_{u} + u \star $.

It is very interesting to try to identify the resulting connection.  Here are some results.

\begin{Theorem}
	\begin{enumerate}
		\item If $ Y = T^* G/B $, then the quantum connection is the affine Knizhnik-Zamolodchikov connection \cite{BMO}.
		\item If $ Y $ is a hypertoric variety, then the quantum connection is a Gelfand-Kapranov-Zelevinsky system \cite{McBreenShenfeld}.
		\item If $ Y $ is a quiver variety, then the quantum connection is the trigonometric Casimir connection \cite{MO}.
		\item If $ Y $ is the resolution of an affine Grassmannian slice, then the quantum connection is the trigonometric KZ connection \cite{Danilenko}.
	\end{enumerate}
\end{Theorem}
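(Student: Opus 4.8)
The plan is to treat all four statements through the single template for computing quantum multiplication by divisors on a symplectic resolution introduced by Braverman--Maulik--Okounkov \cite{BMO} and developed by Maulik--Okounkov \cite{MO} and Okounkov \cite{okounkov2015enumerative}; strictly speaking these are four independent theorems, and the ``proof'' is a synthesis of those works along a common axis. Concretely: (i) embed $Y$ into a family of symplectic resolutions carrying a rich torus action and its associated \emph{stable envelope} formalism; (ii) use the geometry of quasimaps $\PP^1 \dashrightarrow Y$ — equivalently the relative Gromov--Witten vertex — to extract the purely quantum correction; (iii) recognize the operators $L_\alpha$ as the action of explicit ``root subalgebras'' inside the quantum-group-type algebra generated by the geometric R-matrices, which simultaneously pins down the finite set $\Delta_+$ of K\"ahler roots; (iv) match generators and relations to identify the resulting flat connection with a known integrable system. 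Steps (i)--(iii) are uniform; step (iv) is where the four cases diverge and where the representation theory enters.

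For parts (1) and (3) the relevant algebra is the Maulik--Okounkov Yangian $Y_\hbar$ attached to the quiver $Q$ (for $Y = T^*G/B$ with $G = SL_n$ this is the linearly oriented $A_{n-1}$ quiver of Example \ref{eg:Anquiver}). The computation of \cite{MO} shows that for $u \in H^2$, the operator $u\star\cdot$ equals cup product plus $\hbar\sum_{\alpha\in\Delta_+}\langle\alpha,u\rangle\frac{q^\alpha}{1-q^\alpha}e_\alpha f_\alpha$, where $\alpha$ runs over the positive roots of $\mathfrak{g}_Q$ and $e_\alpha,f_\alpha$ act through the Yangian; this is exactly the definition of the \textbf{trigonometric Casimir connection} of Toledano Laredo. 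Specializing to $Y = T^*G/B$ one observes that $H^\bullet_T(T^*G/B)$ with this action is the polynomial representation of the degenerate affine Hecke algebra, under which the Casimir connection becomes Cherednik's \textbf{affine KZ connection}, recovering \cite{BMO}. The main subtlety here is bookkeeping: matching the shift by the theta characteristic, the normalization of $\hbar$, and the identification of the semigroup of effective curve classes with the coroot lattice so that $q^\alpha$ in the geometry is the $q^\alpha$ in the Lie theory.

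For part (2), localization is available since $Y^T$ is finite and $H^\bullet_T(Y)$ is the combinatorial cohomology ring of the hyperplane arrangement of section \ref{se:hypertoric}. Following McBreen--Shenfeld \cite{McBreenShenfeld}, one writes the vertex (or $J$-function) of $Y$ as an explicit sum over lattice points of the cone cut out by $\chi$, resums it, and checks directly that it is annihilated by the box operators attached to the integer relations (circuits) among the columns of $\phi$ — i.e.\ that it solves the \textbf{GKZ hypergeometric system} whose matrix is the Gale dual of $\phi$, with the $T$-equivariant parameters entering as the GKZ exponents; the K\"ahler roots are the circuits of the arrangement. The only real work is the resummation and this verification.

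For part (4), Danilenko's argument \cite{Danilenko} exploits that the resolved affine Grassmannian slice $\widetilde{\oW}{}^\lambda_\mu$ is a Coulomb branch (section \ref{se:CB}). There are two routes: a direct quasimap computation adapted to the BFN presentation of $\C[X]$, or — more conceptually — transport of the answer of part (3) through symplectic duality, since $\widetilde{\oW}{}^\lambda_\mu$ is symplectic dual to an $ADE$ quiver variety and stable envelopes, hence quantum connections, are interchanged by the 3d-mirror/elliptic stable-envelope duality of \cite{okounkov2015enumerative}. Under this exchange equivariant and K\"ahler parameters swap, and the \textbf{trigonometric Casimir} connection of the dual quiver variety becomes the \textbf{trigonometric KZ} connection on the slice, matching the Lie-theoretic Casimir/KZ duality. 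I expect the hardest step to be precisely this one: making the duality of quantum connections a genuine statement rather than a numerical coincidence of solution counts, controlling all normalizations on both sides at once, and confirming that the geometric set of K\"ahler roots of the slice is the expected set of roots of $\mathfrak{g}$. A fully self-contained proof must instead carry out the direct Coulomb-branch quasimap computation, which is where \cite{Danilenko} does the real analytic work.
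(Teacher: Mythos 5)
First, note that the paper does not actually prove this theorem: it is a survey statement whose ``proof'' is the four citations, plus the single remark that the arguments use the universal deformation $\tY$ and the Maulik--Okounkov stable basis. So there is no in-paper argument to match; I am comparing your outline against the cited literature. Your uniform steps (i)--(iii) are a fair description of the Maulik--Okounkov template and are consistent with the paper's remark, your account of parts (2) and (3) is essentially an accurate reconstruction, and in part (4) you correctly flag that the mirror-symmetry route is heuristic --- indeed, under 3d mirror symmetry the quantum connection of $Y$ in the K\"ahler variables is expected to match the quantum \emph{difference}/shift-operator connection of $Y^!$ in the equivariant variables, not its quantum connection, so transport through duality as you state it is not a proof; Danilenko's argument \cite{Danilenko} is a direct computation.

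The genuine gap is in part (1). You propose to obtain the affine KZ connection for $T^*G/B$ by specializing the quiver-variety/Yangian computation, but $T^*G/B$ is a Nakajima quiver variety only for $G = SL_n$ (Example \ref{eg:Anquiver}); outside type $A$ it is not of ADE quiver type (the paper even notes that the trivial-local-system assumption fails there), so case (3) simply does not apply. Braverman--Maulik--Okounkov's theorem holds for arbitrary semisimple $G$, and their proof is logically prior to and independent of \cite{MO}: it uses the universal Poisson deformation $\widetilde{\fg} \to \ft$ (Grothendieck's simultaneous resolution) to localize the purely quantum correction to the root hyperplanes, where the fibre degenerates along a $T^*\PP^1$-bundle over a partial flag variety; this identifies each $L_\alpha$ with an explicit Steinberg-type correspondence and the resulting operators with the graded affine Hecke algebra acting on $H^\bullet_{T\times\Cx}(T^*G/B)$, i.e.\ with Cherednik's affine KZ connection. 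Even in type $A$, the passage between the trigonometric Casimir connection of $\ssl_n$ and the affine KZ connection is Toledano Laredo's KZ--Casimir duality (via Howe duality), which exchanges the equivariant and K\"ahler variables of two different varieties --- it is not a specialization of one connection to the other on a fixed space. Part (1) therefore needs its own argument, which is exactly the deformation-to-rank-one argument the paper's one-line remark points to.
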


In order to prove these results, the authors use the deformation $ \tY$ described below, and also the notion of \textbf{stable basis}, which was developed by Maulik-Okounkov \cite{MO}.

These connections each have an interesting monodromy which gives a representation of $ \pi_1(\Kreg) $ on $H^\bullet_{T \times \Cx}(Y) $.  Bezrukavnikov-Okounkov \cite[Conjecture 1]{BezReal} have conjectured that this monodromy representation can be categorified to an action of $ \pi_1(\Kreg) $ on $ D^b Coh^{T \times \Cx}(Y)$.

\section{Deformations and Quantizations} \label{se:alg}

\subsection{Deformations} \label{se:deform}
Namikawa \cite{Nam08} studied deformations of conical symplectic resolutions, building on previous results of Kaledin and coauthors.

Let $ Y \rightarrow X $ be a conical symplectic resolution.  We will be interested in deformations of $ Y $ and $ X $ as Poisson varieties.
\begin{Theorem}  \label{th:tY}
	There is a universal Poisson deformation $ \tY \rightarrow H^2(Y) $, where $ Y$ is identified with the fibre $Y_0$.
\end{Theorem}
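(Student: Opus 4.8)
The plan is to build $\tY$ in two stages. First I would produce a \emph{formal} universal Poisson deformation over the completion of $H^2(Y) := H^2(Y,\C)$ at the origin, using the deformation theory of holomorphically symplectic manifolds; then I would use the conical $\Cx$-action to algebraize this formal family to an honest algebraic family over the affine space $H^2(Y)$, with $Y$ recovered as the fibre over $0$. This is in essence Namikawa's argument, and it rests on the unobstructedness theorem of Kaledin--Verbitsky.

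\textbf{Step 1: the Poisson deformation functor.} Let $PD_Y$ be the functor on local Artinian $\C$-algebras sending $R$ to the set of isomorphism classes of flat families $\mathcal Y_R \to \Spec R$ carrying a Poisson structure, together with an identification of the central fibre with $(Y,\{-,-\})$. Since $Y$ is smooth and the Poisson bracket is nondegenerate, a Poisson deformation is the same datum as a deformation of the pair $(Y,\omega)$ as a holomorphically symplectic manifold, and the period map $\mathcal Y_R \mapsto [\omega_R]$ identifies first-order deformations with $H^2_{dR}(Y) = H^2(Y,\C)$. The key vanishing input is $R\pi_*\cO_Y = \cO_X$: as $Y$ is symplectic, $K_Y \cong \cO_Y$, so $\pi$ is crepant and this follows from Grauert--Riemenschneider; since $X$ is affine it gives $H^i(Y,\cO_Y) = H^i(X,\cO_X) = 0$ for $i>0$. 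These vanishings make $PD_Y$ pro-representable, with tangent space exactly $H^2(Y,\C)$ and obstruction space inside $H^3(Y,\C)$.

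\textbf{Step 2: unobstructedness.} The crucial point is that $PD_Y$ is \emph{unobstructed}. Granting this, $PD_Y$ is pro-represented by the completed local ring $\widehat{\cO}_{H^2(Y),0}$ and carries a formal universal deformation $\widehat{\tY} \to \widehat{H^2(Y)}_0$. Unobstructedness is a Tian--Todorov-type theorem, proved in the holomorphically symplectic (and possibly non-compact) setting by Kaledin--Verbitsky; its heart is a formality, or $\partial\bar\partial$-type, statement for the differential graded Lie algebra governing the deformations, which in turn follows from the degeneration of a Hodge--de Rham type spectral sequence. Although $Y$ itself is not compact, the properness of $\pi$ together with the affineness of $X$ supply the finiteness and the degeneration needed. \textbf{This step is the main obstacle}; the rest of the argument is comparatively formal.

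\textbf{Step 3: algebraization and universality.} The conical $\Cx$ scales $\omega$ with weight $2$, so it is natural to let it act on $H^2(Y)$ linearly with weight $2$, a contracting action with unique fixed point $0$. One checks the construction of $\widehat{\tY}$ is $\Cx$-equivariant, so $\widehat{\tY}$ acquires a compatible $\Cx$-action lifting those on $Y$ and on the base. Now a formal family over a complete local ring that is equivariant for a contracting torus action is automatically the completion of an algebraic family of finite type over the corresponding affine space: this is the standard principle that a graded complete object is already graded finitely generated. Applying it produces the algebraic Poisson family $\tY \to H^2(Y)$ with $\tY_0 \cong Y$. Finally, the universal property follows by combining formal universality from Steps 1--2 with the same contraction argument: an arbitrary Poisson deformation can, after a base change, be put in $\Cx$-equivariant form, hence is classified by a unique morphism to $H^2(Y)$, and pulling back $\tY$ recovers it. (Taking global sections, $\tX := \Spec\Gamma(\tY,\cO_{\tY}) \to H^2(Y)$ is then the induced Poisson deformation of $X$; this is not needed for the present statement, but is the source of the Namikawa Weyl group action.)
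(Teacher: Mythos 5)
Your outline is correct and is essentially the argument of Namikawa (building on Kaledin--Verbitsky unobstructedness), which is exactly the source the survey cites for this theorem rather than giving a proof: formal universality of the Poisson deformation functor with tangent space $H^2(Y,\C)$ via the period map and the vanishing of $H^{i>0}(Y,\cO_Y)$, then algebraization and global universality via the contracting conical $\Cx$-action. You have also correctly isolated the genuinely hard input, namely the Tian--Todorov-type unobstructedness for the non-compact holomorphically symplectic $Y$.
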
 
At first glance, it looks mysterious that $H^2(Y) $ classifies Poisson deformations of  $Y$.  This is explained by the period map.  More precisely, all the fibres $Y_\theta$, for $  \theta \in H^2(Y) $, are symplectic with symplectic form $ \omega_\theta$. The fibration $ \tY \rightarrow H^2(Y)$ is  topologically trivial (see \cite[Lemma 6.4]{MN}). Thus, the cohomology of the fibres are identified and so we can consider $ [\omega_\theta] \in H^2(Y_\theta) = H^2(Y) $.  As a result of the construction of Namikawa, we have $ [\omega_\theta] = \theta $.

Similarly, we can study Poisson deformations of $ X$.  The base of the universal deformation is given by the quotient of $ H^2(Y) $ by a finite group $ W $, called the \textbf{Namikawa Weyl group}.  The following result is also due to Namikawa \cite{Nam08}.

\begin{Theorem}
	There is a universal Poisson deformation $ \tX \rightarrow H^2(Y)/W $ of $ X $.  Moreover these two universal deformations fit together into the following diagram
	\begin{equation} \label{eq:simult}
	\begin{tikzcd}
		\tY \arrow{r} \arrow{d} & H^2(Y) \arrow{d} \\
		\tX \arrow{r} & H^2(Y)/W 
	\end{tikzcd}
\end{equation}
\end{Theorem}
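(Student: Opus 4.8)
The plan is to build $\tX$ directly out of the universal deformation $\tY \to H^2(Y)$ of Theorem~\ref{th:tY}, and then to check universality. Write $p : \tY \to H^2(Y)$ for the structure map. Since $\pi$ is projective and $X$ is affine and normal, $X = \Spec \Gamma(Y,\cO_Y)$; the conical $\Cx$-action makes all the relevant graded pieces finite-dimensional, so the relative affinization $\overline{Y} := \Spec_{H^2(Y)} p_* \cO_{\tY}$ is a well-defined normal scheme, affine over $H^2(Y)$, and the canonical map $\tY \to \overline{Y}$ is projective and birational over $H^2(Y)$, restricting to $\pi$ over $0 \in H^2(Y)$. For the last point one uses a base-change statement $(p_*\cO_{\tY})\otimes \C_0 = \Gamma(Y,\cO_Y) = \C[X]$, which follows from flatness of $\tY/H^2(Y)$ together with $H^{>0}(Y,\cO_Y)=0$ (symplectic, hence rational, singularities). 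Thus $\overline{Y} \to H^2(Y)$ is a Poisson deformation of $X$, the Poisson bracket descending from $\tY$ to its affinization.

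The second step is to descend $\overline{Y}$ along the $W$-action on $H^2(Y)$. The key input is a Torelli-type uniqueness: the Namikawa Weyl group is assembled from the Weyl groups of the Kleinian transversal slices along the codimension-two symplectic leaves of $X$, glued using monodromy around those leaves, and it has the property that for generic $\theta$ the fibre $Y_\theta$ is a smooth affine symplectic variety with $Y_\theta \cong Y_{\theta'}$ as Poisson varieties if and only if $\theta' \in W\theta$. Affinizing gives isomorphisms $\overline{Y}_\theta \cong \overline{Y}_{w\theta}$ for all $w \in W$, and rigidity of these conical symplectic varieties (finiteness of their Poisson automorphisms compatible with the $\Cx$-action) upgrades this to a genuine $W$-equivariant structure on $\overline{Y} \to H^2(Y)$. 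One then sets $\tX := \overline{Y}/W$, affine over $H^2(Y)/W$, and the square \eqref{eq:simult} is then part of the construction. Checking that the central fibre of $\tX$ is again $X$, i.e.\ that $W$ acts trivially on $\overline{Y}_0 = X$, uses that $W$ acts linearly on $H^2(Y)$ fixing the origin and only moves in the base directions: on the graded $\Sym H^2(Y)^*$-algebra $\Gamma(\tY,\cO)$ one shows the $W$-invariants already generate after base change to $H^2(Y)$.

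The final and most substantial step is universality. One shows that the tangent space $PD^1_X$ to the functor of Poisson deformations of $X$ is isomorphic to $H^2(Y)$, exactly as in the smooth case: using that $X$ has symplectic (hence rational) singularities, that $R^{>0}\pi_*\cO_Y = 0$, and a comparison of the Poisson cohomology of $X$ with the de Rham cohomology of $Y$, one identifies $PD^1_X \cong H^2(Y,\C)$ and more generally controls the obstructions. The Kodaira--Spencer map of $\overline{Y} \to H^2(Y)$ is the pullback of that of $\tY \to H^2(Y)$, hence the identity on $H^2(Y)$; therefore $\overline{Y}$ is a \textbf{versal} Poisson deformation of $X$. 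Passing to the $W$-quotient, $\tX \to H^2(Y)/W$ is versal, and since $X$ carries no nontrivial infinitesimal Poisson automorphisms one promotes versality to universality, which simultaneously pins the base down as $H^2(Y)/W$. The main obstacle is precisely this cohomological computation $PD^1_X \cong H^2(Y)$ together with the Torelli-type uniqueness underlying the $W$-symmetry; granted these, everything else is a fairly formal manipulation of the already-constructed family $\tY$.
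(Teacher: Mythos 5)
The paper does not prove this theorem; it is quoted from Namikawa \cite{Nam08}, so your sketch has to be judged against his argument. Your first step (the relative affinization $\overline{Y}=\Spec_{H^2(Y)}p_*\cO_{\tY}$, flatness plus $H^{>0}(Y,\cO_Y)=0$ to identify the central fibre with $X$) is correct and is indeed how one passes from deformations of $Y$ to deformations of $X$. The fatal problem is in your universality step. The Kodaira--Spencer map of $\overline{Y}\to H^2(Y)$ at $0$ is \emph{not} ``the identity on $H^2(Y)$'': it is the composite of the Kodaira--Spencer map of $\tY$ (which lands in $PD^1(Y)$) with the natural map $PD^1(Y)\to PD^1(X)$ induced by affinization, and that map is degenerate whenever $W\neq 1$. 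Concretely, for $Y=T^*\PP^1\to X=\C^2/\Z_2$ the affinized family is $\{uv=w^2-t^2\}\to\C_t$, whose restriction to $\Spec\C[t]/(t^2)$ is the trivial deformation of $X$; the universal deformation of $X$ is $\{uv=w^2-s\}\to\C_s$ and the classifying map is $s=t^2$, which vanishes to second order. So $\overline{Y}\to H^2(Y)$ is not versal for $X$, and your conclusion is also internally inconsistent: if $\overline{Y}\to H^2(Y)$ were miniversal, its nontrivial $W$-quotient could not also be. The correct tangent-space statement is not $PD^1(X)\cong H^2(Y)$ via this family; rather Namikawa computes $PD^1(X)$ by a local-to-global argument over the locus of $X$ up to codimension two (using the Kleinian transversal slices), proves unobstructedness by $T^1$-lifting, algebraizes the formal universal family using the conical $\Cx$-action, and then proves that the induced map of \emph{bases} $H^2(Y)\to B_X$ is a finite Galois cover with group $W$.

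Relatedly, your second step assumes the Torelli-type statement ($Y_\theta\cong Y_{\theta'}$ iff $\theta'\in W\theta$, plus rigidity) to manufacture the $W$-action on $\overline{Y}$; but this is essentially the main theorem being proved (the identification of the Galois group of $PD_Y\to PD_X$ with the Namikawa Weyl group), not an available input. Even granting it, $\overline{Y}/W\to H^2(Y)/W$ is not obviously flat with scheme-theoretic central fibre $X$; Namikawa avoids this by constructing the universal deformation of $X$ directly and exhibiting $\overline{Y}$ as its pullback under $H^2(Y)\to H^2(Y)/W$, rather than defining $\tX$ as a quotient.
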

The conical $ \Cx $ actions on $ Y$ and $ X $ extend to these deformations, compatible with the scaling action on $H^2(Y)$.  The actions of the Hamiltonian torus on $Y $ and $ X $ extends to a fibrewise action on $ \tY $ and $\tX $ \cite[Lemma 6.4]{MN}.

\begin{Example}
	Let $ G $ be a semisimple group and let $ Y = T^* G/B $ and $ X = \mathcal N$.  Then the diagram (\ref{eq:simult}) is given by Grothendieck's simultaneous resolution
		\begin{equation*} 
		\begin{tikzcd}
			\tY = \widetilde{\fg} \arrow{r} \arrow{d} & \ft \arrow{d} \\
			\tX = \fg \arrow{r} & \ft/W 
		\end{tikzcd}
	\end{equation*}
where $ \widetilde{\fg} = \{ ([g], x) : x \in Ad_g(\mathfrak b) \} $.  The map $ \fg \rightarrow \ft/W $ is the adjoint quotient map (given by the characteristic polynomial for $\fg = \ssl_n$).
\end{Example}

\begin{Example} \label{eg:deformHiggs}
	Suppose that $ Y = T^*N \ssslash_{0,\chi} G $ is the Higgs branch of the gauge theory defined by $ G, N$.  Let $ B = \Hom(\fg, \C) \subset \fg^*$, the space of 1-dimensional representations of $ \fg $.
	
	We can build a deformation over the base $ B $ by $ \tY = \Phi^{-1}(B) \sslash_\chi G \rightarrow B $.

    We have an injective ``Kirwan map'' $ B \rightarrow H^2(Y) $ given by linearly extending $ \xi \mapsto c_1(L_\xi)$, where $ \xi \in \Hom(G, \Cx) $ and $ L_\xi $ is the resulting line bundle on $ Y$.  When this map is surjective, then $ \tY \rightarrow B $ is the universal deformation from Theorem \ref{th:tY}.
\end{Example}  

\subsection{Quantizations} \label{se:quantize}
Let $ Y \rightarrow X $ be a conical symplectic resolution.  Then $ \C[X] $ is a $\N$-graded Poisson algebra (where the grading comes from the conical $ \Cx $ action).  

Let $ A $ be a graded $ \C[\hbar]$-algebra with $ \deg \hbar = 2$ and let $ \bar A = A / \hbar A$.  Assume that $A $ is free as a $ \C[\bar]$-module and that $ \bar A $ is commutative.  Then $ \bar A $ acquires a Poisson structure defined by
$$
\{ \overline{a}, \overline{b} \} = \overline{\tfrac{1}{\hbar}(a b - ba)} \quad \text{ for all $a, b \in A$}
$$
$ A $ is called a \textbf{graded quantization} of $ X $ if we are given an isomorphism of graded Poisson algebras $ \bar A \cong \C[X] $.

Since $ X $ is the affinization of $ Y $, $ \C[X] $ is the global sections of the structure sheaf $ \cO_Y $, which is a sheaf of Poisson algebras.  A \textbf{quantization} of $ Y $ is a sheaf $ \cA $ of $ \C[\hbar] $-algebras on $ Y $, such that $\bar \cA \cong \cO_Y $ as sheaves of Poisson algebras.  In this case $ \Gamma(Y, \cA) $ is a quantization of $ X $.  (Since $\cA $ is a sheaf, it doesn't make sense to ask that $ \cA $ is graded, instead we assume that it carries a $ \Cx$-equivariant structure, where $ \Cx $ is the conical action on $ Y $.)

The following result is due to Losev \cite[Cor 2.3.3]{Losev} building on previous work of Bezrukavnikov-Kaledin \cite{BeKa}.
\begin{Theorem}
The universal Poisson deformation $ \tY \rightarrow H^2(Y) $ admits a unique quantization $ \cA $. 
\end{Theorem}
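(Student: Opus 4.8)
The plan is to obtain $\cA$ by running the Bezrukavnikov--Kaledin theory of quantizations of symplectic varieties \cite{BeKa} in families over the base $\mathfrak p := H^2(Y)$, and then to use the universality of $\tY$ to pin the answer down uniquely.

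First I would recall the absolute statement. For the symplectic resolution $Y$, a Fedosov-type construction produces a quantization, and Bezrukavnikov--Kaledin show that the set of isomorphism classes of $\Cx$-equivariant (conical) quantizations of $Y$ is in bijection with $H^2(Y,\C)$ via the \emph{period map} $\cA\mapsto\mathrm{Per}(\cA)$, a characteristic class measured in de Rham cohomology (up to a fixed normalization involving the canonical class). The role of the conical $\Cx$-action is crucial here: in the non-equivariant setting quantizations form a torsor over the infinite-dimensional group $\hbar H^2(Y,\C)[[\hbar]]$, and $\Cx$-equivariance cuts this down to the single cohomology class $\mathrm{Per}(\cA)$.

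Next I would carry out the same construction relatively for the smooth family of symplectic varieties $\tY\to\mathfrak p$. Fedosov quantization applies fibrewise, and the local models are glued using that the gluing ambiguities and obstructions live in the relative de Rham cohomology of $\tY/\mathfrak p$, which equals $H^\bullet(Y,\C)\otimes\C[\mathfrak p]$ because the family $\tY\to\mathfrak p$ is topologically trivial \cite[Lemma 6.4]{MN}. This yields a $\Cx$-equivariant sheaf $\cA$ of flat $\C[\hbar]$-algebras on $\tY$ with $\cA/\hbar\cong\cO_{\tY}$ as sheaves of Poisson $\C[\mathfrak p]$-algebras. The extra point — and this is where the \emph{universality} of $\tY$ enters — is that among all such sheaves exactly one has the property that its restriction to each fibre $Y_\theta$ has period $\theta$, using the identification $H^2(Y_\theta)=H^2(Y)=\mathfrak p$ coming from topological triviality; this is the quantization-side analogue of the normalization $[\omega_\theta]=\theta$ of the deformation. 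Existence of this normalized $\cA$ is arranged by composing with an automorphism of the family $\tY/\mathfrak p$; uniqueness then follows from the absolute classification applied fibrewise, since a competing quantization $\cA'$ of $\tY$ determines a period map $\theta\mapsto\mathrm{Per}(\cA'|_{Y_\theta})$, the normalization forces this to be the identity of $\mathfrak p$, and then bijectivity of the absolute period map upgrades to $\cA'\cong\cA$ over $\mathfrak p$ after a gauge transformation.

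The main obstacle is exactly the content of \cite{BeKa} and \cite{Losev}: establishing the cohomological vanishing that makes Fedosov quantization work in this relative, non-compact, $\Cx$-equivariant \emph{algebraic} setting, and proving that the period map is a genuine bijection there rather than merely injective. Granting that machinery, the remaining steps — tracking the conical grading, computing the relative de Rham cohomology via topological triviality, and matching the deformation parameter $\theta$ with the quantization period — are essentially bookkeeping.
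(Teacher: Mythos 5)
The paper itself offers no proof of this theorem: it is quoted from Losev \cite[Cor.\ 2.3.3]{Losev}, building on Bezrukavnikov--Kaledin \cite{BeKa}, and your outline --- Fedosov quantization run relatively over $H^2(Y)$, classification by a period map valued in relative de Rham cohomology, identification of the deformation parameter with the quantization period via the topological triviality of $\tY \to H^2(Y)$ --- is indeed the strategy of those references. So at the level of approach you are aligned with the sources the paper relies on, and the existence half of your sketch is sound modulo the (correctly flagged) technical work of making Fedosov quantization function in the algebraic, non-compact, equivariant setting.

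There is, however, a genuine gap in the uniqueness step. What your argument establishes is that there is a unique quantization of $\tY$ \emph{normalized} so that the fibre over $\theta$ has period $\theta$; the theorem asserts unqualified uniqueness, and these are not the same statement. Conical equivariance alone does not force the normalization: with $\hbar$ of weight $2$ and the base $H^2(Y)$ scaled with weight $2$, the period of an equivariant quantization of the family must be homogeneous of weight $2$, and the count $\deg\bigl(\hbar^k c_k(\theta)\bigr) = 2k + 2\deg c_k$ kills all corrections \emph{except} a constant term $\hbar c$ with $c \in H^2(Y)$; so a priori there remains an $H^2(Y)$-worth of candidate quantizations with fibrewise periods $\theta + \hbar c$. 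Your sentence that existence of the normalized $\cA$ is ``arranged by composing with an automorphism of the family'' is precisely the point requiring proof, and as stated it is backwards: an automorphism of $\tY/H^2(Y)$ inducing the identity on $\cO_{\tY}$ modulo $\hbar$ does not visibly shift the period by $\hbar c$, and if it did, the same mechanism would threaten the injectivity of the absolute period map you rely on. In Losev's treatment the residual parameter is eliminated by an additional symmetry constraint on the quantization (evenness, i.e.\ an anti-automorphism acting by $\hbar \mapsto -\hbar$, which forces the corresponding part of the period to vanish and hence $c = 0$); the survey's statement implicitly carries this normalization. Without some such input, you have shown only that normalized quantizations are unique, not that every conical quantization of $\tY$ is isomorphic to $\cA$.
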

Taking global sections $ \A :=\Gamma(\tY, \cA) $ gives a quantization of $ \tX $, containing $ \C[H^2(Y)] $ as a central subalgebra.  Each $ \theta \in H^2(Y) $ gives a quantization $ A_\theta := \A \otimes_{\C[H^2(Y)]} \C $ of $ X $.  We have $ A_\theta \cong A_{w\theta} $ for $ w \in W $, and so we regard $ H^2(Y)/W $ as the quantization parameters for $ X $.

\begin{Example}
	Once again, $ Y = T^* G/B $ gives a good example. A quantization of $ Y $ is provided by the sheaf of microdifferential operators on $ G/B $.  The global sections of this sheaf is given by the global differential operators on $ G/B $, which in turn is isomorphic to $ U_\hbar \fg $ (the asymptotic enveloping algebra) modulo the positive part of the centre. More generally, we identify $ H^2(Y) = \ft^* $.  Then each $ \theta \in \ft^* $ gives us the quantization $A _\theta = U_\hbar \fg / Z_\theta $, where $ Z_\theta $ is the ideal generated by $ z - \theta(z) $ for $ z\in Z(U\fg) = (\Sym \ft)^W$.  In this situation, we have $ \A = \tilde U_\hbar \fg := U_\hbar \fg \otimes_{Z(U \fg)} \Sym \ft $.
\end{Example}

\begin{Example}
	In the Hamiltonian reduction setting, where $ X = T^*N \ssslash_{0,0} G $, we can construct quantizations as follows.  We begin with the quantization of $ T^* N$, which is given by the algebra $ D(N)$ of differential operators on $ N$.  Then we apply the procedure of quantum Hamiltonian reduction (see \cite[Section 3.4]{BPW}).  The quantization parameter $ \theta $ comes from the moment map level in the quantum Hamiltonian reduction.
\end{Example}

\subsection{Category $\cO$}
Fix $ \theta \in H^2(Y)/W $.  We will be interested in categories of modules over $ A_\theta $, especially the generalization of the Bernstein-Bernstein-Gelfand category $ \cO $.

Let $ Y \rightarrow X $ be a conical symplectic resolution, equipped with a Hamiltonian $ T $ action.  The $ T $-action on $ \C[X] $ quantizes to a $ T $-action on $ A_\theta $.  The derivative of this action is inner, since the moment map quantizes to a map $ \ft \rightarrow A_\theta $.

 Fix a generic $ \rho : \Cx \rightarrow T $.  Using $ \rho $, we can restrict the $ T $ action on $ A_\theta$ to a $ \Cx $ action, which gives a $ \Z$-grading on $ \C[X] $ and $ A_\theta $.

\begin{Example}
	Take $ Y = T^* G/B $ and so $ A_\theta = U_\hbar \fg / Z_\theta $.  Then the Hamiltonian torus action comes from the maximal torus $ T $ of $ G $ and we choose the usual $ \rho $.  With this choice, the $ \Z $ grading on $ U_\hbar \fg / Z_\theta $ assigns $ \deg E_i = 1, \deg F_i = -1 $ where $ E_i, F_i $ are the usual Chevalley generators.
\end{Example}

Let $ \C[X]^+, A_\theta^+ $ be the positively graded parts with respect to $ \rho $.  There is a compatibility between $ \C[X]^+ $ and $X^+ $ as follows; see \cite[1.3.4]{Dr} for a scheme-theoretic version.
\begin{Lemma} \label{le:idealplus}
	The radical ideal generated by $ \C[X]^+ $ defines $ X^+ $ as a subvariety of $ X $.
\end{Lemma}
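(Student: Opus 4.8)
The plan is to identify $X^+$ scheme-theoretically with the fixed-point-limit construction and then extract the radical ideal. Recall that $X^+ = \{x \in X : \lim_{s\to 0} s\cdot x \text{ exists in } X^T\}$, where $s$ runs over the chosen one-parameter subgroup $\rho : \Cx \to T$. The key observation is that a point $x$ lies in $X^+$ precisely when the orbit map $\Cx \to X$, $s \mapsto s\cdot x$, extends to a morphism $\A^1 \to X$. Since $X = \Spec \C[X]$ is affine, such an extension exists if and only if the pullback $\C[X] \to \C[s,s^{-1}]$ (dual to the orbit map) actually lands in $\C[s]$. Decomposing $f \in \C[X]$ into its $\Z$-graded pieces $f = \sum_k f_k$ with respect to $\rho$, the pullback of $f$ along the orbit map is $\sum_k f_k(x) s^k$, so the extension condition is exactly that $f_k(x) = 0$ for all $k < 0$, i.e. that $x$ lies in the common zero locus of $\C[X]_{<0}$. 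By the $\rho$-action symmetry $s \mapsto s^{-1}$, or by directly using that a contracting-type argument, the vanishing of all negatively graded functions is equivalent to vanishing of the ideal $(\C[X]^+)$ after noting the grading conventions; more carefully, one works with whichever sign convention makes the limit exist, and $\C[X]^+$ is the relevant piece.

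First I would make the grading conventions precise: $\rho$ is generic so that $X^T = X^{\rho(\Cx)}$, and the limit $\lim_{s\to 0}$ picks out extendability to $\A^1$. Second, I would establish the set-theoretic equality: $x \in X^+$ iff every homogeneous $f \in \C[X]$ of the ``wrong'' degree vanishes at $x$, which is an elementary argument with the orbit morphism into the affine variety $X$ as sketched above, using that $\C[X]_0 = \C$ (Remark \ref{re:nodeg1}) to handle the degree-zero part. Third, I would pass to the reduced structure: the ideal $I$ generated by $\C[X]^+$ cuts out a closed subscheme whose underlying set is $X^+$, so the radical $\sqrt{I}$ is the ideal of $X^+$ as a \emph{variety}, which is exactly the claim. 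The reference to \cite[1.3.4]{Dr} supplies the finer scheme-theoretic statement (that $I$ itself, perhaps already radical in good cases, defines the attracting scheme), but for the lemma as stated only the variety-level assertion is needed.

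The main obstacle is purely bookkeeping around sign conventions: one must be careful that ``positively graded'' with respect to the chosen $\rho$ corresponds to the functions that are \emph{forced to vanish} on the attracting set, rather than the ones that survive in the limit. Depending on whether one defines $X^+$ via $\lim_{s\to 0}$ or $\lim_{s\to\infty}$ and whether $\rho$ or $\rho^{-1}$ is used, the roles of $\C[X]^+$ and $\C[X]^-$ swap. The substance of the proof — extendability of an orbit map to $\A^1$ is detected by graded components of the coordinate ring — is standard and short; no genuine geometric difficulty arises because $X$ is affine, so there is no subtlety about where the limit point lives or about properness.
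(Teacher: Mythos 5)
Your argument is correct and is exactly the standard proof of the statement the paper delegates to \cite[1.3.4]{Dr}: extendability of the orbit map $\Cx \to X$ to $\mathbb{A}^1$ is detected by the graded pieces of $\C[X]$ pulling back into $\C[s]$, which identifies $X^+$ set-theoretically with the zero locus of the homogeneous functions of one sign, and passing to the radical gives the reduced subvariety. The sign bookkeeping you flag does resolve as stated (with the paper's conventions the \emph{positively} graded functions are the ones forced to vanish on the attracting set, as one checks on $X=\cN_{\gl_n}$, where $V(\C[X]^+)=\mathfrak b\cap\cN=\mathfrak n=X^+$), so there is no gap.
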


This motivates the following category.
\begin{Definition}
	The category $ \cO $ for $ A_\theta $, denoted $ \Oof{A_\theta}$, is the full subcategory of finitely-generated $ A_\theta$ modules on which $ A_\theta^+ $ acts locally nilpotently.
\end{Definition}

 Braden-Proudfoot-Licata-Webster constructed a characteristic cycle map 
 \begin{equation} \label{eq:ccmap}
 	cc : K(\Oof{A_\theta}) \rightarrow H(Y^+)
 \end{equation}
as follows.  We begin with a $A_\theta $-module $M $ in category $ \cO $.  Generalizing the work of Beilinson-Bernstein \cite{BB}, we localize $M $ to a sheaf $ \cM $ of $ \cA_\theta$-modules on $ Y $.  Then we take $ \cM / \hbar \cM $ which will be a coherent sheaf on $ Y $.  Since the positively graded part of $ A_\theta $ acts locally nilpotently, by Lemma \ref{le:idealplus} this coherent sheaf will be set-theoretically supported on $ Y^+ $.  Thus its support defines a cycle in $H(Y^+)$.  

The characteristic cycle map is often, but not always, an isomorphism.  In particular, \cite[Theorem 6.5]{BLPW} proves that the related map $ K(\Oof{\cA_\theta}) \rightarrow H(Y^+) $ is an isomorphism.  This implies that  (\ref{eq:ccmap}) is an isomorphism whenever the localization functor gives an equivalence of derived categories, which is true for almost all $ \theta$.

\section{Symplectic duality} \label{se:SD}

\subsection{Background and some examples} \label{se:3dmirror}
Symplectic resolutions come in dual pairs $ Y \rightarrow X, Y^! \rightarrow X^! $ with certain structures for $ Y $ matching other structures for $ Y^! $. In mathematics, symplectic duality was first discovered by Braden-Proudfoot-Licata-Webster \cite{BLPW}, by generalizing Gale dual hypertoric varieties, and the duality between moduli spaces of instantons.

 However, these symplectic dual pairs had already been discovered in physics by Intriligator-Seiberg \cite{IS}, related to mirror duality for 3d supersymmetric quantum field theories (for this reason, another name for symplectic duality is \textbf{3d mirror symmetry}).  More precisely, every such field theory has two moduli spaces called the Higgs branch and the Coulomb branch.  These spaces are often symplectic resolutions.  Mirror duality exchanges these two spaces; that is, the Higgs branch of one theory is isomorphic to the Coulomb branch of the dual theory.  On the other hand, for a given theory, the Higgs and Coulomb branches are symplectic dual pairs in the mathematical sense.
 
Here are some examples of symplectic dual pairs.
\begin{center}
	\begin{tabular}{ c|c }
		$Y$ & $Y^!$ \\
	\hline
$T^* \PP^{n-1}$ & $\widetilde{\C^2/ \Z_n}$ \\
$T^* G/B$ & $T^* G^\vee / B^\vee $\\
hypertoric variety & dual hypertoric variety	\\
 ADE quiver varieties & affine Grassmannian slices \\
$Y(r,n)$ from Example \ref{se:instantons} & $\operatorname{Hilb}_n(\widetilde{\C^2/\Z_r})$ \\
\end{tabular}
\end{center}

We would like to make a statement on the ontological nature of symplectic duality.  We do not formulate a precise definition of symplectic duality.  Instead, we will list statements (\ref{eq:sdT}), (\ref{eq:sdS}), (\ref{eq:sdC}), (\ref{eq:sdO}), (\ref{eq:sdH}), (\ref{eq:sdR}) which are known or expected in the examples above and for other dual pairs.  In these statements, a certain structure on $ Y $ matches a different structure on $ Y^!$.  Moreover, we expect that symplectic duality is ``involutive'', so that $ (Y^!)^! = Y $.  Thus, each of these statements below implies another statement in the reverse direction.

\begin{figure}
 \includegraphics[trim=0 280 0 60, clip, width=0.6\textwidth, angle=0]{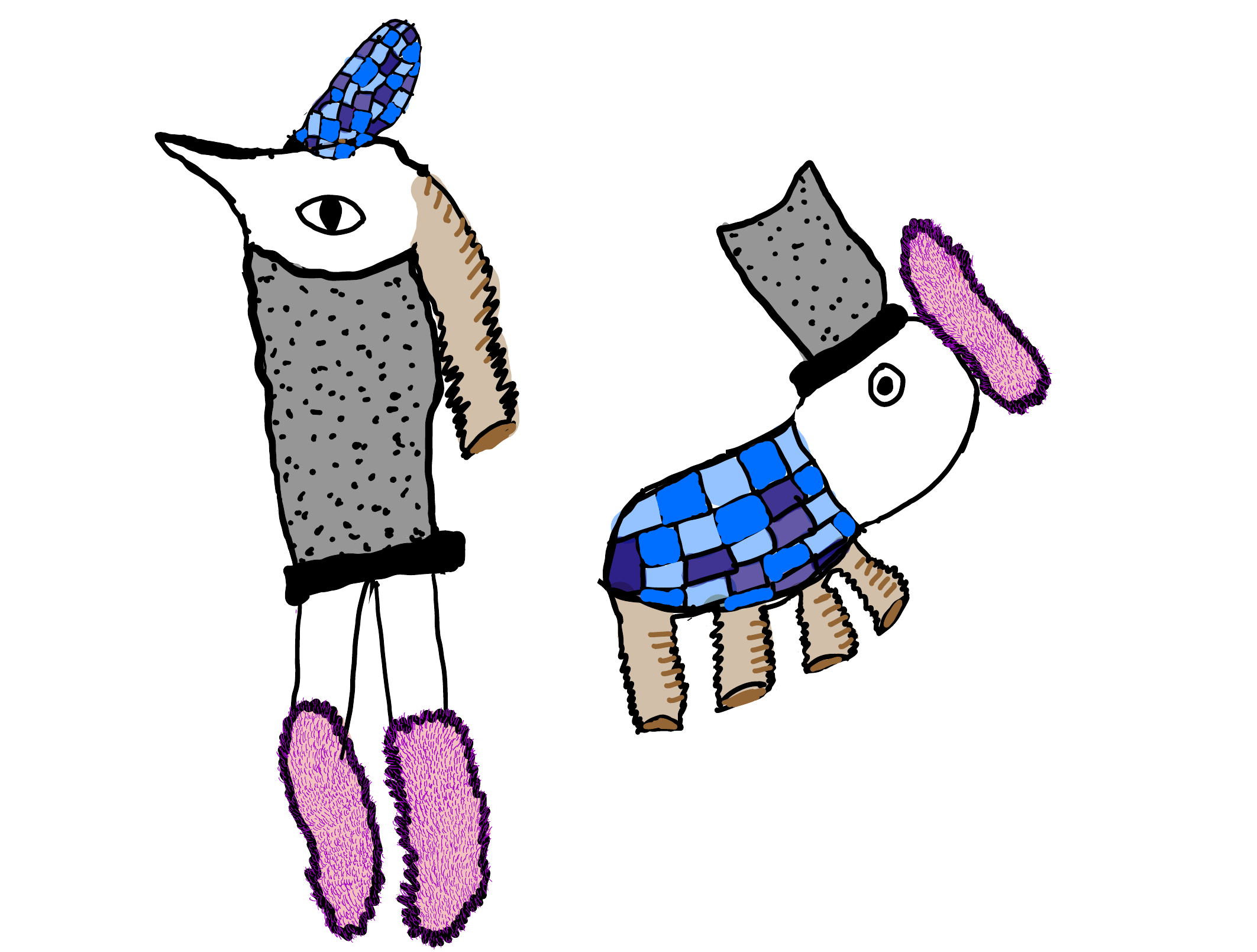} 
\caption{An artist's view of symplectic duality: certain structures on one creature match different structures on the other creature.}
\end{figure}

The reader should keep in mind that these statements are made under the ``best possible conditions'', when both spaces are conical symplectic resolutions with Hamiltonian torus actions with finitely many fixed points.  We also continue with the assumption that only trivial local systems appear in the decomposition of the Springer sheaf.  Finally, we assume that all symplectic leaves are \textbf{special} in the sense of \cite[section 6.3]{BLPW}.  These assumptions hold for hypertoric varieties and the ADE quiver varieties considered in section \ref{se:qv2}. (If some of these assumptions do not hold, then the statements below require modification.)

The collection of statements below is based on the foundational paper \cite{BLPW} and discoveries since then.  We emphasize that this is not meant to be an exhausive list; there are other matchings structures in the literature and the reader is free to discover more.

In what follows, $ Y \rightarrow X $ and $ Y^! \rightarrow X^! $ denote dual symplectic resolutions.  All spaces/structures associated to $ Y^! $ will be marked with a $ ! $.

\subsection{Hamiltonian torus and second cohomology}
We assume that $ Y \rightarrow X $ is equipped with the action of a Hamiltonian torus $ T $.  Symplectic duality identifies the Lie algebra $ \ft $ of this torus with the second cohomology group $ H^2(Y^!) $.

In fact, both of these vector spaces possess additional structures.  First, each carry natural $ \Z$-lattices.  For $ \ft $, this is the coweight lattice $ \ft_\Z$, and for $ H^2(Y) $, this is $ H^2(Y, \Z) $.  Second, the real form of each contains a cone.  In $ \ft_\Z $, we consider the set of all $ \rho : \Cx \rightarrow T  $ whose attracting sets give $Y^+$; this is the set of integral points of a cone in $ \ft_\R $, called the \textbf{attracting cone}.  In $ H^2(Y,\R)$ we have the \textbf{ample cone} of $ Y$ (by Kaledin \cite{Ka}, we have $ H^2(Y,\Z) = \operatorname{Pic}(Y) $).

\begin{equation} 
	\begin{gathered}\label{eq:sdT} \tag{\textbf{T}}
	\text{There is an isomorphism $ \ft_\Z \cong H^2(Y^!,\Z) $,} \\ \text{taking the attracting cone to the ample cone.}
	\end{gathered}
\end{equation}
In particular, $ T $ is identified with the K\"ahler torus $ K^! $ (defined in section \ref{se:QC}).

\subsection{Matching of strata}
Recall that from section \ref{se:leaves}, we have a stratification of $ X $ by symplectic leaves $ \{ X_j \}_{j \in J } $.  The set $ J $ of leaves has the structure of a poset by containment.  For each leaf, we will assume that we have transversal slices $ V_j $ which themselves admit symplectic resolutions $ \widetilde V_j $.

On the dual side, we also have leaves $ X^!_{j} $ for $j \in J^! $, which we will also assume admit symplectic resolution $ Y^!_{j}$.
\begin{equation}
	\begin{gathered} \label{eq:sdS} \tag{\textbf{S}}
	\text{There is an order reversing bijection $ J \rightarrow J^! $ denoted $ j \mapsto j^! $} \\
	\text{ such that $ \widetilde V_j $ and $ Y^!_{j^!} $ are symplectic dual}
	\end{gathered}
\end{equation}
Many examples of these posets of strata and these order reversing bijections are given in \cite{GN}.

\subsection{Matching of cycles}
Recall that we have the hyperbolic BBD decompositions  (\ref{eq:fund})
\begin{equation*} 
	H(Y^+) = \bigoplus_{j \in J} H(\overline{X_j^+}) \otimes H(F_j) \quad 	H({Y^!}^+) = \bigoplus_{j \in J^!} H(\overline{{X^!}_j^+}) \otimes H({F^!}_j)
\end{equation*}
Under symplectic duality, $H(Y^+) \cong H({Y^!}^+) $.  Assume that (\ref{eq:sdS}) holds, so that these two decompositions are indexed by the same set but in reversed order.  We also want these decompositions to be reversed in a second sense, exchanging fibres and attracting sets of strata.

\begin{equation}
\begin{gathered} \label{eq:sdC} \tag{\textbf{C}}
	\text{ There are bijections $ \Irr \overline{X_j^+} \cong \Irr {F^!}_j $, $ \Irr F_j \cong \Irr \overline{{X^!}_j^+} $ }
\end{gathered}
\end{equation}

Because of (\ref{eq:sdS}), it suffices to construct bijections $ \Irr X^+ \cong \Irr F^!_0 $ and $ \Irr F_0 \cong \Irr {X^!}^+ $.

\subsection{Koszul duality for Category $\cO$}
Let $ A = A_\theta $ be a quantization of $ Y \rightarrow X $ where $ \theta $ is generic and integral.  Because of (\ref{eq:ccmap}), we think of $ \Oof{A} $ as a categorification of $ H(Y^+) $.  From (\ref{eq:sdC}), we expect an isomorphism $ H(Y^+) \cong H({Y^!}^+) $.  Thus it is reasonable to expect an equivalence between the category $ \cO $ for dual pairs. Motivated by the work of Beilinson-Ginzburg-Soergel \cite{BGS} on Kozsul duality for category $ \cO $ for a semisimple Lie algebra, Braden-Licata-Proudfoot-Webster \cite{BLPW} conjectured the following.
\begin{gather} \label{eq:sdO} \tag{\textbf{O}}
	\text{There is a Koszul duality between $ \Oof{A} $ and $ \Oof{A^!}$} 
\end{gather}
This means that we have graded lifts $ \tilde \cO $ and $ \tilde \cO^! $ of these two categories and equivalence between their derived categories $ D(\tilde \cO) \cong D(\tilde \cO^!) $ of a specified form.

\subsection{Hikita conjecture}
The following was first observed by Hikita \cite{Hikita} and then extended to the non-commutative setting by Nakajima.

Let $ X $ be a scheme with an action of $ \Cx $.  Then it has a fixed point subscheme $ X^{\Cx} $ (see \cite[Def 1.2.1]{Dr}), which can have an interesting non-reduced structure, even if $ X^{\Cx} $ is a single point.  Suppose that $ X = \Spec A $ is an affine scheme.  Then the $\Cx $ action on $ X $ corresponds to a $ \Z$-grading on $ A $, and we have $X^{\Cx} = \Spec B(A)$ (see \cite[Ex 1.2.3]{Dr}), where $ B(A) $ is defined as follows.
\begin{Definition}
If $ A = \displaystyle{\mathop{\oplus}_{k \in \Z}} A(k) $ is a $ \Z$-graded algebra, then we define the \textbf{$B$-algebra} of $ A$, by
$$ B(A) = A(0) / \sum_{k > 0} A(-k) A(k)
$$
Note that $ \sum_{k > 0} A(-k) A(k) $ is a two-sided ideal of $ A(0)$.
\end{Definition}

On the other hand, if $ A $ is a non-commutative $ \Z$-graded algebra, then $ B(A) $ is closely related to the study of category $ \cO $ for $ A$.  To be more precise, we have a functor $ \Oof{A} \rightarrow \Mof{B(A)} $ taking $ M $ to $ M^{A^+} = \{ m \in M : a m = 0 \text{ for all $a \in A^+$}\} $.  Moreover, we have an adjoint functor of induction, which can be used to define standard objects in category $ \cO$ (see \cite[\S 5]{BLPW}).

Let $ Y \rightarrow X $ be a symplectic resolution and as before fix some generic $ \rho : \Cx \rightarrow T $.  This gives us a $ \Z$-grading on $ \C[X] $ and on the universal quantization $\A$.  We consider the resulting $B$-algebra $B(\A)$.  Note that the map $ \C[H^2(Y)][\hbar] \rightarrow \A $ lands in $  \A(0) $ and so descends to a map $ \C[H^2(Y)][\hbar] \rightarrow B(\A) $.  Since $ X^T $ is a single point, $ B( \A) $ is finite (as a module) over $ \C[H^2(Y)][\hbar]$.

On the symplectic dual side, we will be interested in $ H^\bullet_{T^! \times \Cx}(Y^!) $ which is finite (as a module) over $ H^\bullet_{T^! \times \Cx}(pt) = \C[\ft^!][\hbar]$.

\begin{equation} \tag{\textbf{H}}
	\begin{gathered}\label{eq:sdH}
\text{There is an algebra isomorphism $ B( \A) \cong H^\bullet_{T^! \times \Cx}(Y^!) $ compatible with} \\ \text{ the isomorphisms $ \ft \cong H^2(Y^!) $ and $ H^2(Y) \cong \ft^! $ from (\ref{eq:sdT}). }
\end{gathered}
\end{equation}
If we forget the equivariance, then this statement can be simplified to
$ X^{\Cx} \cong \Spec H^\bullet(Y^!)$, which was the form originally noted by Hikita.

\subsection{Enumerative geometry}
A further topic is the relationship between the enumerative geometry of symplectic dual pairs.  Recall in section \ref{se:QC}, we discussed the quantum cohomology of symplectic resolutions and the notion of K\"ahler roots.

Let $ Y \rightarrow X $ be a Hamiltonian symplectic resolution.  The \textbf{equivariant roots} of $ Y $ are the weights of $ T $ on the tangent spaces at the points of $ Y^T $.  These equivariant roots are useful since they appear in the denominators in the character formula for standard modules for quantizations $ A_\theta $ (see \cite[Prop. 5.20]{BLPW}).

The following statement was first formulated in \cite[Section 3.1.8]{okounkov2015enumerative}.

\begin{equation} \tag{\textbf{R}} \label{eq:sdR}
	\begin{gathered}
\text{Under the isomorphism  $ \ft^* \cong H_2(Y^!) $ from (\ref{eq:sdT})} \\
\text{the equivariant roots for $Y$ coincide with the K\"ahler roots for $Y^!$}
\end{gathered}
\end{equation}

Building on (\ref{eq:sdH}) and (\ref{eq:sdR}), in \cite{qh} we formulated a quantum Hikita conjecture, relating to the $D$-module of characters for $ A_\theta $ to 
a specialization of the quantum connection for $ Y^!$.

In another direction, Aganagic-Okounkov \cite{AO} introduced the notion of elliptic stable envelopes and formulated a conjectural statement relating the elliptic stable envelopes of symplectic dual pairs.  It is worth noting that in this statement (unlike all the above statements), the two dual resolutions play a symmetric role.

\subsection{Symplectic duality for hypertoric varieties} \label{se:sdhv}
For hypertoric varieties all of the above statements are true and have been thoroughly investigated.

We return to the setting of section \ref{se:hypertoric} and assume that we are given two dual short exact sequences of tori
$$
1 \rightarrow G \rightarrow (\Cx)^n \rightarrow T \rightarrow 1 \qquad 
1 \rightarrow T^\vee \rightarrow (\Cx)^n \rightarrow G^\vee \rightarrow 1$$
and we will write $ G^! := T^\vee, T^! := G^\vee $.  We also fix $ \chi \in \Hom(G, \Cx) = \Hom(\Cx, T^!) $ and $ \rho \in \Hom(\Cx, T) = \Hom(G^!, \Cx) $. 

We will use this data to define two hypertoric varieties $ Y := T^* \C^n \ssslash_{0,\chi} G $ and $ Y^! = T^* \C^n \ssslash_{0,\rho} G^! $, along with Hamiltonian $ \Cx $ actions on each.  Note that $ \chi $ is used as a GIT parameter to define $ Y $ and in order to define the $ \Cx $ action on $Y^!$. This is part of a general situation in symplectic duality where the data of the resolution $Y$ (for fixed $ X$) matches the data of the choice of $ \Cx \subset T^! $ on the dual side.

As in section \ref{se:hypertoric}, we have hyperplane arrangements in $ \ft^*_\R $ and $ \fg_\R^*$ which are used to study $ Y $ and $ Y^!$.  Moreover, these arrangements are \textbf{polarized}, which simply means that we have the extra data of the covectors $ \rho $ and $ \chi $, respectively.  These two polarized hyperplane arrangements are related by the combinatorial operation of Gale duality, see \cite{BLPWGale}.  

\begin{Example}The Gale dual arrangements for $Y = T^* \PP^2$ and $ Y^! = \widetilde{\C^2/\Z_3} $ are given in Figure \ref{fig:hyp}.
\end{Example}

\begin{Theorem}
	The statements (\ref{eq:sdT}), (\ref{eq:sdS}), (\ref{eq:sdC}), (\ref{eq:sdO}), (\ref{eq:sdH}), (\ref{eq:sdR}) all hold for $ Y, Y^! $.
\end{Theorem}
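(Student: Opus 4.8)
The plan is to verify each of the six statements \eqref{eq:sdT}--\eqref{eq:sdR} for a Gale dual pair of polarized hyperplane arrangements, reducing everything to combinatorics. The unifying principle is that all the geometric objects attached to a hypertoric variety $Y$ (its second cohomology, its symplectic leaves, the irreducible components of attracting sets and core, category $\cO$, the $B$-algebra of the quantization, the equivariant cohomology, the K\"ahler and equivariant roots) have explicit descriptions in terms of the associated polarized arrangement, and Gale duality is precisely the operation that swaps the two arrangements. So the body of the proof is a dictionary followed by six one-line checks that the dictionary intertwines Gale duality with the six ``!'' operations.

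First I would set up the combinatorial data: a polarized arrangement $(V, \eta, \xi)$ where $V \subset \C^n$ is a subspace, together with an affine shift $\eta$ and a covector $\xi$, and recall from \cite{BLPWGale} that its Gale dual is $(V^\perp, \xi, \eta)$, so that $Y^! $ is the hypertoric variety of the Gale dual arrangement with $G^! = T^\vee$, $T^! = G^\vee$, and the roles of $\chi$ and $\rho$ exchanged. For \eqref{eq:sdT}: $H^2(Y, \Z)$ is freely generated by the hyperplane classes modulo the relations coming from $V$, i.e.\ it is $(\C^n/V)_\Z = (V^\perp)^*_\Z$, which is canonically $\ft^!_\Z$; the ample cone of $Y$ is cut out by the chamber structure determined by $\xi$, and on the dual side this is exactly the attracting cone determined by $\rho = \xi$, giving the matching of cones. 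For \eqref{eq:sdS}: the symplectic leaves of $X$ are indexed by the ``feasible and bounded-complement'' flats of the arrangement (equivalently coloop-free subsets), the transversal slice $\widetilde V_j$ is the hypertoric variety of the restricted/deleted arrangement, and Gale duality sends restriction to deletion, which is order-reversing on the poset of flats --- this is worked out in \cite{BLPW} section 6 and \cite{GN}. For \eqref{eq:sdC}: $\Irr F_0$ is indexed by the bounded chambers of the arrangement and $\Irr X^+$ by the feasible chambers, and Gale duality swaps bounded with feasible (\cite{BLPWGale}), which gives the required bijections $\Irr X^+ \cong \Irr F^!_0$ and $\Irr F_0 \cong \Irr {X^!}^+$, then one bootstraps to all strata using \eqref{eq:sdS}.

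For \eqref{eq:sdO} I would cite the construction of \cite{BLPW}: category $\cO$ for a hypertoric enveloping algebra $A_\theta$ is equivalent to modules over a finite-dimensional algebra built combinatorially from the polarized arrangement (the ``$\mathcal A$-algebra''), this algebra is Koszul, and its Koszul dual is the algebra attached to the Gale dual arrangement --- this is one of the main theorems of \cite{BLPW} and the reference can simply be invoked. For \eqref{eq:sdH}, the $B$-algebra $B(\A)$ of the universal hypertoric quantization is computed (again following \cite{BLPW} section 5, or \cite{Hikita}) to be the Stanley--Reisner-type ring of the arrangement, presented over $\C[H^2(Y)][\hbar] = \C[\ft^!][\hbar]$; on the dual side $H^\bullet_{T^! \times \Cx}(Y^!)$ has the well-known presentation as the same ring with generators the equivariant hyperplane classes and relations indexed by the circuits of the Gale dual matroid, and these two presentations literally coincide under the Gale duality dictionary. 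For \eqref{eq:sdR}: the equivariant roots of $Y$ are the weights $\pm e_i$ of $T$ appearing in the $T$-action on tangent spaces at fixed points, i.e.\ the normal directions to the hyperplanes $H_i$, while the K\"ahler roots of $Y^!$ are read off from the wall-crossing data of the Gale dual arrangement, and under $\ft^* \cong H_2(Y^!)$ these are the same set of $n$ vectors --- see \cite{McBreenShenfeld} for the quantum connection side.

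The main obstacle is not any single step but assembling a fully consistent normalization: one must fix one polarized arrangement and track how \emph{simultaneously} the GIT/ample data, the grading/attracting data, the quantization parameter $\theta$, and the choice of $\Cx \subset T^!$ are exchanged under Gale duality, so that the six isomorphisms are compatible with each other (in particular \eqref{eq:sdH} must respect the identifications from \eqref{eq:sdT}, and \eqref{eq:sdR} must be stated for the same $\ft^* \cong H_2(Y^!)$). Getting all sign and lattice conventions to line up across the Kirwan map, the period map, and the Gale dual matroid is the delicate bookkeeping; once a single consistent choice is made, each of the six statements is a short verification or a direct citation to \cite{BLPW}, \cite{BLPWGale}, \cite{Hikita}, and the enumerative references above. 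I would therefore organize the proof as: (i) fix conventions and recall the Gale dual dictionary; (ii) prove \eqref{eq:sdT}; (iii) deduce \eqref{eq:sdS} and \eqref{eq:sdC} from the matroid operations; (iv) cite \eqref{eq:sdO} from \cite{BLPW}; (v) prove \eqref{eq:sdH} by matching presentations; (vi) prove \eqref{eq:sdR} by identifying the $n$ vectors.
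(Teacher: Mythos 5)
Your proposal matches the paper's proof essentially verbatim: both reduce every statement to the combinatorics of Gale dual polarized arrangements and then cite \cite{BLPW}, \cite{BLPWGale}, \cite{BLPWhO}, \cite{Hikita}, \cite{McBreenShenfeld} for each of the six items (the paper phrases the matroid operation as restriction/localization rather than restriction/deletion, and the chamber bijection as compact chambers versus $\rho$-bounded cones via \cite[Theorem 5.25]{BLPWGale}, but these are the same facts). The only caveat is (\ref{eq:sdH}): the paper only claims the non-equivariant statement from \cite{Hikita} and notes the full equivariant version has not been carefully written down, whereas you assert it follows by matching presentations --- that sketch is plausible but would need to be carried out rather than cited.
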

\begin{proof}
	
	\begin{enumerate}
		\item[(\ref{eq:sdT})] The isomorphism $ \ft_\Z \cong H^2(Y^!,\Z) $ follows from Kirwan surjectivity.  The matching of the attracting cone with the ample cone follows from \cite[Section 8]{BLPWhO}.

\item[(\ref{eq:sdS})] The symplectic leaves of $ Y $ are given combinatorially by the \textbf{coloop-free flats} of the hyperplane arrangement in $ \ft^*_\R $.  This leads to order reversing bijections between the symplectic leaves of $ Y $ and $Y^!$, see \cite[Theorem 10.8]{BLPW}.  These leaves and their transversal slices are themselves hypertoric varieties, associated to \textbf{restriction} and \textbf{localization} of the hyperplane arrangement.  Under Gale duality, restriction and localization are exchanged \cite[Lemma 2.6]{BLPWGale}.  This establishes (\ref{eq:sdS}), see \cite[Example 10.19]{BLPW}.

\item[(\ref{eq:sdC})] As mentioned earlier, the core components of $ Y $ are given by the compact chambers in the hyperplane arrangment.  On the other hand, the components of $ X^+ $ are given by the $ \rho$-bounded maximal cones in the corresponding central hyperplane arrangement.  By \cite[Theorem 5.25]{BLPWGale}, these chambers and cones are exchanged under Gale duality.  This gives the desired bijection $ \Irr X^+ \cong \Irr F_0^! $, establishing (\ref{eq:sdC}).

\item[(\ref{eq:sdO})] The quantization of $ Y $, called the hypertoric enveloping algebra, has been extensively studied in \cite{BLPWhO}, where the desired Koszul duality was established.

\item[(\ref{eq:sdH})] The original (non-equivariant) Hikita statement for hypertoric varieties was established by Hikita in \cite{Hikita}.  The more general equivariant statement doesn't seem to have been carefully written in the literature.

\item[(\ref{eq:sdR})] Finally, the enumerative geometry of hypertoric varieties has been extensively studied.  The statement about roots follows immediately from \cite{McBreenShenfeld}.  In \cite{qh}, we established our quantum Hikita conjecture, including (\ref{eq:sdR}), for symplectic dual hypertoric varieties.  Moreover, Smirnov-Zhou \cite{SZ} studied elliptic stable envelopes for hypertoric varieties establishing the desired dual relationship.
\end{enumerate}
\end{proof}

\section{Geometrization and categorifications of representations} \label{se:SDQV}
In this section, we will see how symplectic duality can be used to explain the relation between two geometric constructions of representations of simple Lie algebras.

Let $ \fg $ be a simple simply-laced Lie algebra (in other words of type $ADE$).  Let $ P $ denote its weight lattice and $ P_+ $ the set of dominant weights.  Let $ \lambda \in P_+$ and write $ \lambda = \lambda_1 + \cdots + \lambda_n $ where all $ \lambda_k $ are fundamental.  We will be interested in the tensor product representation 
$$
V(\ul) := V(\la_1) \otimes \cdots \otimes V(\la_n) 
$$
We can decompose this representation into irreducible representations
$$
V(\ul) = \bigoplus_{\nu \in P_+} \Hom_\fg(V(\nu), V(\ul)) \otimes V(\nu)
$$
Fix another weight $\mu$ and take the $\mu$-weight space on both sides giving
\begin{equation} \label{eq:tensorDecomp}
V(\ul)_\mu = \bigoplus_{\nu \in P_+} \Hom_\fg(V(\nu), V(\ul)) \otimes V(\nu)_\mu 
\end{equation}
Note that the only $ \nu $ that occur here will satisfy $ \nu \le \lambda $ and $ \mu^{dom} \le \nu $, where $ \mu^{dom}$ is the dominant translate of $ \mu$.

Our goal will be to see (\ref{eq:tensorDecomp}) as the hyperbolic BBD decomposition (\ref{eq:fund}) for two dual symplectic resolutions.

\begin{Example} \label{eg:sl2}
	For our running example of (\ref{eq:fund}), we take $ \fg = \ssl_2 $, all $ \lambda _k = \omega $ and $ \mu = n \omega - \alpha $. So we have $ V(\ul) = (\C^2)^{\otimes n} $.  Then (\ref{eq:fund}) becomes
	\begin{gather*}
	(\C^2)^{\otimes n}_{n-2} = \Hom_{\ssl_2}(V(n), (\C^2)^{\otimes n}) \otimes V(n)_{n-2} \\ 
	\oplus \Hom_{\ssl_2}(V(n-2), (\C^2)^{\otimes n}) \otimes V(n-2)_{n-2}
	\end{gather*}
	which numerically gives $ n = 1 \cdot 1 + (n-1) \cdot 1 $.
\end{Example}

\subsection{Quiver varieties}  \label{se:qv2}
Let $ Q = (I,E) $ be an orientation of the Dynkin diagram of $ \fg $.  We define two dimension vectors $ \Bv, \Bw $ by the equations $ \lambda = \sum w_i \omega_i $ and $ \lambda - \mu = \sum v_i \alpha_i $.

As in section \ref{se:quiver}, we can use this data to construct a Nakajima quiver variety which we denote by $ Y =  M(\lambda, \mu) $ (for the projective GIT quotient) and $ X = M_0(\lambda,\mu) $.  We write $ M_0(\lambda, \mu)^{reg} $ for the open subset coming from points with trivial stabilizer for the action of the gauge group.

The following results are essentially due to Nakajima \cite{Nak98}, but are formulated in the language of symplectic resolutions in \cite{MN}.

\begin{Theorem} \label{th:Msymp}
	Assume that $ \mu $ is dominant.
	\begin{enumerate}
		\item
	$ M(\lambda,\mu) \rightarrow M_0(\lambda, \mu) $ is a conical symplectic resolution.  
\item 
The symplectic leaves of $ M_0(\la, \mu) $ are $ M_0(\la, \nu)^{reg} $ for $ \mu \le \nu \le \lambda $.
\item  The Hamiltonian torus $T = (\Cx)^{\sum w_i} $ has finitely many fixed points if every $ \lambda_k $ is minuscule.
\end{enumerate}
\end{Theorem}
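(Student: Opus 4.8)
The plan is to derive all three parts from Nakajima's foundational analysis of quiver varieties together with the general structure theory of conical symplectic resolutions recalled in Sections~\ref{se:def}--\ref{se:leaves}, using throughout the presentation of $M(\lambda,\mu)$ and $M_0(\lambda,\mu)$ as the projective and affine Hamiltonian reductions of $T^*N$ by $G=\prod_i GL(V_i)$ from Section~\ref{se:quiver}; write $\pi$ for the projective morphism $M(\lambda,\mu)\to M_0(\lambda,\mu)$. For part (1): smoothness of $M(\lambda,\mu)$ is \cite[Cor.~3.12]{Nak98}, quoted in Section~\ref{se:quiver}; the symplectic form on $M(\lambda,\mu)$ and the Poisson structure on $M_0(\lambda,\mu)$ are the ones induced from $T^*N$ by reduction, so $\pi$ is Poisson; $M_0(\lambda,\mu)$ is affine by construction and normal (part of Nakajima's analysis, via Crawley--Boevey's description of $\Phi^{-1}(0)$); and $\pi$ is projective since $M(\lambda,\mu)$ is built as a $\Proj$ over $M_0(\lambda,\mu)$. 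Dominance of $\mu$ enters only through birationality: when $\mu$ is dominant the regular (free-stabilizer) locus $M_0(\lambda,\mu)^{reg}$ is non-empty, hence open and dense, and $\pi$ restricts to an isomorphism over it, which is Nakajima's criterion for $\pi$ to be a resolution. Finally there is a conical $\Cx$-action on $T^*N$ rescaling the doubled arrow maps and framing maps so that the symplectic form has weight $2$, the moment map is equivariant, and $M_0(\lambda,\mu)$ is contracted to the origin; it descends to $M(\lambda,\mu)$, making $\pi$ a conical symplectic resolution.

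For part (2), the idea is to use the stratification of $M_0(\lambda,\mu)$ by isomorphism type of the underlying semisimple representation. A closed point of $M_0(\lambda,\mu)$ is a closed $G$-orbit in $\Phi^{-1}(0)$, i.e.\ (after adjoining an extra vertex in the manner of Crawley--Boevey) a semisimple representation of a preprojective-type algebra; grouping the framed summand with the unframed simple summands of its semisimplification produces a dominant weight $\nu$ with $\mu\le\nu\le\lambda$, and the remaining summands record a point of the corresponding ``nilpotent'' quiver variety. This yields Nakajima's decomposition $M_0(\lambda,\mu)=\bigsqcup_\nu M_0(\lambda,\nu)^{reg}$ into locally closed pieces, each smooth and symplectic (being a free quotient of part of the smooth symplectic locus), with closures $\overline{M_0(\lambda,\nu)^{reg}}=\bigsqcup_{\nu\le\nu'\le\lambda}M_0(\lambda,\nu')^{reg}$, so that the open dense piece is the one with $\nu=\mu$ and the poset of pieces is the interval $[\mu,\lambda]$ in dominance order. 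To conclude these pieces are exactly the symplectic leaves, I would combine Kaledin's theorem from Section~\ref{se:leaves} (a symplectic resolution has finitely many symplectic leaves, each leaf closure carrying a transversal slice) with the facts that $\pi$ is a locally trivial fibre bundle over each $M_0(\lambda,\nu)^{reg}$ and that the transversal slice to this stratum is again a quiver variety of the same shape for a smaller dimension vector; equality of the two stratifications then follows by matching dimensions, or, as the excerpt notes, by directly invoking \cite{Nak98} and its reformulation in \cite{MN}. I expect this identification to be the main obstacle: writing down the representation-type decomposition is elementary, but checking the closure order and that the transversal slices are themselves of quiver type, so that the inductive picture closes up, is where the real content sits.

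For part (3), take $T=\prod_i(\Cx)^{w_i}$ acting on $W=\bigoplus_i W_i$ with one-dimensional weight spaces $W^{(1)},\dots,W^{(n)}$, where $W^{(k)}$ sits at the vertex $i_k$ with $\lambda_k=\omega_{i_k}$. A $T$-fixed point of $M(\lambda,\mu)$ is represented by a framed quiver representation admitting a compatible $T$-action, hence by complete reducibility splitting as $\bigoplus_{k=1}^n R^{(k)}$ with $R^{(k)}$ carrying the one-dimensional framing $W^{(k)}$, so
\[
M(\lambda,\mu)^T=\bigsqcup\; \prod_{k=1}^n M(\omega_{i_k},\mu^{(k)}),
\]
the disjoint union over tuples $(\mu^{(k)})$ with $\sum_k\mu^{(k)}=\mu$ and each $\mu^{(k)}$ a weight of $V(\omega_{i_k})$. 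When every $\lambda_k$ is minuscule each $V(\omega_{i_k})$ has finitely many weights (forming a single Weyl orbit) and each factor $M(\omega_{i_k},\mu^{(k)})$ is a single reduced point, so the union is finite, which proves (3). Conversely, if some $\omega_{i_k}$ is non-minuscule then a weight of multiplicity $\ge 2$ yields a positive-dimensional factor, so the minusculity hypothesis is genuinely needed.
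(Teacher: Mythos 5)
Your proposal correctly reconstructs, along essentially the same lines, the standard arguments from \cite{Nak98}, \cite{NakTen} and \cite{MN} that the survey cites for this theorem without giving a proof of its own: the Hamiltonian-reduction/GIT formalism plus non-emptiness of the regular locus for (1), the representation-type stratification and its identification with the symplectic leaves for (2), and the splitting of $T$-fixed points into products of minuscule quiver varieties (each a reduced point) for (3). The only quibble is your closing converse claim, which is not part of the statement and as written is incomplete: a positive-dimensional factor $M(\omega_{i_k},\mu^{(k)})$ only produces infinitely many fixed points if that $\mu^{(k)}$ actually occurs in some admissible tuple summing to the given $\mu$.
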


If $ \mu $ is not dominant, then the map $ M(\lambda, \mu) \rightarrow M_0(\lambda, \mu) $  will fail to be surjective, as the following example illustrates.
\begin{Example} \label{eq:sl2}
	 Consider a quiver with one vertex and no edges, so that $ \fg = \ssl_2$.
 
	 Because of the framing, the quiver variety is still non-trivial and we have
	 $$ \Phi^{-1}(0) = \{(A,B) \in \Hom(V,W) \oplus \Hom(W,V)  : BA = 0\} $$ 
	Taking the quotient by $ GL_V$, we find 
	$$Y = T^* G(v,W) = \{ (U, C) : U \subset W, \dim U = v, CU =0, C W \subseteq U \} $$ 
	if $ v \le w $ (otherwise it is empty).  Here $ G(v,W) $ denotes the Grassmannian of $ v $-dimensional subspaces of $ W = \C^w $.
	
	On the other hand $$ X = \{C \in \End(W) : C^2 = 0, \rk(C) \le v \} $$ 
	 Note that for any $ (U,C) \in T^* G(v,W)$, we have $ \rk(C) \le \max(v,w-v) $ and thus $ Y \rightarrow X $ is surjective if and only if $ v \le w/2 $.  This corresponds to the condition that $ \mu = w \omega - v \alpha = w -2v $ is dominant.
	
\end{Example}

The decomposition $ \la = \la_1 + \dots + \la_n $ corresponds to a decomposition of $ W $ into one-dimensional vector spaces.  We choose $ \rho : \Cx \rightarrow T $ to have different eigenvalues on each of these one-dimensional pieces.  Then $ M(\la, \mu)^+ $ is a Nakajima tensor product variety \cite{NakTen}.  The following result combines \cite{NakTen} and the work of Varagnolo-Vasserot \cite{VV}. 
\begin{Theorem}
There is an isomorphism $ H(M(\la,\mu)^+) \cong V(\ul)_\mu $ fitting into the following diagram, where the first row is (\ref{eq:fund}) and the second row is (\ref{eq:tensorDecomp}).
\begin{equation}
	\begin{tikzcd}
		H(M(\la,\mu)^+) \arrow[equal]{r} \arrow{d}{\sim} & \bigoplus_{\nu} H(M_0(\la, \nu)^+)  \phantom{h} \otimes H(F_\nu) \arrow[d,"\sim", shift left=20] \arrow[d,"\sim",shift left = -5] \\
V(\ul)_\mu \arrow[equal]{r} & \bigoplus_{\nu} \Hom_\fg(V(\nu), V(\ul)) \otimes V(\nu)_\mu
\end{tikzcd} 
\end{equation}
\end{Theorem}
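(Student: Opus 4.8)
The plan is to build the isomorphism $H(M(\la,\mu)^+) \cong V(\ul)_\mu$ in two compatible pieces and then check that the hyperbolic BBD decomposition on the geometric side matches the tensor product decomposition on the representation-theoretic side under these identifications. First I would recall Nakajima's construction: the middle-degree Borel–Moore homology $\bigoplus_\mu H(M(\la,\mu))$ of the quiver varieties (taken over all $\mu$) carries a natural $\fg$-action via Nakajima's correspondence operators $E_i, F_i$ built from Hecke-type correspondences, and by the main theorem of \cite{Nak98} this representation is isomorphic to $V(\ul) = V(\la_1)\otimes\cdots\otimes V(\la_n)$, with the weight-space decomposition matching the decomposition by $\mu$. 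The point of \cite{NakTen} is that the attracting set $M(\la,\mu)^+$ for our chosen generic $\rho$ — whose eigenvalues separate the one-dimensional summands of $W$ — is precisely the \emph{tensor product variety}, and that its top Borel–Moore homology $H(M(\la,\mu)^+)$ carries a filtration whose associated graded recovers $V(\la_1)\otimes\cdots\otimes V(\la_n)_\mu$; more strongly, Nakajima identifies $H(M(\la,\mu)^+)$ with $V(\ul)_\mu$ as a vector space (the filtration being the tensor-product filtration). So the left vertical isomorphism is \cite{NakTen}.

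Next I would address the right-hand column. The hyperbolic BBD decomposition \eqref{eq:fund} reads $H(M(\la,\mu)^+) = \bigoplus_\nu H(\overline{M_0(\la,\nu)^+}) \otimes H(F_\nu)$, where by Theorem \ref{th:Msymp}(2) the strata of $X = M_0(\la,\mu)$ are the $M_0(\la,\nu)^{reg}$ for $\mu \le \nu \le \la$, and the fibre $F_\nu = \pi^{-1}(x_\nu)$ over a point $x_\nu$ in the $\nu$-stratum is (by the product structure of the transversal slice) the core of a smaller quiver variety $M(\nu,\mu)$, i.e.\ a Nakajima core fibre. The two pieces to identify are: (i) $H(F_\nu) \cong V(\nu)_\mu$, which is again Nakajima's theorem applied to the smaller quiver variety $M(\nu,\mu)$ whose core fibre $\mathfrak L(\nu,\mu)$ has top homology the $\mu$-weight space of the \emph{irreducible} $V(\nu)$ (here one uses that $\nu$ is dominant and that $M_0(\nu,\mu)$ has a single point stratum contributing the whole of $V(\nu)_\mu$ as core homology — this is the ``irreducible'' case of the quiver variety construction); and (ii) $H(\overline{M_0(\la,\nu)^+}) \cong \Hom_\fg(V(\nu), V(\ul))$, which is the content of Varagnolo–Vasserot \cite{VV}: the multiplicity space is realized as the top homology of (the closure of) the attracting set in the affine quiver variety $M_0(\la,\nu)$, equivalently as the stable-envelope / fixed-point description of $\Hom_\fg(V(\nu),V(\ul))$.

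The remaining task — and I expect this to be the real obstacle — is \emph{compatibility}: one must check that the decomposition of $H(M(\la,\mu)^+)$ coming from the perverse filtration (the BBD side) coincides, under the Nakajima isomorphism $H(M(\la,\mu)^+)\cong V(\ul)_\mu$, with the decomposition of $V(\ul)_\mu$ by isotypic components $\bigoplus_\nu \Hom_\fg(V(\nu),V(\ul))\otimes V(\nu)_\mu$. This is not formal: the left isomorphism comes from correspondence operators and the tensor-product filtration, while the right decomposition is the Beilinson–Bernstein–Deligne decomposition of the Springer sheaf $\pi_*\C[2d]$ restricted via the hyperbolic stalk functor $\Phi$. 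The way to bridge them is to verify that the $\fg$-action is compatible with $\pi$ — i.e.\ that the Hecke correspondences are constructible for the stratification by symplectic leaves, so that the $\fg$-module structure descends through the decomposition theorem — and that the highest-weight vectors for $V(\nu)$ inside $V(\ul)_\nu$ (hit by the raising operators $E_i$) correspond geometrically to the fundamental classes of the components of $\overline{M_0(\la,\nu)^+}$ of the appropriate dimension. Concretely I would proceed by downward induction on $\nu$ (from $\la$): the open stratum $\nu=\la$ contributes $\Hom_\fg(V(\la),V(\ul))\otimes V(\la)_\mu$, matched with $H(X^+)\otimes H(F_\la)$ where $H(F_\la) = H(M(\la,\mu)_{\text{irred}})$; peeling this off and passing to the boundary, one reduces to the same statement for each transversal slice $\widetilde V_\nu$, which by the expected product structure \eqref{eq:formalproduct} is again a quiver variety of the same type, so the induction closes. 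The technical heart is thus checking that Nakajima's $\fg$-action, the perverse filtration of \cite{MN}, and the Varagnolo–Vasserot multiplicity description are mutually compatible — this is essentially the theorem being asserted, and in the survey one would simply cite \cite{NakTen} and \cite{VV} for it rather than reproving it.
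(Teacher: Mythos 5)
Your proposal matches the paper's treatment: the result is stated there without proof, attributed exactly to the combination of Nakajima's tensor product varieties \cite{NakTen} (identifying $H(M(\la,\mu)^+)$ with $V(\ul)_\mu$) and Varagnolo--Vasserot \cite{VV} (for the compatibility with the perverse/isotypic decompositions), which is precisely the route you outline. Your identifications $H(F_\nu)\cong V(\nu)_\mu$ and $H(M_0(\la,\nu)^+)\cong\Hom_\fg(V(\nu),V(\ul))$, and your flagging of the compatibility of the two decompositions as the real content, are correct.
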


\begin{Example}
\label{eq:sl2quiver}
Let us take a look at Example \ref{eq:sl2} from the quiver variety viewpoint.  We have $ \Bv = 1, \Bw = n $, and
$$ M(\la, \mu) = T^* \Hom(\C, \C^n) \ssslash_{0,\chi} \Cx \cong T^* \PP^{n-1}$$
Then the hyperbolic BBD decomposition from the second part of Example \ref{eg:fund} matches Example \ref{eg:fund}.
\end{Example}

\subsection{Affine Grassmannian slices} \label{se:AGslices}
We will now consider the construction of representations using the geometric Satake correspondence.  Let $ G^\vee $ be the Langlands dual group to $  G$.

Let $ \cK = \C((t)), \cO = \C[[t]] $.  Let $ \Gr = G^\vee(\cK)/G^\vee(\cO) $ be the affine Grassmannian of $ G^\vee $, an ind-projective variety.  

By the definition of Langlands duality, the weights of $ \fg $ are the coweights of $ G^\vee$.  Each such coweight $ \mu $ is a map $ \Cx \rightarrow T^\vee $ and so defines a point $t^\mu $ in $ \Gr $.

For $ \lambda \in P_+$, we let $ \Gr^\lambda = G^\vee(\cO) t^\lambda$.  Its closure $ \overline{\Gr^\lambda}$ is called a \textbf{spherical Schubert variety}.  We have $ \Gr = \sqcup_{\lambda \in P_+} \Gr^\lambda $.

\begin{Example}
	If $ \fg = \ssl_m$, then $ \lambda = (\lambda^1, \dots, \lambda^m) $, with $ \lambda^1 \ge \cdots \ge \lambda^m = 0 $.  Then $G^\vee = PGL_m $ and 
	$$ t^\lambda = \begin{bmatrix} t^{\lambda^1} &  & 0 \\ & \ddots & \\ 0 & & t^{\lambda^m} \end{bmatrix}
	$$
	Then $ \Gr^\lambda$ can be identified with the space of $ \C[x]$ submodules $ M \subset \C[x]^m $ such that $ \C[x]^m/ M \cong \C[x]/x^{\lambda^1} \oplus \cdots \oplus \C[x]/x^{\lambda^m} $. 
\end{Example} 

We can speak about the ``distance'' between two points in the affine Grassmannian.  We write $ d([g_1], [g_2]) = \lambda \in P_+ $ if $[g_1^{-1} g_2] \in \Gr^\lambda $. 
Given a sequence $ \ul = \lambda_1, \dots, \lambda_n$ of dominant coweights, we consider the polyline variety
$$
\Gr^{\ul} := \Gr^{\la_1} \tilde{\times} \cdots \tilde{\times} \Gr^{\la_n} := \{(L_1, \dots, L_n) : d(L_{k-1}, L_k) = \lambda_k, \text{ for } k = 1, \dots, n \}
$$
where we fix $ L_0 = t^0 $. We have a convolution morphism $ \pi_\ul : \Gr^\ul \rightarrow \Gr $ taking $ (L_1, \dots, L_n) $ to $ L_n $.

\begin{figure}
	\includegraphics[trim=0 440 0 80, clip, width=0.9\textwidth, angle=0]{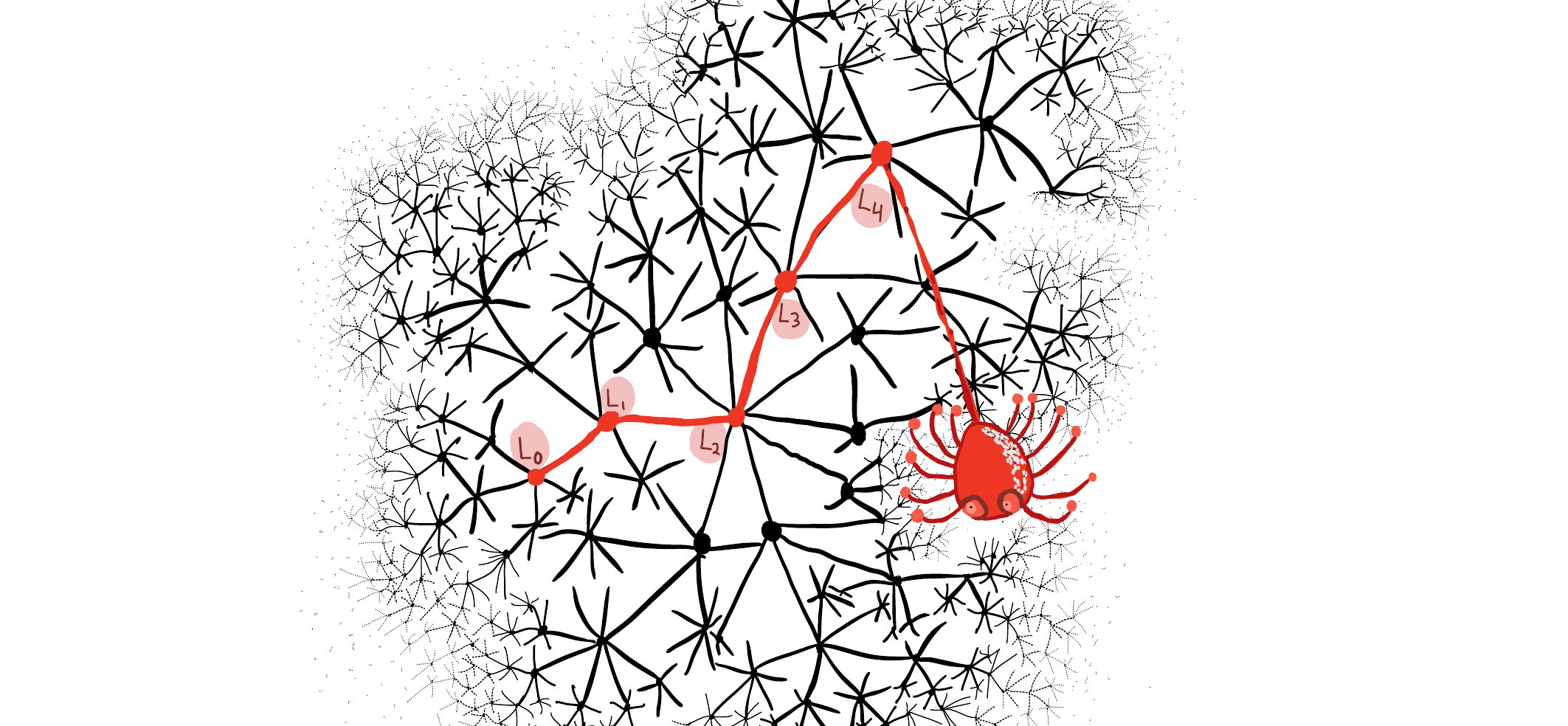} 
	\caption{A polyline in the affine Grassmannian.  For $ G^\vee=PGL_2$, the affine Grassmannian is an infinitely branching tree.}
\end{figure}

Let $ U^\vee \subset G^\vee $ be the maximal unipotent subgroup and let $ S^\mu = U^\vee(\cK) t^\mu $, for $ \mu \in P$.  Alternatively, $ S^\mu $ is the attracting set for a $ \Cx $ action on $\Gr $ defined by a generic dominant $ \Cx \subset T^\vee $.   

The following result is called the geometric Satake correspondence and was proven by Mirkovi\'c-Vilonen \cite{MV}, building on previous work by Lusztig \cite{L} and Ginzburg \cite{G}.
\begin{Theorem} \label{th:Satake}
\begin{enumerate}	
	\item There is an equivalence of category $ P_{G^\vee(\cO)}(\Gr) \cong \Rep \, G $ between the category of perverse sheaves on $ \Gr $ constructible with respect to the $ \Gr^\lambda $ and the category of representations of $G $. 
	\item This equivalence takes $ (\pi_{\ul})_* \IC_{\overline{\Gr^\ul}} $ to $ V(\ul) $.
	\item Under this equivalence, the weight functor $ \Rep \, G \rightarrow \Vect $ defined by $ V \mapsto V_\mu $ corresponds to the hyperbolic stalk functor
	$$ 
	 P_{G^\vee(\cO)}(\Gr) \rightarrow \Vect \quad \mathcal F \mapsto H^\bullet(S^\mu, s_\mu^! \mathcal F)
	 $$
	 where $ s_\mu : S^\mu \rightarrow \Gr $ is the inclusion (this hyperbolic stalk is concentrated in a single degree as in Theorem \ref{th:hypess}).
	 \end{enumerate}
\end{Theorem}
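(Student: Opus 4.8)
The plan is to follow the Tannakian strategy of Mirković--Vilonen: realize $P_{G^\vee(\cO)}(\Gr)$ as $\Rep\, H$ for an affine group scheme $H$ over $\C$, equip it with an exact faithful tensor functor (fibre functor), reconstruct $H$, and then identify $H$ with $G$ by computing its root datum. Parts (2) and (3) drop out of the reconstruction.

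\emph{Monoidal structure.} First I would give $P_{G^\vee(\cO)}(\Gr)$ the convolution product $\star$, built from the twisted external product $\cF_1\,\tilde\boxtimes\,\cF_2$ on $G^\vee(\cK)\times^{G^\vee(\cO)}\Gr$ and pushforward along the convolution map $m$. The essential point is that $\cF_1\star\cF_2$ remains perverse; this follows from a semismallness computation for $m$ restricted to $\overline{\Gr^{\la_1}}\,\tilde\times\,\overline{\Gr^{\la_2}}$, using the formula for $\dim\Gr^\lambda$ in terms of $\lambda$ and the half-sum of positive roots of $G^\vee$ (the same bookkeeping one would use to show $\pi_\ul$ is semismall). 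Associativity is immediate from the iterated convolution diagram. The commutativity constraint is the delicate part, and I would obtain it by realizing $\star$ as a \emph{fusion} product: over a Beilinson--Drinfeld Grassmannian over the affine plane $\mathbb{A}^2$, whose fibre over the diagonal is $\Gr$ and off the diagonal is $\Gr\times\Gr$, taking nearby cycles along the diagonal identifies convolution with the manifestly symmetric external product.

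\emph{Fibre functor.} The crux is an exact, faithful tensor functor $F\colon P_{G^\vee(\cO)}(\Gr)\to\Vect$. I take $F=H^\bullet(\Gr,-)$ and decompose it as $F\cong\bigoplus_{\mu\in P}F_\mu$ with $F_\mu(\cF)=H^\bullet(S^\mu,s_\mu^!\cF)$, the hyperbolic stalk for the generic dominant $\Cx\subset T^\vee$ whose attracting sets are the semi-infinite orbits $S^\mu$. The heart of the argument is the Mirković--Vilonen dimension estimate: $S^\mu\cap\overline{\Gr^\lambda}$ has dimension bounded by the expected linear function of $\lambda+\mu$, its top-dimensional components being the MV cycles. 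This forces $F_\mu(\IC_{\overline{\Gr^\lambda}})$ to be concentrated in a single cohomological degree depending only on $\mu$ (the $\Gr$-analogue of Theorem \ref{th:hypess}); concentration in one degree for every simple object makes each $F_\mu$, hence $F$, exact, and faithfulness follows because the costalk of a nonzero perverse sheaf along the lowest relevant $S^\mu$ is nonzero. Parity vanishing of the $\IC$-stalks also kills $\operatorname{Ext}^1$ between simples, so $P_{G^\vee(\cO)}(\Gr)$ is semisimple. Upgrading $F$ to a monoidal functor then uses the fusion picture. \textbf{I expect the main obstacle to be the sign/commutativity subtlety.} One must twist $F$, equivalently modify the commutativity constraint, by the parity of $\langle 2\rho,\mu\rangle$, so that reconstruction yields an honest group rather than a supergroup — this is exactly the delicate point in making the fusion construction rigorous.

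\emph{Reconstruction and identification.} Tannakian reconstruction now gives $P_{G^\vee(\cO)}(\Gr)\cong\Rep\, H$ with $F$ the forgetful functor; semisimplicity of the category shows $H$ is reductive, and the grading $F=\bigoplus_\mu F_\mu$ exhibits a maximal torus $T\subset H$ with character lattice $P$, the coweight lattice of $G^\vee$. Treating $\IC_{\overline{\Gr^\lambda}}$ as a highest-weight object of highest weight $\lambda$ and examining the submodule structure of $\IC_{\overline{\Gr^\mu}}\star\IC_{\overline{\Gr^\lambda}}$ pins down the coroots of $H$ as the roots of $G^\vee$, so $H$ is the Langlands dual of $G^\vee$, i.e. $H\cong G$. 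Part (2) follows since $\IC_{\overline{\Gr^\ul}}$ is the twisted external product of the $\IC_{\overline{\Gr^{\la_k}}}$ and $(\pi_\ul)_*$ computes their convolution, which the equivalence sends to $V(\la_1)\otimes\cdots\otimes V(\la_n)$. Part (3) is precisely the statement that $F_\mu$ corresponds under the equivalence to the $\mu$-weight-space functor $V\mapsto V_\mu$, which holds by the construction of $T\subset H$, the single-degree concentration providing the required normalization so that no shift is needed.
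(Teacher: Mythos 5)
Your proposal is a correct outline of the Mirkovi\'c--Vilonen argument (fusion via the Beilinson--Drinfeld Grassmannian for the commutativity constraint, weight functors as hyperbolic stalks on the $S^\mu$ with the dimension estimate forcing single-degree concentration, the parity/sign modification, and Tannakian reconstruction), which is precisely the proof the paper invokes: the survey gives no argument of its own and simply cites \cite{MV}, \cite{L}, \cite{G}. So your approach is essentially the same as the paper's source, and I see no gaps beyond the level of detail appropriate to a sketch.
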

As a corollary of this theorem, we have an isomorphism $ H(\overline{\Gr^\lambda} \cap S^\mu) \cong V(\lambda)_\mu $.

In order to put this in the context of symplectic resolutions, we need to work with affine Grassmannian slices.  For $ \mu \in P_+ $, we define the transversal slices to the spherical Schubert varieties by
$$
\cW_\mu := G^\vee_1[t^{-1}] t^\mu \text{ where } G^\vee_1[t^{-1}] = \ker (G^\vee[t^{-1}] \xrightarrow{t^{-1} \mapsto 0} G^\vee)
$$
Then we set $ \oW^\la_\mu := \overline{\Gr^\la} \cap \cW_\mu $ and $ \cW^{\ul}_\mu := \Gr^\ul \cap \pi_\ul^{-1} (\cW_\mu)$.  Here, as before, $ \lambda = \lambda_1 + \dots + \lambda_n $ and $ \mu \le \lambda $.

\begin{Example}
	We can realize the nilpotent cone of $ \ssl_n $ as an affine Grassmannian slice, following an observation originally due to Lusztig \cite{L}.  Take $ G^\vee = SL_n $ and consider the map 
	$$\cN_{\ssl_n} \rightarrow G^\vee_1[t^{-1}] \quad A \mapsto I + t^{-1} A  $$
	This map defines an isomorphism between $ \cN_{\ssl_n} $ and $ \oW_0^{n\omega_1}$ and can be lifted to an isomorphism $ T^* Fl_n \cong \oW^{\omega_1, \dots, \omega_1}_0 $.
	
	This isomorphism restricts to an isomorphism between nilpotent orbit closures and affine Grassmannian slices. More generally, we can realize any $ SL_n $ affine Grassmannian slice as a type A Slodowy slice, via the Mirkovic-Vybornov isomorphism \cite{MVy}.
\end{Example}

The following results can be found in \cite{KWWY}.
\begin{Theorem} \label{th:Wsymp}
	Assume that all $ \la_k $ are minuscule and $ \mu \in P_+$.
	\begin{enumerate}
\item		There is a Poisson structure on $ \cW_\mu $ coming from the Manin triple $ (\fg\otimes \cK,\fg\otimes \cO, \fg \otimes t^{-1}\C[t^{-1}]) $ and each $ \oW^\la_\mu $ is a Poisson subvariety.
		\item $\pi: \cW^\ul_\mu \rightarrow \oW^\la_\mu $ is a symplectic resolution.
		\item It carries a conical $ \Cx $ action (by ``loop rotation'', acting on the variable $ t $) and a Hamiltonian $ T^\vee $ action with finitely many 
		fixed points.
		\item The symplectic leaves of $ \oW^\la_\mu $ are $ \cW^\nu_\mu $ for $ \mu \le \nu \le \la $.  The transversal slice to $ \cW^\nu_\mu $ is $ \oW^\lambda_\nu $.
		\item We have $ (\oW^\la_\mu)^+ = \overline{\Gr^\lambda} \cap S^\mu$.
	\end{enumerate}
\end{Theorem}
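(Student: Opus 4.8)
The plan is to work entirely inside the affine Grassmannian $\Gr = G^\vee(\cK)/G^\vee(\cO)$, regarded as an ind-scheme, and to transport its structures onto the slices, essentially following \cite{KWWY}. For part (1), the key input is that the Manin triple $(\fg\otimes\cK,\ \fg\otimes\cO,\ \fg\otimes t^{-1}\C[t^{-1}])$, with pairing the residue of the Killing form, makes $G^\vee(\cK)$ a Poisson ind-group and $\Gr$ a Poisson homogeneous space. The congruence subgroup $G^\vee_1[t^{-1}]$ is pro-unipotent and compatible with this structure, so its orbit $\cW_\mu = G^\vee_1[t^{-1}]t^\mu$ (which is isomorphic to $G^\vee_1[t^{-1}]$ itself) inherits a Poisson structure. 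One then checks that $\overline{\Gr^\la}$, being a union of $G^\vee(\cO)$-orbits, is Poisson in $\Gr$ and meets $\cW_\mu$ transversally in the Poisson sense, so that $\oW^\la_\mu = \overline{\Gr^\la}\cap\cW_\mu$ is a Poisson subvariety which is generically symplectic.

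For part (2) minusculeness enters decisively. When each $\la_k$ is minuscule, $\Gr^{\la_k} = \overline{\Gr^{\la_k}}$ is a smooth projective $G^\vee(\cO)$-variety (a partial flag variety of $G^\vee$), so the convolution space $\Gr^\ul$ is an iterated fibre bundle over these, hence smooth; since $\cW_\mu$ is a global transverse slice to $\Gr^\mu$, the intersection $\cW^\ul_\mu = \Gr^\ul\cap\pi_\ul^{-1}(\cW_\mu)$ is smooth of the expected dimension and carries a symplectic form, obtained by restriction from the open dense symplectic locus of $\Gr^\ul$ (or, alternatively, from a Hamiltonian-reduction / Coulomb-branch presentation). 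The base $\oW^\la_\mu$ is affine because $\cW_\mu$ is a pro-affine space and $\overline{\Gr^\la}$ is closed, and it is normal since $\overline{\Gr^\la}$ is normal and the slice is transverse. The map $\pi$ is proper because $\pi_\ul\colon\Gr^\ul\to\Gr$ is ind-proper and $\oW^\la_\mu$ is of finite type; it is an isomorphism over the open stratum, hence birational; and it is $G^\vee(\cO)$-equivariant, hence Poisson. The one genuinely delicate point is surjectivity of $\pi$ onto $\oW^\la_\mu$: this holds precisely because $\mu$ is dominant (paralleling the quiver-variety phenomenon of Example~\ref{eq:sl2}, where dominance of $\mu$ is exactly the surjectivity condition), and one must show every point of $\overline{\Gr^\la}\cap\cW_\mu$ arises as $L_n$ of a polyline, which reduces to factoring the distance $\nu$ of that point as a sum along $\la_1,\dots,\la_n$ while staying inside $\cW_\mu$.

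For part (3), take loop rotation normalized as $t\mapsto s^{-1}t$; it acts on $G^\vee_1[t^{-1}]$ by scaling the coefficient of $t^{-k}$ with weight $k>0$ and fixes the coset $t^\mu$ (since $s^{-\mu}\in T^\vee\subset G^\vee(\cO)$), so it contracts $\cW_\mu$, and hence $\oW^\la_\mu$, to the point $t^\mu$, and (after rescaling) it scales the symplectic form with weight $2$, giving the conical structure. The torus $T^\vee$ acts on $\Gr$ by left translation, preserves $\Gr^\ul$ and fixes $t^\mu$, hence acts on $\cW^\ul_\mu$ by Poisson automorphisms; this action is Hamiltonian with moment map valued in $(\ft^\vee)^*$, read off from the Lie-bialgebra structure. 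Its fixed points are the tuples $(t^{\nu_1},\dots,t^{\nu_n})$ with $\nu_0=0$, $\nu_n=\mu$, and each $\nu_k-\nu_{k-1}$ a weight of $V(\la_k)$ — finitely many because the minuscule representations are finite-dimensional.

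For parts (4) and (5), the $G^\vee(\cO)$-orbits $\Gr^\nu$ with $\mu\le\nu\le\la$ ($\nu$ dominant) stratify $\overline{\Gr^\la}$, and intersecting with $\cW_\mu$ yields the symplectic-leaf stratification, the leaves being $\cW^\nu_\mu = \Gr^\nu\cap\cW_\mu$; the local structure of $\Gr$ near $t^\nu$ — the fact that $\cW_\nu$ is a transverse slice to $\Gr^\nu$ inside $\overline{\Gr^\la}$ — identifies the transversal slice to the leaf $\cW^\nu_\mu$ with $\oW^\la_\nu$, consistently with Theorem~\ref{th:Msymp}(2) under symplectic duality. For (5), a generic dominant cocharacter $\Cx\subset T^\vee$ has attracting set $S^\mu=U^\vee(\cK)t^\mu$ in $\Gr$, so $(\oW^\la_\mu)^+ = \oW^\la_\mu\cap S^\mu = \overline{\Gr^\la}\cap\cW_\mu\cap S^\mu$, and the equality with $\overline{\Gr^\la}\cap S^\mu$ follows once one checks $\overline{\Gr^\la}\cap S^\mu\subseteq\cW_\mu$ via an Iwasawa-type factorization of $U^\vee(\cK)$ compatible with $G^\vee_1[t^{-1}]$. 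The main obstacle is part (2): simultaneously pinning down smoothness of $\cW^\ul_\mu$, non-degeneracy of the restricted symplectic form, and — the truly substantive point — surjectivity of $\pi$, which is where dominance of $\mu$ is unavoidable; once (1) and (2) are secured, parts (3)–(5) are essentially bookkeeping with standard affine Grassmannian geometry.
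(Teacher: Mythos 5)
The paper gives no proof of this theorem at all --- it simply cites \cite{KWWY} --- and your sketch follows exactly the route of that reference: the Manin-triple Poisson structure on $\Gr$, smoothness of the convolution space when all $\la_k$ are minuscule, loop rotation for the conical action, left translation by $T^\vee$ for the Hamiltonian action with the polyline description of the fixed points, the $G^\vee(\cO)$-orbit stratification for the leaves, and the containment $\overline{\Gr^\la}\cap S^\mu\subseteq \cW_\mu$ for part (5). So the overall approach is the right one.

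One correction to your assessment of where the difficulty lies. Surjectivity of $\pi$ is not the substantive point: the convolution morphism $\pi_\ul:\Gr^\ul\to\Gr$ has image $\overline{\Gr^{\la_1}}\cdots\overline{\Gr^{\la_n}}=\overline{\Gr^\la}$ (the triangle inequality for $d$, or geometric Satake), and since $\cW^\ul_\mu=\pi_\ul^{-1}(\oW^\la_\mu)$, surjectivity onto $\oW^\la_\mu$ is automatic and has nothing to do with dominance of $\mu$. Dominance enters elsewhere: it guarantees that $[t^\mu]$ lies in the stratum $\Gr^\mu$ of $\overline{\Gr^\la}$ rather than $\Gr^{\mu^{dom}}$, so that $\cW_\mu$ is a transverse slice based at the correct stratum, and it is what makes the containment $\overline{\Gr^\la}\cap S^\mu\subseteq\cW_\mu$ in part (5) true. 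For non-dominant $\mu$ the naive intersection $\overline{\Gr^\la}\cap\cW_\mu$ is simply the wrong object, which is why section \ref{se:nondominant} replaces it by the generalized slice; the analogy with Example \ref{eq:sl2} is about the definition of the target, not about surjectivity of the resolution map. The genuinely delicate points in (2) are instead the transversality of $\cW_\mu$ to the $G^\vee(\cO)$-orbits (which gives smoothness of $\cW^\ul_\mu$ and normality and the correct dimension of $\oW^\la_\mu$) and the non-degeneracy of the $2$-form on all of $\cW^\ul_\mu$, not just on the open leaf; you gesture at both but supply neither, and both are carried out in \cite{KWWY}.
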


\begin{figure}
	 \includegraphics[trim=0 560 0 100, clip, width=1.1\textwidth, angle=0]{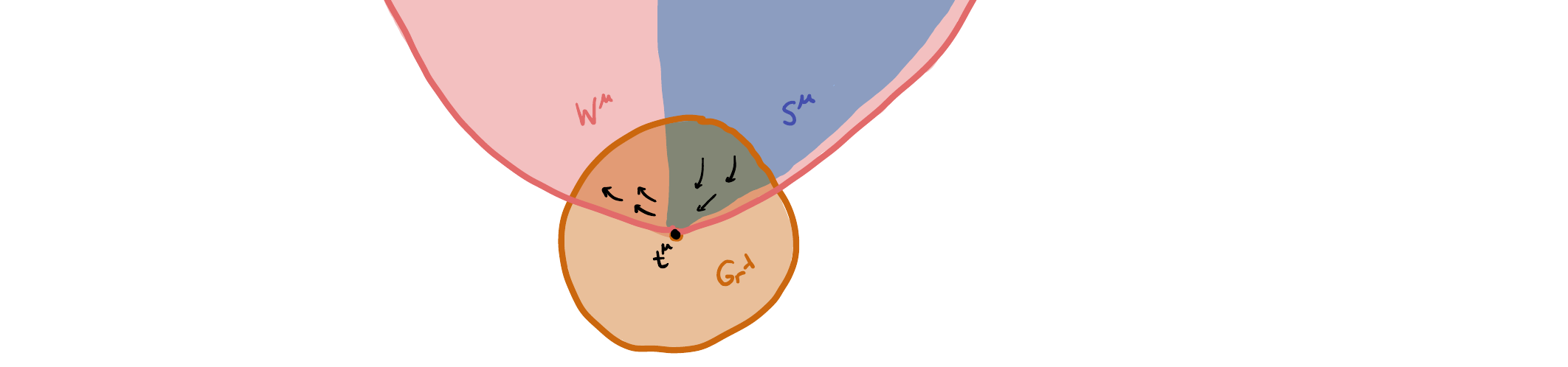} 
\caption{The affine Grassmannian slice $ \oW^\la_\mu $ contains the MV locus $ \overline{\Gr^\lambda} \cap S^\mu $ as its attracting locus.}
\end{figure}

As a consequence of Theorems \ref{th:Satake} and \ref{th:Wsymp}, we have the following result.

\begin{Theorem} \label{th:SatakeSlice}
		There is an isomorphism $  H((\cW^\ul_\mu)^+) \cong V(\ul)_\mu $ fitting into the following diagram, where the first row is (\ref{eq:fund}) and the second row is (\ref{eq:tensorDecomp}).
		\begin{equation}
			\begin{tikzcd}
				H((\cW^\ul_\mu)^+) \arrow[equal]{r} \arrow{d}{\sim} & \bigoplus_{\nu} H((\oW^\nu_\mu)^+) \otimes  H(m^{-1}(t^\nu))  \phantom{h} \arrow[d,"\sim", shift left=10] \arrow[d,"\sim",shift left = -15] \\
				V(\ul)_\mu \arrow[equal]{r} &  \bigoplus_{\nu} \phantom{h} V(\nu)_\mu \otimes \Hom_\fg(V(\nu), V(\ul)) 
			\end{tikzcd} 
		\end{equation}
\end{Theorem}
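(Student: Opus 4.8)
The plan is to read off both rows of the diagram, and their identification, from geometric Satake (Theorem~\ref{th:Satake}) combined with the fact (Theorem~\ref{th:Wsymp}) that $\pi\colon\cW^\ul_\mu\to\oW^\la_\mu$ is a conical symplectic resolution whose leaves and attracting locus are described in terms of the affine Grassmannian. The bridge between the two sides is the principle that hyperbolic restriction on $\Gr$ is compatible with restriction to the transversal slice $\cW_\mu$: on $\Gr$ one has a purely sheaf-theoretic avatar of (\ref{eq:tensorDecomp}), and pulling it back along $\cW_\mu$ produces the hyperbolic BBD decomposition (\ref{eq:fund}) for $\pi$.

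\textbf{The decomposition on $\Gr$.} Since all $\la_k$ are minuscule, each $\Gr^{\la_k}$ is closed and smooth, so $\Gr^\ul$ is smooth and $\IC_{\overline{\Gr^\ul}}=\C_{\Gr^\ul}[\dim\Gr^\ul]$. The convolution morphism $\pi_\ul$ is semismall, so the decomposition theorem gives $(\pi_\ul)_*\IC_{\overline{\Gr^\ul}}\cong\bigoplus_\nu \IC_{\overline{\Gr^\nu}}\otimes H(\pi_\ul^{-1}(t^\nu))$ with $\nu$ running over $\mu\le\nu\le\la$. By Theorem~\ref{th:Satake}(2) this corresponds to $V(\ul)\cong\bigoplus_\nu V(\nu)\otimes\Hom_\fg(V(\nu),V(\ul))$, identifying in particular the multiplicity space $\Hom_\fg(V(\nu),V(\ul))$ with $H(\pi_\ul^{-1}(t^\nu))$. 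Applying the hyperbolic stalk functor $\mathcal F\mapsto H^\bullet(S^\mu, s_\mu^!\mathcal F)$, which by Theorem~\ref{th:Satake}(3) computes $\mu$-weight spaces, yields exactly (\ref{eq:tensorDecomp}): $V(\ul)_\mu=\bigoplus_\nu V(\nu)_\mu\otimes\Hom_\fg(V(\nu),V(\ul))$.

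\textbf{Transfer to the slice.} Using $\cW^\ul_\mu=\pi_\ul^{-1}(\cW_\mu)\cap\Gr^\ul$ and $\oW^\la_\mu=\overline{\Gr^\la}\cap\cW_\mu$, proper base change shows that, up to the cohomological shift accounting for the codimension of $\cW_\mu$, the sheaf $(\pi_\ul)_*\IC_{\overline{\Gr^\ul}}$ restricts on $\oW^\la_\mu$ to $\pi_*\C_{\cW^\ul_\mu}[2d]$, and termwise $\IC_{\overline{\Gr^\nu}}$ restricts to $\IC_{\oW^\nu_\mu}$ (since $\oW^\nu_\mu$ is the transversal slice to the leaf $\cW^\nu_\mu$, Theorem~\ref{th:Wsymp}(4), so $\IC$ restricts to $\IC$). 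The key lemma is that, for sheaves constructible along the $\Gr^\la$-strata, hyperbolic restriction on $\Gr$ along $S^\mu$ agrees with the hyperbolic stalk functor $\Phi$ of section~\ref{se:top} on $\oW^\la_\mu$ precomposed with restriction to $\cW_\mu$; this rests on $(\oW^\la_\mu)^+=\overline{\Gr^\la}\cap S^\mu$ (Theorem~\ref{th:Wsymp}(5)) together with a base-change statement for hyperbolic localization. Granting the lemma, the left column follows: $V(\ul)_\mu=H^\bullet(S^\mu, s_\mu^!(\pi_\ul)_*\IC_{\overline{\Gr^\ul}})=\Phi(\pi_*\C_{\cW^\ul_\mu}[2d])=H((\cW^\ul_\mu)^+)$, the last step being the computation of section~\ref{se:top}. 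Applying the lemma termwise, $\Phi(\IC_{\oW^\nu_\mu})=H((\oW^\nu_\mu)^+)$ by Theorem~\ref{th:hypess}, which under Satake is $V(\nu)_\mu$; and the multiplicity factor $H(\pi_\ul^{-1}(t^\nu))=H(m^{-1}(t^\nu))=H(F_\nu)$ (the fiber of the convolution map over $t^\nu$ is unchanged by the slice, and $t^\nu$ is the cone point of the transversal slice $\oW^\la_\nu$) matches $\Hom_\fg(V(\nu),V(\ul))$. By naturality of the decomposition theorem these identifications are compatible, so the hyperbolic BBD decomposition of $H((\cW^\ul_\mu)^+)$ goes over term by term to (\ref{eq:tensorDecomp}), which is the asserted commuting square.

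\textbf{Main obstacle.} The crux is the compatibility lemma, i.e. that hyperbolic restriction on $\Gr$ factors through restriction to the slice $\cW_\mu$. The subtlety is that $\cW_\mu$ is not literally stable under loop rotation, since $t^\mu$ is moved by an element of $T^\vee$; one must therefore work with a generic one-parameter subgroup of $T^\vee\times\Cx$ and check that $\cW_\mu$ is stable under it, that $\overline{\Gr^\la}\cap S^\mu\subseteq\cW_\mu$ so that no attracting directions are lost when passing to the slice, and that the induced stratification of $S^\mu\cap\cW_\mu$ is the expected one by the $\cW^\nu_\mu$. These are structural facts about the slice constructions of \cite{KWWY}, and the base-change itself is a form of Braden's theorem on hyperbolic localization; assembling them carefully is where the real work lies, the rest being bookkeeping of shifts and normalizations.
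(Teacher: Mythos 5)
Your proposal is correct and follows essentially the same route the paper intends: the paper offers no written proof, simply asserting the theorem "as a consequence of Theorems \ref{th:Satake} and \ref{th:Wsymp}", and your argument is precisely the natural way to combine them — geometric Satake supplies the decomposition and its weight-space avatar on $\Gr$, and the structural facts of Theorem \ref{th:Wsymp} (leaves $\cW^\nu_\mu$, transversal slices $\oW^\lambda_\nu$, and $(\oW^\la_\mu)^+ = \overline{\Gr^\la}\cap S^\mu$) transfer it to the hyperbolic BBD decomposition of the slice. You have also correctly isolated the one genuine technical point (compatibility of hyperbolic restriction on $\Gr$ with restriction to $\cW_\mu$), which the survey leaves implicit.
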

For the generalization to non-dominant $ \mu$, we will need the generalized affine Grassmannian slices, see section \ref{se:nondominant}.

A Poisson deformation of $ \oW^\la_\mu $ can be constructed using the Beilinson-Drinfeld Grassmannian \cite{KWWY}.  Quantizations of $ \oW^\lambda_\mu $ are constructed using \textbf{truncated shifted Yangians}, defined in \cite{KWWY}.  For each $ \mu \in P_+$, there is an subalgebra $ Y_\mu \subset Y $ of the Yangian, quantizing $ \cW_\mu $.  Then there is a quotient $ Y^\lambda_\mu $ of $ Y_\mu$ which quantizes $ \C[\oW^\la_\mu] $.  This quotient is defined as the image of $ Y_\mu $ in a certain algebra of difference operators (conjecturally, we have an explicit presentation).

\begin{Example}
Let us take a look at Example \ref{eq:sl2} from the affine Grassmannian slices viewpoint. Then $ \oW^\la_\mu = \C^2/\Z_n $ and $ \cW^{\ul}_\mu = \widetilde{\C^2/\Z_n} $ and the hyperbolic BBD decomposition is given in the first part of Example \ref{eg:fund}.  In particular, the $n-1$ $\PP^1$s in the core are responsible for $ \Hom_{\ssl_2}(V(n-2), (\C^2)^{\otimes n}) $.
\end{Example}

\subsection{Symplectic duality between quiver varieties and slices}
We will now check the different aspects of the symplectic duality.  We fix $ \lambda, \mu \in P_+ $ and write $ Y = M(\la, \mu), Y^! = \cW^\ul_\mu $ for some decomposition $ \lambda = \lambda_1 + \dots + \lambda_n $ where all $ \lambda_k $ are minuscule.  As in section \ref{se:sdhv}, this decomposition of $ \lambda $ determines the resolution $ Y^! $ and determines the choice of $ \Cx \subset T $ acting on $ Y $.

\subsubsection{Hamiltonian torus and second cohomology}
First, recall that we have $ (\Cx)^{\sum w_i} $ acting on $ M(\la,\mu)$.  This action is not necessarily faithful (in particular the diagonal $ \Cx $ acts trivially) and we write $ T $ for its image in the automorphism group of $ M(\la,\mu) $.   So $ \ft $ is a quotient of $ \C^n$.  (Here as before $ n $ is the size of the list $ \ul $ and also equals $ \sum w_i $.)

On the other hand, by definition, we have an embedding $ \cW^\ul_\mu \subset \Gr^n $ and thus a map
$$
\C^n = H^2(\Gr)^{\oplus n}= H^2(\Gr^n) \rightarrow H^2(\cW^\ul_\mu) $$
Combining this should establish $ \ft \cong H^2(\cW^\ul_\mu) $.

The reverse direction is simpler.  We have a Kirwan map $ \C^I \rightarrow H^2(M(\la,\mu))$ which is an isomorphism (as long as $ v_i \ne 0 $ for all $ i$). Also, we have an action of $ (\Cx)^I = T^! $ on $ \cW^\ul_\mu $.  This gives us the desired isomorphism $ \ft^! \cong H^2(M(\la, \mu)) $, establishing (\ref{eq:sdT}) in both directions.

\subsection{Matching of strata and cycles}
From Theorems \ref{th:Msymp} and \ref{th:Wsymp}, the symplectic leaves of $ M_0(\la, \mu) $ and $ \oW^\la_\mu $ are both labelled by dominant $ \nu $ such that $ \mu \le \nu \le \la $.  So we get the desired bijection between leaves, which is seen to be order-reversing.  Moreover, the symplectic leaves of $ M_0(\la,\mu) $ are $ M_0(\la, \nu)^{reg} $ and the transversal slices in $ \oW^\la_\mu $ are $ \oW^\la_\nu $, and  we have the symplectic duality between $ M(\la, \nu) $  and $ \cW^\ul_\nu $.  This establishes (\ref{eq:sdS}).

The irreducible components of $ (\oW^\la_\mu)^+ = \overline{\Gr^\la} \cap S^\mu  $ are known as Mirkovic-Vilonen cycles \cite{MV}.  In \cite{mvcycle}, we constructed a bijection between MV cycles and a collection of polytopes, known as MV polytopes.  On the other hand, in \cite{BK}, we constructed a bijection between these same polytopes and the components of the cores of Nakajima quiver varieties.  Combining these two bijections leads to a bijection
$$
\Irr F_0 \cong \Irr (\oW^\la_\mu)^+ 
$$
The reverse direction $ \Irr M(\la, \mu)^+ \cong F^!_0 $ is not as well-developed.

\subsection{Categorical} \label{se:KDslices}
As the quiver variety $ M(\la, \mu) = T^* N \ssslash_{0,\chi} G $ is constructed by Hamiltonian reduction, any quantization $A_\theta$ can be constructed by quantum Hamiltonian reduction.  Thus, the category of $A_\theta$-modules can be described using a category of $ G$-equivariant $ D_N$-modules (see \cite[Prop 2.6]{W1409}).  This allows one to study these modules using the methods of $ D$-modules.  In particular, Webster \cite[Theorem A']{W1409} proved that (for generic integral $\theta$) the graded lift $\tilde \cO$ of category $ \cO $ for $ A_\theta$ is equivalent to the graded category
	of linear projective complexes over an explicit diagrammatic algebra, called a Khovanov-Lauda-Rouquier-Webster algebra $ T^\la_\mu$. Moreover, he proved that this category is Koszul.
	
 On the other hand, the quantization of an affine Grassmannian slice $ \oW^\la_\mu $ is given by the truncated shifted Yangian $Y^\la_\mu$. In \cite{KTWWY}, we proved by an explicit algebraic computation that $ \Mof{T^\la_\mu}$ is equivalent to the category $\cO $ for $ Y^\la_\mu$. Combining all these results yields the desired Koszul duality (\ref{eq:sdO}), see \cite[Corollary 1.3]{KTWWY}.

\subsection{Hikita}
In \cite{moncrystals}, we constructed an isomorphism 
$$ H^\bullet(M(\la, \mu)) \cong \C[(\cW^\lambda_\mu)^{\Cx}]$$
which verifies the original (non-equivariant) Hikita statement.  To prove this, we identified both sides with the weight space of a representation of a current Lie algebra $ \fg^\vee[[t]]$.

On the other hand, let us consider the full equivariant version (\ref{eq:sdH})
$$ H^\bullet_{T \times \Cx}(M(\la, \mu)) \cong B(Y^\lambda_\mu)$$
  In \cite{moncrystals}, we proved that, up to nilpotents, this is equivalent to the highest weights for $ Y^\lambda_\mu $ being given by the combinatorics of the product monomial crystals.  In \cite{KTWWY}, we proved this statement about the highest weights, and thus (\ref{eq:sdH}) is almost known.
  
  The reverse direction, relating $ H^\bullet(\cW^\ul_\mu) $ with the $B$-algebra of the quiver variety, has not been explicitly studied in the literature.

\section{Coulomb branches of 3d gauge theories} \label{se:CB}
In general, there is no obvious geometric relationship between two symplectic dual varieties, nor any obvious way to construct one from the other.  However, when one symplectic resolution is constructed by Hamiltonian reduction, then Braverman-Finkelberg-Nakajima \cite{BFN1} gave a method for constructing the symplectic dual, motivated by quantum field theory.

\subsection{The Coulomb branch of 3d gauge theory}
Fix a reductive group $ G $ (the ``gauge group'') and a representation $ N$ (the ``matter'').  This data defines a 3d $N=4$ supersymmetric gauge theory.  This gauge theory has various moduli of vacua, one of which is the Higgs branch, which is the Hamiltonian reduction of $ T^*N $, as explained in section \ref{se:Higgs}.

Another modulus of vacua is called the Coulomb branch, denoted $ M_C(G,N)$.  As noted in section \ref{se:3dmirror}, the Higgs and Coulomb branches are expected to be symplectic dual.  The Coulomb branch is expected to have the structure of a hyperk\"ahler manifold, so from the algebraic geometry perspective it should be a complex symplectic variety.  

\subsection{The BFN Coulomb branch}
A mathematical definition of $M_C(G,N)$ was given by Braverman-Finkelberg-Nakajima \cite{BFN1}.  Their approach is to first define an algebra $ \bar A(G,N) $ and then define the Coulomb branch as $\Spec \bar A(G,N) $.  This work was partially inspired by the work of Cremonesi-Hanany-Zaffaroni \cite{CHZ}, who gave a physics computation of the Hilbert series of $\bar A(G,N) $. For further explanation of the physics motivation, see \cite{NakBFN}.

Let $ D = \Spec \cO $, be the formal disk, and $ \Dx = \Spec \cK $, the formal punctured disk.  We consider the non-separated curve $ \BB := D \cup_{\Dx} D $, sometimes called the \textbf{raviolo} or \textbf{bubble}.  Then we define 
\begin{equation} \label{eq:CoulombQuickDef}
\bar A(G,N) := H_\bullet(\Maps(\BB, [N/G])) 
\end{equation}
as the homology of the moduli space of maps from the raviolo to the stack quotient $ [N/G]$.  This homology group will carry a convolution algebra structure related to stacking of ravioli. A map to $ [N/G] $ is equivalent to a principal $ G $ bundle $ \cP$ on $ \BB $ along with a section of the associated $ N $ bundle $ N_\cP := \cP \times^G N $. 

	\begin{figure}
	\includegraphics[trim=210 310 230 55, clip,width=0.3\textwidth]{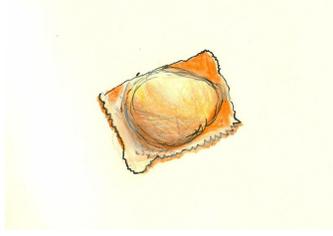}
	\caption{The raviolo curve is formed by gluing two copies of the formal disk along the punctured disk.}
\end{figure}

We can unpack this definition by trivializing our bundle over one disk.  First we consider the moduli space
\begin{align*}
T_{G,N} :&= \{(\cP, \varphi, s) : \cP \text{ is a principal $ G $-bundle on $ D $},\\ 
&\phantom{12345} \varphi : \cP|_{\Dx} \rightarrow \cP^0|_{\Dx} \text{ is a trivialization and $ s \in \Gamma(D, N_\cP)$} \} \\
&= \{([g], v) \in \Gr \times N\otimes \cK : v \in gN \otimes \cO \}
\end{align*}
So $ T_{G,N} $ is a vector bundle over the affine Grassmannian with fibre over $ [g] $ given by $ g N \otimes \cO $.  

We have a natural map $ T_{G,N} \rightarrow N\otimes \cK $, which is analogous to the Springer resolution $ T^* G/B \rightarrow \mathcal N $.  Building on this analogy, it is natural to form the fibre product $ T_{G,N} \times_{N \otimes \cK} T_{G,N} $, in order to define a convolution algebra.  To reduce the infinite dimensionality of this situation, Braverman-Finkelberg-Nakajima take a slightly different approach and consider
$$
R_{G,N} :=  \{([g], v) \in T_{G,N} : v \in N \otimes \cO \}
$$
The map $ \pi : R_{G,N} \rightarrow \Gr$ is no longer a vector bundle, but still restricts to vector bundle over each $ \Gr^\lambda $.

\begin{Example}
	Take $ G = GL_n$ and $ N = \C^n$.   Then we can identify the above players as follows.
	\begin{gather*}
		\Maps(\BB, [N/G]) = \{(\cV, s) : \text{ $\cV $ is a rank $ n $ vector bundle on $ \BB $}, s \in \Gamma(\BB, \cV) \} \\
		\Gr_G = \{ L \subset \cK^n : L \text{ is a $ \cO$ lattice } \} \\
		T_{G,N} = \{ (L, v) : L \in \Gr, v \in L \} \\
		R_{G,N} = \{ (L, v) : L \in \Gr, v \in L \cap \cO^n \}
	\end{gather*}
\end{Example}
		
From the modular viewpoint, $ R_{G,N} $ is the moduli space of triples $ (\cP, \varphi, s) $ as in the definition of $ T_{G,N}$, except that $ s $ is required to be a common section of $N_\cP $ and the trivial bundle $N \otimes \mathcal O_D $.  Hence, the space $R_{G,N}$ has an action of $G(\cO) $ and the mapping space $\Maps(\BB, [N/G]) $ can be identified as the quotient $R_{G,N} / G(\cO)$. Thus, as a more precise version of (\ref{eq:CoulombQuickDef}), we define
$$
\bar A(G,N) := H^\bullet_{G(\cO)}(R_{G,N}, \omega_R)
$$
where $ \omega_R $ is a renormalized dualizing sheaf.  The effect of this dualizing sheaf is that we work with cycles that are finite-dimensional along $ \Gr $ and infinite-dimensional along the fibres.  For example, we can consider $ r_\lambda = [R^\lambda] $ where $ R^\lambda$ is the closure of the preimage of $ \Gr^\lambda $ ($r_\lambda $ is called a \textbf{monopole operator} in the physics literature \cite{CHZ}). 

Braverman-Finkelberg-Nakajima \cite{BFN1} prove the following result by adapting previous work of Bezrukavnikov-Finkelberg-Mirkovic \cite{BFM} on the equivariant homology of the affine Grassmannian.
\begin{Theorem}
	Using a convolution diagram, $ \bar A(G,N) $ carries the structure of a commutative algebra. It is a finitely-generated, integrally closed, integral domain.
\end{Theorem}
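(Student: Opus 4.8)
The plan is to follow the strategy of Braverman-Finkelberg-Nakajima \cite{BFN1}, who adapt the analysis of $H^\bullet_{G(\cO)}(\Gr)$ by Bezrukavnikov-Finkelberg-Mirkovi\'c \cite{BFM} from the case $N=0$ to general $N$. There are essentially three things to do: build the convolution product, prove commutativity, and establish the ring-theoretic properties.

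\emph{Convolution product.} Mimicking the convolution diagram for the equivariant homology of $\Gr$, I would introduce the two-step space $\widetilde{R}:=G(\cK)\times^{G(\cO)}R_{G,N}$ together with its three maps to $R_{G,N}$: the projection $p$ which forgets the $G(\cK)$-factor after trivializing, the map $q$ which records only the translated section, and the multiplication $m$. The product of classes is $a\star b:=m_*(p^*a\cdot q^*b)$, where throughout one works with the renormalized dualizing sheaf $\omega_R$. This renormalization is precisely the device that keeps cycles finite-dimensional in the $\Gr$-direction while permitting the (pro-)vector-bundle fibres to be infinite-dimensional, so that $m_*$, $p^*$, $q^*$ are all well-defined and the $G(\cO)$-equivariance is preserved. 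Associativity then follows formally from the compatibilities of the correspondences, exactly as for $\Gr$.

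\emph{Commutativity.} This is the main obstacle, and it forces the Beilinson-Drinfeld (factorization) structure of the affine Grassmannian into the argument. I would replace the single formal disk by a smooth curve $C$ carrying two moving points, and form a Beilinson-Drinfeld version $R_{G,N,C^2}\to C^2$ whose fibre over a pair of distinct points is a bundle over $\Gr_{C,x_1}\times\Gr_{C,x_2}$ and whose fibre over the diagonal is a bundle over the single-point Grassmannian $\Gr_{C,x}$. The convolution product globalizes over this family; restricting to the diagonal recovers $\star$, while away from the diagonal the product is visibly symmetric in $x_1,x_2$. Since $C^2$ is connected and the relevant homology is locally constant over it, one concludes $a\star b=b\star a$. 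The technical heart lies in constructing $R_{G,N,C^2}$, checking that the renormalized dualizing sheaves are compatible across the family, and verifying that the product varies locally constantly; this is exactly the step where \cite{BFM} is invoked and extended beyond $N=0$.

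\emph{Ring-theoretic properties.} For finite generation, stratify $R_{G,N}$ by the closed subvarieties $R^\lambda$ lying over $\overline{\Gr^\lambda}$; since $R_{G,N}$ restricts to a vector bundle over each stratum $\Gr^\lambda$ and $\overline{\Gr^\lambda}$ has finite-dimensional homology, $H^\bullet_{G(\cO)}(R^\lambda,\omega_R)$ is finitely generated over $H^\bullet_{G(\cO)}(\mathrm{pt})=\C[\ft]^W$ by ``dressed'' monopole classes, and since the dominant coweights form a finitely generated monoid while the leading term of $r_\lambda\star r_\mu$ is a class supported on $R^{\lambda+\mu}$, one sees that $\bar A(G,N)$ is generated over the polynomial ring $\C[\ft]^W$ by finitely many dressed monopole operators. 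For the integral domain property, the localization theorem in $T(\cO)$-equivariant homology produces an injective ``abelianization'' homomorphism from $\bar A(G,N)$ into the abelianized ring attached to the maximal torus $T$ and the representation $N$; that ring is a subring of $\mathrm{Frac}(\C[\ft])\otimes\C[\text{cocharacter lattice of }T]$ with a twisted multiplication, hence a domain, so $\bar A(G,N)$ is a domain. Finally, integral closedness follows from Serre's criterion $(R_1)+(S_2)$: the condition $(S_2)$ comes from Cohen-Macaulayness of $\bar A(G,N)$, inherited in the limit from the equivariant formality (freeness over $H^\bullet(BG)$) of the spaces $R^\lambda$, while $(R_1)$ comes from exhibiting a smooth open subset of $\Spec\bar A(G,N)$ -- the generic locus, which is modelled on the smooth torus theory -- whose complement has codimension at least two. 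Putting these together gives normality and completes the proof.
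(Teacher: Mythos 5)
The paper itself does not prove this theorem; it quotes it from Braverman--Finkelberg--Nakajima \cite{BFN1}, so your proposal can only be measured against their argument. Most of your outline faithfully reproduces it: the convolution product via the correspondence through $G(\cK)\times^{G(\cO)}R_{G,N}$ with the renormalized dualizing sheaf; commutativity via a Beilinson--Drinfeld--style family over the square of a curve, symmetric off the diagonal and specializing to $\star$ on it; finite generation via the filtration by the monoid of dominant coweights together with the leading-term computation of $r_\lambda\star r_\mu$ on $R^{\lambda+\mu}$; and integrality via the injection of $\bar A(G,N)$ into the localized abelianized algebra, which is visibly a domain.

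The step that does not survive scrutiny is integral closedness. You cannot route through Serre's criterion via Cohen--Macaulayness: $\bar A(G,N)$ is free of \emph{infinite} rank over $H^\bullet_G(\mathrm{pt})=\C[\ft]^W$, and infinite-rank freeness over a polynomial subring does not give $(S_2)$ of the big ring; indeed Cohen--Macaulayness of Coulomb branches is essentially part of the still-open symplectic singularities conjecture (the paper records Weekes's theorem for quiver gauge theories as the strongest known result), so it cannot be an input here. Your $(R_1)$ step also fails as stated: the locus where the torus-localization model applies is the complement of the root and weight hyperplanes, which has codimension one, not two, so the walls cannot be discarded. The argument in \cite{BFN1} uses the freeness over $\C[\ft]^W$ differently: a free module over a normal domain is reflexive, so inside its fraction field $\bar A(G,N)$ equals the intersection of its localizations at the height-one primes of $\C[\ft]^W$; the generic localization and the localizations at the generic points of the walls are then computed explicitly (reducing to the maximal torus and to rank-one/minimal Levi subgroups) and each is shown to be integrally closed, and an intersection of integrally closed subrings of a field is integrally closed. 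Replacing your normality paragraph with this reflexivity-plus-codimension-one analysis would bring the proposal in line with the BFN proof.
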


We define $ M_C(G,N) = \Spec \bar A(G,N) $.  From the theorem, it is a normal affine variety.  

\begin{Example} \label{eg:AC}
	Fix an integer $ n \ge 1$.	Take $ G = \Cx $ and $ N = \C(n) $, a 1-dimensional vector space on which $ \Cx $ acts by weight $n$.
	
	Then $ \Gr = \cK^\times / \cO^\times = \Z $ and $ R_{G,N} = \bigcup_{k \in \Z}  \{t^k\} \times (\cO \cap t^{nk} \cO)  $.  
	
	Thus, as a vector space, 
	$$ \bar A(G,N) = H_\bullet^{G(\cO)}(R_{G,N}) = \bigoplus_{k \in \Z} \C[w]r_k$$
	where $r_k = [\{t^k\} \times ( \cO \cap t^{nk}\cO)]$ is a monopole operator, and where $ \C[w] = H^\bullet_{\Cx}(pt) $. 
	
	So it remains to understand the multiplication.  Examining the definition of the convolution diagram, we see that $ r_k r_l $ is determined by the
 	intersection 
$$	(\cO \cap t^{nk}\cO) \cap (t^{nk} \cO \cap t^{n(k+l)} \cO)$$
inside $ t^{nk} \cO $.  This intersection 
is transverse when $ k, l$ have the same sign.  If $ k < 0, l > 0 $, we get an excess intersection of $ t^{nk} \cO / \cO + t^{n(k+l)} \cO $ and if $ k > 0, l < 0 $, the intersection is smaller than the expected space $ \cO \cap  t^{n(k+l)} \cO $.  In either case, this leads to an equivariant factor $ w^{np} $ where $ p = \min(|k|, |l|) $.  So the multiplication is given by
$$ r_k r_l = \begin{cases} r_{k+l} \text{ if $ k l \ge 0 $ } \\
	w^{np} r_{k+l} \text{ if $ kl < 0 $ }
	\end{cases}
$$
Thus we conclude that $ \bar A(G,N) = \C[r_+, r_-, w]/ (r_+ r_- - w^n) $ and thus $ M_C(G,N) = \C^2 / \Z_n$.
\end{Example}

\subsection{Integrable system}
The Coulomb branch possesses one special feature not discussed earlier for a general symplectic resolution.  The map $ H^\bullet_G(pt) \rightarrow \bar A(G,N) $ given by acting on $ r_0 $ gives us a complete integrable system $ M_C(G,N) \rightarrow \fh / W $.  The fibres of this integrable system are easy to analyse.  In particular the general fibre is isomorphic to $ H^\vee $.  This is in line with the physics expectation that $ M_C(G,N)$ is approximated by $ T^* H^\vee/ W$.  (Here $ \fh $ denotes the Cartan subalgebra of $ \fg $ and $ H $ the maximal torus of $ G $.)

\subsection{Quantization and deformation} \label{se:CoulombQuant}
It is very easy to define the quantization of the Coulomb branch.  There is a loop rotation action of $ \Cx $ on $ R_{G,N} $ scaling the variable $ t $.  (In terms of principal $G$-bundles, this comes from $ \Cx $ acting on the disk $ D $.)  Then we define
$$
A(G,N) := H^\bullet_{G(\cO) \rtimes \Cx}(R_{G,N}, \omega_R)
$$
This is a $ \C[\hbar] $ algebra, where $ \C[\hbar] = H^\bullet_\Cx(pt)$.  The  algebra $A(G,N)$ is non-commutative and $ A(G,N) / \hbar A(G,N)  = \bar A(G,N)$.  This endows $ \bar A(G,N)$ with the structure of a Poisson algebra as in section \ref{se:quantize} and thus $ M_C(G,N) $ is a Poisson variety.

The integrable system on $ M_C(G,N) $ mentioned above quantizes to a subalgebra $ H^\bullet_{G\times \Cx}(pt) \subset A(G,N) $ which we call the \textbf{Gelfand-Tsetlin subalgebra}\footnote{It was called the Cartan subalgebra in \cite{BFN1}, but we prefer the name Gelfand-Tsetlin because of Example \ref{eg:usualGT}.}.

\begin{Example}
	We continue Example \ref{eg:AC}.  Because of the non-trivial $ \Cx $ action on $ \cK $, the excess intersection $ t^{-n} \cO / \cO $ carries a non-trivial $ \Cx $ action and we see that in $ A(G, N)$, we have
	$$ r_+ r_- = w(w + \hbar) \cdots (w + (n-1) \hbar)  \quad r_- r_+ = (w - \hbar)  \cdots (w -n \hbar)$$
	In this case, $ \C[w,\hbar] $ is the Gelfand-Tsetlin subalgebra.
\end{Example}

In order to consider the commutative deformation of the Coulomb branch, we assume that we are given an extension $ 1 \rightarrow G \rightarrow \tG \rightarrow T \rightarrow 1 $ and an action of $ \tG $ on $ N $.  Then $ \tG(\cO) $ acts on $ R_{G,N} $ and we can form
$$
\bar \A(G,N) = H^\bullet_{\tG(\cO)}(R_{G,N}, \omega_R) \quad \A(G,N) := H^\bullet_{\tG(\cO) \ltimes \Cx}(R_{G,N}, \omega_R)
$$
The map $ \Spec \bar \A(G,N) \rightarrow \Spec H^\bullet_T(pt) $ provides a deformation of $ M_C(G,N) $ over $ \ft $ and $ \A(G,N) $ is a family of quantizations over the same space.

\begin{Example} \label{eg:usualGT}
	Consider $ G = GL_1 \times \cdots \times GL_{n-1} $ and $ N = \oplus_{k=1}^{n-1} \Hom(\C^k, \C^{k+1})$ as in Example \ref{eg:Anquiver}.  Then we have $ \A(G,N) \cong \tilde U_\hbar \mathfrak{sl}_n $ (this is a special case of \cite[Theorem B.18]{BFN2}).  The Gelfand-Tsetlin subalgebra $ H^\bullet_{G\times \Cx}(pt) $ discussed above coincides with the classical Gelfand-Tsetlin subalgebra of $  \tilde U_\hbar \mathfrak{sl}_n$ (the subalgebra generated by the centres of all $ U_\hbar \mathfrak{sl}_k $ for $ k \le n $).
\end{Example}

\subsection{Conical and Hamiltonian actions} \label{se:actionCoulomb}
In order to define torus actions on the Coulomb branch, we need gradings on $ \bar A(G,N)$.  The first natural grading is the cohomological grading, which should give the conical $ \Cx $-action.  However, for this action to be conical, we need to shift the grading.  For each $ \lambda \in P_+$, there is a certain half-integer $ \Delta(\lambda) $, defined in (2.10) in \cite{BFN1}, and then we shift the grading by placing $ r_\lambda $ in degree $ 2\Delta(\lambda)$.  The pair $ G,N$ is called \textbf{good or ugly} if $ 2\Delta(\lambda) \ge 1 $ for all dominant coweights $ \lambda$.  In this case, this shifted grading is positive and so we get a conical $ \Cx $ action on $ M_C(G,N)$.  (In the ``ugly'' case, $ 2\Delta(\lambda) = 1 $ for some $ \lambda $, which means that this grading violates the assumption mentioned in Remark \ref{re:nodeg1}.)

For the second grading, we note that $ \pi := \pi_0(\Gr_G) = \pi_1(G) $, and for simplicity we assume this is a free abelian group.  Thus we get a decomposition of $ R_{G,N} $, labelled by $ \pi $, and thus a $ \pi $-grading on $ \bar A(G,N)$.  

The $ \pi $-grading on $ \bar A(G,N)$ gives us a $ T^! $ action on $ \bar A(G,N)$, where we define $T^! $ to be the torus with weight lattice $ \pi $ (it is the Pontryagin dual of $ \pi_1(G)$).  This is a Hamiltonian action where the moment map comes from $ \ft^! = H^2_G(pt) \rightarrow \bar A(G,N) $.  

\subsection{Partial resolution} \label{se:CoulombRes}
As defined, the Coulomb branch is an affine scheme, usually singular, so it is natural to think about how to construct its resolution.  Assume that we have the extension $ 1 \rightarrow G \rightarrow \tG \rightarrow T \rightarrow 1 $ as before and consider the Coulomb branch $ M_C(\tG, N) $. By the construction in the previous section, we get a Hamiltonian action of $ T^\vee $ on $ M_C(\tG, N)$ (since the weight lattice of $ T^\vee $ equals the coweight lattice of $ T $ which is a quotient of $ \pi_1(\tG) $).  

From \cite[Prop 3.18]{BFN1}, we know that $ M_C(\tG, N) \ssslash_{0,0} T^\vee = M_C(G,N) $ and thus it makes sense to consider $ M_C(\tG, N) \ssslash_{\chi, 0} T^\vee $ for some character $ \chi : T^\vee \rightarrow \Cx $.  This space will be a candidate for the resolution $ Y $ of $ X = M_C(\tG,N) $.

For the deformation $\tY$ of the resolution, we consider $ M_C(\tG, N) \sslash_\chi T^\vee $.  In other words, we don't take the 0 level of the moment map, similar to Example \ref{eg:deformHiggs}.

As this construction does not always give a symplectic resolution, it is important to know if the Coulomb branch always has symplectic singularities, as conjectured in \cite[\S 3(iv)]{BFN1}.   The strongest result in this direction is due to Weekes \cite{W}.

\begin{Theorem}
	If $ G, N $ is a quiver gauge theory corresponding to a quiver without loops or multiple edges, then $ M_C(G,N)$ has symplectic singularities.
\end{Theorem}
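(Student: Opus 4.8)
\emph{Proof proposal.} The plan is to verify Beauville's definition directly, using the reformulation of Namikawa that for a normal variety whose smooth locus carries a holomorphic symplectic form, having symplectic singularities is equivalent to having rational (equivalently, canonical) singularities. So there are three things to supply for $X = M_C(G,N)$: normality, that $X_{\mathrm{reg}}$ is symplectic, and rationality of the singularities. Normality is already part of the Braverman--Finkelberg--Nakajima package \cite{BFN1}. That the Poisson bracket is non-degenerate on a dense open set follows from the integrable system $M_C(G,N) \to \fh/W$ of section~\ref{se:CB}: over the regular locus of the base the fibres are the torus $H^\vee$ and the form is visible there; non-degeneracy on all of $X_{\mathrm{reg}}$ can then be read off from the local structure along symplectic leaves described below. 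Thus the substance is rationality of the singularities.

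For this I would argue by induction on $\dim M_C(G,N)$. The key geometric input is the analogue, for Coulomb branches of quiver gauge theories, of Theorem~\ref{th:Wsymp}(4): every symplectic leaf of $M_C(G,N)$ except the closed (cone) point admits a transversal slice which is again the Coulomb branch of a quiver gauge theory, necessarily of strictly smaller dimension. Concretely, one identifies $M_C(G,N)$ with a generalized affine Grassmannian slice $\oW^\la_\mu$ for the symmetric Kac--Moody group attached to the quiver, whose symplectic leaves and transversal slices are again such slices exactly as in Theorem~\ref{th:Wsymp}. This is precisely where the hypothesis enters: a loop would spoil the generalized Cartan matrix, and a multiple edge would leave the symmetric, simply-laced framework in which this dictionary is available; with either present the transversal-slice picture is not at our disposal. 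Granting it, every point of $X \setminus \{0\}$ has a formal neighbourhood isomorphic to a product of a smaller quiver Coulomb branch with a smooth symplectic leaf. Since symplectic (hence rational) singularities is a formal-local property stable under products with smooth symplectic factors, the inductive hypothesis gives that $X \setminus \{0\}$ has symplectic singularities. The base cases --- points, affine spaces, and Kleinian singularities $\C^2/\Gamma$ as in Example~\ref{eg:AC} --- are classical.

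It remains to cross the cone point. Fix a resolution $\pi : \widetilde X \to X$; by the previous step the pulled-back symplectic form already extends to a regular $2$-form on $\widetilde X \setminus \pi^{-1}(0)$, so one only has to rule out a pole along the (possibly divisorial) exceptional locus over $0$. Here the conical $\Cx$-action does the work: the symplectic form has weight $2 > 0$ and $\pi^{-1}(0)$ is the most contracted locus, and a standard valuative estimate for conical symplectic varieties then forbids any pole of $\pi^*\omega$ along a component of $\pi^{-1}(0)$. Hence $\pi^*\omega$ extends to all of $\widetilde X$, and together with normality of $X$ and the symplectic form on $X_{\mathrm{reg}}$ this is Beauville's definition.

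The main obstacle is the geometric input to the induction, i.e.\ establishing with precision the transversal slices of Coulomb branches of quiver gauge theories. This rests on the identification with generalized affine Grassmannian slices for symmetric Kac--Moody groups and on normality of those slices, which is delicate outside finite type; an alternative would be to prove rationality of the singularities of those Kac--Moody slices head-on (via Frobenius splitting or geometric-Satake-type arguments), but in either approach the hypothesis of no loops and no multiple edges is exactly what makes the required combinatorial--geometric dictionary available, and this is what limits the theorem as stated.
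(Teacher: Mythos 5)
The statement you are proving is quoted in this survey from Weekes \cite{W}; no proof is given here, so I will assess your sketch against the argument that actually exists in the literature. Your overall architecture --- Namikawa's reduction to rational Gorenstein singularities, induction over symplectic leaves via transversal slices, and a separate argument at the minimal leaf --- is the correct shape and is essentially how the theorem is proved. But both load-bearing steps are exactly the ones you do not supply, and the first rests on a theorem that does not apply in the stated generality.

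The main gap is your source for the transversal-slice structure. You propose to get it by identifying $M_C(G,N)$ with a generalized affine Grassmannian slice $\oW^\la_\mu$ ``for the symmetric Kac--Moody group attached to the quiver''. Theorem \ref{th:CoulombGr} provides that identification only when $\fg_Q$ is of finite type; the hypothesis ``no loops or multiple edges'' does not mean finite type --- it allows arbitrary simply-laced graphs, hence affine and wild symmetric Kac--Moody types, for which there is no affine Grassmannian of $G_Q^\vee$ and the Coulomb branch is the \emph{definition} of the slice rather than something identifiable with an independently constructed space. So your induction has no geometric input precisely where the theorem has content beyond what Theorems \ref{th:Wsymp} and \ref{le:genSlice} already cover. (Your gloss on the hypothesis is also off: a quiver with multiple edges still gives a perfectly good symmetric generalized Cartan matrix, so ``leaving the simply-laced framework'' is not what goes wrong there.) In Weekes's proof the required local product decomposition along strata is established directly on the Coulomb-branch side, using the multiplication of generalized slices and the coproduct on shifted Yangians, which exist for arbitrary symmetric types; this is the hard core of \cite{W}, not a citation one can make. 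Indeed, this survey itself warns that ``nothing general is known about strata and transversal slices for Coulomb branches'' outside such special arguments.

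The second gap is at the cone point. There is no ``standard valuative estimate'' showing that a weight-$2$ conical action forbids poles of $\pi^*\omega$ along exceptional divisors over $0$; the grading alone does not control discrepancies. What actually closes the induction is a codimension bound: the closed leaf is a single point, of codimension $\dim X$, so for $\dim X \ge 4$ one invokes the Flenner--Namikawa extension theorems for reflexive $2$-forms across closed subsets of codimension at least $4$, while the two-dimensional case is a Kleinian singularity (as in Example \ref{eg:AC}) and is handled directly. As written, your argument at the vertex is an assertion, not a proof.
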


\subsection{Hypertoric varieties as Coulomb branches}
All hypertoric varieties appear naturally as Coulomb branches.  As in section \ref{se:hypertoric}, fix
$$
1 \rightarrow G \rightarrow (\Cx)^n \rightarrow T \rightarrow 1
$$
leading to an action of $ G $ on $ N = \C^n $.  

From the symplectic duality between Higgs and Coulomb branches and the symplectic duality between Gale dual hypertoric varieties (section \ref{se:sdhv}) it is natural to expect the following result, proven in \cite[Section 4(vii)]{BFN1}.
\begin{Theorem}
$ M_C(G,N) $ is the (singular) hypertoric variety defined by the dual sequence $ 1 \rightarrow T^\vee \rightarrow (\Cx)^n \rightarrow G^\vee \rightarrow 1$.
\end{Theorem}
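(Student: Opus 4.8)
The plan is to compute the algebra $\bar A(G,N)$ explicitly, using crucially that $G$ is a torus, and to recognise the answer as the coordinate ring of the Gale-dual singular hypertoric variety $X^\vee := \Phi^{-1}(0)\sslash T^\vee$, the affine GIT quotient for the $T^\vee$-action on $T^*\C^n$ from Section~\ref{se:hypertoric}. This is the general version of the rank-one computation in Example~\ref{eg:AC}. Since $G$ is a torus, $\Gr_G$ is the discrete lattice $\Lambda := \Hom(\Cx,G)$, with points $t^\la$. Writing $\phi_1,\dots,\phi_n\in\fg^*$ for the weights of $G$ on $N=\C^n$ (the coordinate characters of $(\Cx)^n$ restricted to $G$), the fibre of $R_{G,N}$ over $t^\la$ is $N\otimes\cO\cap t^\la(N\otimes\cO) = \bigoplus_i t^{\max(\langle\phi_i,\la\rangle,0)}\C[[t]]n_i$, an affine space; so $R_{G,N} = \bigsqcup_{\la\in\Lambda}R^\la$ is a disjoint union of affine-space bundles over points. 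As $G(\cO)$ retracts onto $G$ and each $R^\la$ is equivariantly contractible (after renormalisation), $H^\bullet_{G(\cO)}(R^\la,\omega_R)\cong H^\bullet_G(pt)=\C[\fg]$, so as a module $\bar A(G,N)=\bigoplus_{\la\in\Lambda}\C[\fg]\,r_\la$, free over $\C[\fg]=\Sym(\fg^*)$ on the monopole classes $r_\la := [R^\la]$; here $\C[\fg]$ is generated by the $\phi_i$, subject only to the linear relations cutting $\fg^*$ out of $(\C^n)^*$.

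The heart of the argument is the convolution product, which necessarily has the form $r_\la\star r_\mu = e_{\la,\mu}\,r_{\la+\mu}$ with $e_{\la,\mu}\in\C[\fg]$ the equivariant Euler class of the excess-intersection bundle in the convolution diagram. Computing this bundle fibrewise over $\Gr_G$, exactly as in Example~\ref{eg:AC} but one coordinate at a time, one finds $e_{\la,\mu}=\prod_{i=1}^n\phi_i^{\,m_i(\la,\mu)}$ where $m_i(\la,\mu):=\max(\langle\phi_i,\la\rangle,0)+\max(\langle\phi_i,\mu\rangle,0)-\max(\langle\phi_i,\la+\mu\rangle,0)$; this is a nonnegative integer, equal to $\min(|\langle\phi_i,\la\rangle|,|\langle\phi_i,\mu\rangle|)$ when the two pairings have opposite signs and $0$ otherwise. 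Together with the module description this is a complete presentation of $\bar A(G,N)$.

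It remains to match this with $\C[X^\vee]$. On $T^*\C^n=\Spec\C[z_i,w_i]$ the $T^\vee$-moment map ideal $I_\Phi$ is generated by the linear forms $\sum_i\psi_i z_iw_i$ for $\psi\in\ft^\vee$, all of $\Z^n$-degree zero for the grading $\deg z_i=e_i$, $\deg w_i=-e_i$. Hence $\C[z,w]/I_\Phi$ is $\Z^n$-graded, and taking $T^\vee$-invariants picks out the degrees lying in $\ker(\Z^n\to X_*(T))=\Lambda$. In degree $\la$ every invariant monomial factors as $\prod_i(z_iw_i)^{k_i}\cdot z^{\la^+}w^{\la^-}$, where $\la^\pm$ are the componentwise positive and negative parts of $\la$; since $\C[z,w]$ is free, hence flat, over $\C[z_1w_1,\dots,z_nw_n]$, this gives $\C[X^\vee]=\bigoplus_{\la\in\Lambda}\C[\fg]\,\overline{z^{\la^+}w^{\la^-}}$, where now $\C[\fg]=\C[z_iw_i]/(\text{linear relations})$ and these relations coincide with the relations among the $\phi_i$ — which is precisely the statement that the two short exact sequences of tori are dual. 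Finally $\overline{z^{\la^+}w^{\la^-}}\cdot\overline{z^{\mu^+}w^{\mu^-}}=\prod_i(z_iw_i)^{m_i(\la,\mu)}\,\overline{z^{(\la+\mu)^+}w^{(\la+\mu)^-}}$ with the same exponents $m_i(\la,\mu)$, since $\max(\la_i,0)+\max(\mu_i,0)-\max(\la_i+\mu_i,0)$ is exactly that count. Therefore $\phi_i\mapsto z_iw_i$, $r_\la\mapsto\overline{z^{\la^+}w^{\la^-}}$ defines an isomorphism $\bar A(G,N)\xrightarrow{\ \sim\ }\C[X^\vee]$, i.e.\ $M_C(G,N)\cong X^\vee$; one checks along the way that it intertwines the conical $\Cx$-action and the Hamiltonian $T^!=G^\vee$-action of Section~\ref{se:actionCoulomb} with the corresponding actions on the hypertoric side.

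The main obstacle is the computation of the equivariant Euler classes $e_{\la,\mu}$: one must identify the non-transverse convolution locus lying over each point of $\Gr_G$ together with the normal data governing the excess intersection, which is where the techniques of Bezrukavnikov--Finkelberg--Mirkovi\'c \cite{BFM} adapted in \cite{BFN1} do the real work. Once this is in hand, the remainder is bookkeeping with the two dual short exact sequences of tori, and Example~\ref{eg:AC} is precisely the rank-one model from which the general torus case is assembled coordinate by coordinate.
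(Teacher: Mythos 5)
Your proposal is correct and follows the first of the two routes the paper itself indicates: a direct computation of $\bar A(G,N)$ generalizing Example \ref{eg:AC} coordinate by coordinate, identifying the monopole basis $\{r_\lambda\}$ over $H^\bullet_G(pt)$ and matching the excess-intersection Euler classes $\prod_i\phi_i^{m_i(\lambda,\mu)}$ with the relations among the invariant monomials $z^{\lambda^+}w^{\lambda^-}$ in $\C[\Phi^{-1}(0)]^{T^\vee}$. This is essentially the argument of \cite[Section 4(vii)]{BFN1} that the paper cites, carried out in more detail than the paper provides.
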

There are two ways to prove this theorem, both relatively simple.  The first approach is to directly compute this algebra following the method of Example \ref{eg:AC}.  The second approach is to first study the case of $ G = (\Cx)^n $ by this direct method and then conclude the general case by applying a result about Hamiltonian reduction of Coulomb branches \cite[Prop 3.18]{BFN1}.

\subsection{Symplectic duality for Higgs and Coulomb branches}
Here are some indications of the symplectic duality between Higgs and Coulomb branches for an arbitrary $ G, N$.  Let $ Y = T^* N \ssslash_{0,\chi} G $ be the Higgs branch and $ Y^! = M_C(G,N) $ be the Coulomb branch.

First of all, suppose we have the extension $ 1 \rightarrow G \rightarrow \tG \rightarrow T \rightarrow 1 $ as before.  Then $ T $ acts on $ Y  $, while on the other hand the Coulomb branch $ Y^! $ admits a deformation $\tY^! $ over $ \ft$.  

On the other hand, we have the Kirwan map $ \Hom(G, \Cx) \rightarrow H^2(Y, \Z) $ and we have an action of $ T^! $ on $ Y^! $ where $ T^! $ is the torus with weight lattice $ \pi_1(G) $.  As $ \pi_1(G)$ is dual to $ \Hom(G,\Cx)$ (when both are torsion-free), this suggests that $ H^2(Y, \Z) $ is the coweight lattice of the torus acting on $ Y^!$.

Together these paragraphs suggest that (\ref{eq:sdT}) holds.

At the moment, nothing general is known about strata and transversal slices for Coulomb branches.  Some interesting speculation in this direction with a view towards (\ref{eq:sdS}) was described by Nakajima \cite{NakQuestions}.

Webster \cite{WKoszul} proved a general Koszul duality result for category $ \cO $ for Higgs and Coulomb branches (the Koszul duality described in section \ref{se:KDslices} is a special case of this result).  This essentially establishes (\ref{eq:sdO}).

Surprisingly nothing is known about the Hikita conjecture (\ref{eq:sdH}) in general, even though the statement (in the reverse direction) fits well with the definition of the Coulomb branch.

Finally, let us mention that we showed in \cite{HKM} that some aspect of the enumerative geometry passes from the Higgs to the Coulomb branch.  We proved that the quantized Coulomb branch $ A(G,N) $ acts on the cohomology of the spaces of based quasi-maps into $ Y $.

A big obstruction to making further general progress is the following open question.
\begin{Question}
	For which $ G, N $ are the Higgs and the (resolved) Coulomb branches both conical symplectic resolutions?
\end{Question}
We are not aware of any examples beyond the toric and quiver examples.

\section{Affine Grassmannian slices as Coulomb branches}
 We will now see that affine Grassmannian slices arise as Coulomb branches associated to quiver gauge theories and that this perspective allows us to generalize these slices.
 
\subsection{Generalized affine Grassmannian slices} \label{se:nondominant}
 Fix a finite directed graph $Q = (I,E)$ without loops (multiple edges are allowed).  Also, we fix two dimension vectors $ \Bv, \Bw \in \N^I $ and obtain a quiver gauge theory $ G, N $ with flavour torus $ T = (\Cx)^{\sum w_i}$.   
 
 Associated to $ Q $ we have a symmetric Kac-Moody Lie algebra $ \fg_Q $.  We use the dimension vectors $ \Bv, \Bw $ to define a dominant weight $ \lambda $ and another weight $ \mu $ for $ \fg_Q $.  We will write $M_C(\lambda,\mu) := M_C(G,N) $ for the Coulomb branch associated to this quiver gauge theory.

 Assume, until the end of this subsection, that $ \fg_Q $ is of finite-type and let $ G_Q $ denote the associated semisimple group. If $ \mu $ is dominant, recall that in section \ref{se:AGslices}, we defined $ \oW^\lambda_\mu $, an affine Grassmannian slice inside the affine Grassmannian of $ G_Q^\vee $.  Its main feature is that $ (\oW^\lambda_\mu)^+ = \overline{\Gr^\lambda} \cap U^\vee((t))t^\mu $ (here $ U^\vee $ is the maximal unipotent subgroup of $ G_Q^\vee$).

If $\mu$ is not dominant, then Bullimore-Dimofte-Gaiotto \cite{BDG} and Braverman-Finkelberg-Nakajima \cite{BFN2} introduced a generalized affine Grassmannian slice defined by
$$
\oW^\lambda_\mu = U^\vee_1[t^{-1}] T^\vee_1[t^{-1}] t^\mu U^\vee_{-,1}[t^{-1}] \cap \overline{G_Q^\vee(\cO) t^\lambda G_Q^\vee(\cO)}$$
where $ U^\vee, U^\vee_- $ are opposite maximal unipotent subgroups of $ G_Q^\vee $  and $ T^\vee $ is the maximal torus of $ G_Q^\vee$.  Note that this intersection takes place in $G^\vee_Q(\cK) $, rather than $ \Gr = G_Q^\vee(\cK)/ G_Q^\vee(\cO)$.

If $ \mu $ is not dominant, then $\oW^\la_\mu$ does not carry a conical $ \Cx $ action, but it does always have a Hamiltonian action of $ T^\vee$.  
\begin{Theorem} \label{le:genSlice}
\begin{enumerate}
	\item If $ \mu $ is dominant, then $ \oW^\lambda_\mu $ defined here agrees with the affine Grassmannian slice defined earlier.
	\item $(\oW^\lambda_\mu)^{T^\vee} $ is non-empty if and only if $ \mu $ is a weight of $V(\lambda) $.  Moreover, if it is non-empty, then $  (\oW^\lambda_\mu)^+ \cong \overline{\Gr^\lambda} \cap S^\mu $  \cite{Krylov}. 
	\item The symplectic leaves  of $ \oW^\lambda_\mu $ are given by $ \cW^\nu_\mu $ for dominant $ \nu $ such that $ \mu \le \nu \le \lambda $ \cite{MW}.
\end{enumerate}
\end{Theorem}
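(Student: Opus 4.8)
The plan is to handle the three statements in turn, in each case bootstrapping from the dominant-$\mu$ results of Theorems~\ref{th:Satake} and~\ref{th:Wsymp} by explicit calculations in the loop group $G^\vee_Q(\cK)$. The basic tools are: the ``big cell'' isomorphism $U^\vee_1[t^{-1}] \times T^\vee_1[t^{-1}] \times U^\vee_{-,1}[t^{-1}] \xrightarrow{\sim} G^\vee_1[t^{-1}]$, obtained by exponentiating the triangular decomposition of $\fg^\vee_Q \otimes t^{-1}\C[t^{-1}]$, and the conjugation formula $t^\mu\, u_{-\alpha}(f(t^{-1}))\, t^{-\mu} = u_{-\alpha}(t^{-\langle \mu,\alpha\rangle} f(t^{-1}))$ for $\alpha$ a positive root. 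For part~(1), when $\mu$ is dominant one substitutes the big-cell decomposition into $\cW_\mu = G^\vee_1[t^{-1}]\,t^\mu$ and compares with the generalized definition; the only discrepancy is the position of the $U^\vee_{-,1}[t^{-1}]$- and $T^\vee_1[t^{-1}]$-factors relative to $t^\mu$, and since $\langle\mu,\alpha\rangle\ge 0$ the conjugation formula gives $t^\mu\, U^\vee_{-,1}[t^{-1}] \subseteq U^\vee_{-,1}[t^{-1}]\,t^\mu$ and $t^{-\mu}\, U^\vee_{-,1}[t^{-1}]\, t^\mu \subseteq U^\vee_{-,1}[t^{-1}]\cdot U^\vee_-(\cO)$. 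Feeding these through, and using right $G^\vee_Q(\cO)$-invariance of $\overline{G^\vee_Q(\cO)\,t^\lambda\,G^\vee_Q(\cO)}$, one checks that projection to $\Gr$ identifies the generalized slice with $\overline{\Gr^\lambda}\cap\cW_\mu$; this is a direct, if slightly fiddly, verification with no real surprises.

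For part~(2), I would first identify the fixed locus: the $T^\vee$-action scales the $U^\vee$- and $U^\vee_-$-coordinates of the big cell, so a fixed point has trivial $U^\vee_1[t^{-1}]$- and $U^\vee_{-,1}[t^{-1}]$-components, hence lies in $T^\vee_1[t^{-1}]\,t^\mu$; projecting to $\Gr$ it maps to a $T^\vee$-fixed point of $\Gr$, necessarily $t^\nu$ for some $\nu$, and the big-cell structure forces $\nu=\mu$. So $(\oW^\lambda_\mu)^{T^\vee}$ is non-empty exactly when $[t^\mu]\in\overline{\Gr^\lambda}$, and by the support condition in geometric Satake (Theorem~\ref{th:Satake}) this holds iff $\mu^{\mathrm{dom}} \le \lambda$, i.e.\ iff $\mu$ is a weight of $V(\lambda)$. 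For the attracting set, I would run Bialynicki--Birula theory for a generic dominant $\rho:\Cx\to T^\vee$: the limit $\lim_{s\to0} s^\rho\cdot x$ exists exactly when the $U^\vee_{-,1}[t^{-1}]$-component of $x$ vanishes, so $(\oW^\lambda_\mu)^+ = U^\vee_1[t^{-1}]\,T^\vee_1[t^{-1}]\,t^\mu \cap \overline{G^\vee_Q(\cO)\,t^\lambda\,G^\vee_Q(\cO)}$; projecting to $\Gr$ and recognizing $U^\vee_1[t^{-1}]\,T^\vee_1[t^{-1}]\,t^\mu$ as an open piece of the semi-infinite orbit $S^\mu = U^\vee(\cK)\,t^\mu$ then yields $(\oW^\lambda_\mu)^+ \cong \overline{\Gr^\lambda}\cap S^\mu$, extending Theorem~\ref{th:Wsymp}(5); this is \cite{Krylov}.

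For part~(3), the dominant case is exactly Theorem~\ref{th:Wsymp}(4). For general $\mu$, the natural approach is to stratify $\overline{G^\vee_Q(\cO)\,t^\lambda\,G^\vee_Q(\cO)} = \bigsqcup_\nu G^\vee_Q(\cO)\,t^\nu\,G^\vee_Q(\cO)$ over dominant $\nu \le \lambda$ (the Cartan decomposition, whose closure order is the dominance order) and intersect with the generalized slice: one expects $\oW^\lambda_\mu \cap G^\vee_Q(\cO)\,t^\nu\,G^\vee_Q(\cO)$ to be the open generalized slice $\cW^\nu_\mu$, and these locally closed pieces to be smooth and symplectic, with transversal slice $\oW^\lambda_\nu$ along $\cW^\nu_\mu$ read off from the factorization (``beads on a string'') structure of slices over the configuration space of $\mathbb{A}^1$. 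The real difficulty is that for non-dominant $\mu$ there is no contracting $\Cx$-action, so one cannot invoke the structure theory of conical symplectic singularities; one must argue directly, e.g.\ by transporting the problem through the isomorphism $\oW^\lambda_\mu \cong M_C(\lambda,\mu)$ and using the description of symplectic leaves of quiver Coulomb branches via Levi subgroups (sub-dimension-vectors), which produces exactly the range of $\nu$ in the statement. I expect this --- establishing that these strata are precisely the symplectic leaves and computing their transversal slices in the non-conical setting --- to be the main obstacle, and it is where the work of \cite{MW} is concentrated.
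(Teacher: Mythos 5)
First, a caveat: the paper does not prove this theorem --- it is a survey statement assembled from \cite{BDG}, \cite{BFN2}, \cite{Krylov} and \cite{MW} --- so there is no internal proof to match your proposal against; I am judging your reconstruction on its own terms. Your sketch of part (1) is essentially the standard argument (one should add that surjectivity of the projection onto $\overline{\Gr^\la}\cap\cW_\mu$ uses the decomposition $U^\vee_-(\cK)=U^\vee_{-,1}[t^{-1}]\cdot U^\vee_-(\cO)$ to absorb the displaced factor into the right $G^\vee_Q(\cO)$-coset), and for part (3) you correctly locate the difficulty (no contracting $\Cx$-action) and defer to \cite{MW}; note though that Muthiah--Weekes argue intrinsically with the slices and their multiplication/shift maps rather than through the Coulomb branch description you propose.

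The genuine gap is in part (2), and it comes from mishandling the $T^\vee_1[t^{-1}]$-factor under the projection $G^\vee_Q(\cK)\to\Gr$. A nontrivial $h\in T^\vee_1[t^{-1}]$ is \emph{not} in $T^\vee(\cO)$; as an element of $T^\vee(\cK)$ it has a nonzero valuation cocharacter, so $[ht^\mu]=[t^\nu]$ for some $\nu\neq\mu$. Consequently your fixed-point step ``the big-cell structure forces $\nu=\mu$'' is false as stated (the non-emptiness criterion is still correct, but the only-if direction needs a different argument, e.g.\ contracting $T^\vee_1[t^{-1}]$ to $1$ by loop rotation, which preserves $\overline{G^\vee_Q(\cO)t^\la G^\vee_Q(\cO)}$ up to $T^\vee$-translation and shows $t^\mu$ itself lies in the closure). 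More seriously, the same issue breaks your identification of the attracting set: the image of $U^\vee_1[t^{-1}]T^\vee_1[t^{-1}]t^\mu$ in $\Gr$ is \emph{not} contained in $S^\mu$, since $[uht^\mu]=[ut^\nu]\in S^\nu$ with $\nu$ depending on $h$. This is precisely why the statement $(\oW^\la_\mu)^+\cong\overline{\Gr^\la}\cap S^\mu$ is an isomorphism of varieties rather than an equality of subsets of $\Gr$ (contrast with Theorem \ref{th:Wsymp}(5) in the dominant case), and why it is a genuine theorem of Krylov rather than a routine Bialynicki--Birula computation followed by projection. Your proposal correctly computes the attracting set inside $G^\vee_Q(\cK)$ (the $U^\vee_{-,1}[t^{-1}]$-component must vanish), but the construction of the isomorphism with $\overline{\Gr^\la}\cap S^\mu$ from that point is the actual content of \cite{Krylov} and is missing here.
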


In \cite[Theorem 3.10]{BFN2}, Braverman-Finkelberg-Nakajima proved the following result. 
\begin{Theorem} \label{th:CoulombGr}
	The Coulomb branch $ M_C(\lambda,\mu)$ is isomorphic to the generalized affine Grassmannian slice $ \oW^\lambda_\mu $.
\end{Theorem}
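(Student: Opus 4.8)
The plan is to prove the statement on coordinate rings: both $\bar A(G,N) = \C[M_C(\lambda,\mu)]$ and $\C[\oW^\lambda_\mu]$ are finitely generated (Poisson) algebras, and one wants to identify them, compatibly with the Poisson structures coming from the quantizations. The bridge is the \emph{shifted Yangian} $Y_\mu := Y_\mu(\fg_Q)$, a version of the Yangian of $\fg_Q$ whose Drinfeld generators are shifted according to the coweight $\mu$; it carries a filtration whose associated graded is $\C[\cW_\mu]$. One could instead try a more hands-on route (compute the Coulomb branch directly for rank-one pieces as in Example \ref{eg:AC}, and assemble the general quiver by a fusion/Hamiltonian-reduction procedure), but organizing everything through $Y_\mu$ keeps the bookkeeping uniform and works for Kac--Moody $\fg_Q$ as well.

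The first step is to construct an algebra homomorphism $\Phi \colon Y_\mu \to A_\hbar(G,N)$ into the loop-rotation-equivariant, flavour-deformed quantized Coulomb branch. This uses the \textbf{abelianization} of the Coulomb branch: restricting homology from $G(\cO)$-equivariant to $T_G(\cO)$-equivariant, where $T_G = \prod_i(\Cx)^{v_i}$ is a maximal torus of $G$, embeds $A_\hbar(G,N)$ into an explicit algebra of difference operators in the equivariant Chern roots $w_{i,1},\dots,w_{i,v_i}$. In these coordinates one writes down the images of the Drinfeld generators of $Y_\mu$ via the Gerasimov--Kharchev--Lebedev--Oblezin-type formulas: the fundamental monopole operators $r_{\pm\omega_i}$ go to the raising/lowering generators $E_i(u), F_i(u)$, and $H^\bullet_{\tG\times\Cx}(pt)$ maps to the Gelfand--Tsetlin subalgebra. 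One then verifies that these assignments satisfy the defining relations of $Y_\mu$, which produces $\Phi$.

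Next I would show $\Phi$ is surjective with kernel equal to the truncation ideal determined by $\lambda$. Surjectivity follows from the theorem of Braverman--Finkelberg--Nakajima that $A_\hbar(G,N)$ is generated by the equivariant parameters together with the fundamental monopole operators $r_{\pm\omega_i}$, all of which are in the image. The identification of the kernel — that the relations forcing the top monopole operators of a given coweight to become polynomial in the parameters generate \emph{everything} — is the main obstacle: it requires matching Hilbert series (the Cremonesi--Hanany--Zaffaroni/BFN monopole formula for $\bar A(G,N)$ against the character of $\gr Y^\lambda_\mu$, equivalently of $\C[\oW^\lambda_\mu]$) together with enough PBW-type control of the truncated shifted Yangian $Y^\lambda_\mu$ to exclude any further relations. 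Granting this, $\Phi$ descends to an isomorphism $Y^\lambda_\mu \xrightarrow{\ \sim\ } A_\hbar(G,N)$.

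Finally, pass to the classical limit. Both $A_\hbar(G,N)$ and (the $\C[\hbar]$-form of) $Y^\lambda_\mu$ are flat over $\C[\hbar]$, so specializing $\hbar\to 0$ in the previous isomorphism gives $\C[\oW^\lambda_\mu]\cong\bar A(G,N)$ — once one knows the classical limit of $Y^\lambda_\mu$ is indeed $\C[\oW^\lambda_\mu]$. This last identification uses the Gauss-decomposition description $U^\vee_1 T^\vee_1 t^\mu U^\vee_{-,1}$ of the generalized slice: its matrix coefficients provide generators matching $\operatorname{gr} Y^\lambda_\mu$, and for dominant $\mu$ one checks (as in Theorem \ref{le:genSlice}(1) and \cite{KWWY}) that this recovers the geometric slice $\overline{\Gr^\lambda}\cap\cW_\mu$. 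The resulting isomorphism is automatically $\Cx$-equivariant and $T^\vee$-equivariant because $\Phi$ respects the cohomological grading and the $\pi_1(G)$-grading, and it extends over the deformation parameters, so all the auxiliary structures transport as well.
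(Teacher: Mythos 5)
Your strategy runs the logic of \cite{BFN2} backwards, and the step you postpone to the very end is where the circularity lies. Theorem \ref{th:CoulombGr} is \cite[Theorem 3.10]{BFN2}, and it is proved there by purely \emph{classical} means: the monopole operators are computed explicitly after abelianization (restriction to $T_G(\cO)$-equivariance, exactly as you set up) and matched against matrix coefficients of the Gauss-type presentation $U^\vee_1[t^{-1}]\,T^\vee_1[t^{-1}]\,t^\mu\,U^\vee_{-,1}[t^{-1}]$ of the generalized slice; this yields a birational Poisson identification of $M_C(\lambda,\mu)$ with $\oW^\lambda_\mu$ (compatible with the two integrable systems, whose generic fibres agree), which is then upgraded to an isomorphism of affine varieties using normality of both sides and a codimension argument. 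The quantized statement you propose to start from --- $\Phi\colon Y_\mu\to A_\hbar(G,N)$ surjective with kernel the truncation ideal, hence $Y^\lambda_\mu\cong A_\hbar(G,N)$ --- is Theorem B.18 in the appendix to \cite{BFN2}, and its proof \emph{uses} Theorem 3.10: surjectivity is checked modulo $\hbar$, where $\bar A(G,N)\cong\C[\oW^\lambda_\mu]$ is already known, and the identification of $\operatorname{gr}Y^\lambda_\mu$ with $\C[\oW^\lambda_\mu]$ is a \emph{corollary} of the two theorems combined, not an available input.

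Concretely, then, the gap is your final sentence ``once one knows the classical limit of $Y^\lambda_\mu$ is indeed $\C[\oW^\lambda_\mu]$,'' together with the Hilbert-series step meant to justify it. Since $Y^\lambda_\mu$ is only \emph{defined} as an image in difference operators (the survey itself notes that an explicit presentation is conjectural), the GKLO construction gives for free only a comparison map between $\operatorname{gr}Y^\lambda_\mu$ and $\C[\oW^\lambda_\mu]$; that it is an isomorphism was precisely the open conjecture of \cite{KWWY}, known unconditionally only in type $A$ (via the Mirkovi\'c--Vybornov isomorphism \cite{MVy}) and only for dominant $\mu$. For non-dominant $\mu$, Theorem \ref{le:genSlice}(1) says nothing, there was no prior quantization result, and the character of $\C[\oW^\lambda_\mu]$ is not known independently of the theorem you are trying to prove (these varieties were introduced in order to \emph{be} Coulomb branches), so matching against the Cremonesi--Hanany--Zaffaroni monopole formula cannot close the loop. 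To make your route work you would need an independent computation of $\operatorname{gr}Y^\lambda_\mu$ in all simply-laced types for arbitrary $\mu$; absent that, one must, as Braverman--Finkelberg--Nakajima do, prove the classical isomorphism geometrically first, after which the quantized, deformed, and equivariant refinements follow by exactly the flatness and grading arguments you sketch.
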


\begin{Remark}
	The group $ G_Q $ is necessarily simply-laced by construction.  However, the notion of Coulomb branch was generalized by Nakajima-Weekes \cite{NW} to add data corresponding to ``symmetrizers'' (in the sense of symmetrizable Cartan matrices).  With this definition, they generalized  Theorem \ref{th:CoulombGr} so that $ G_Q $ could be any semisimple group.
\end{Remark}
\subsection{Geometric Satake conjecture}
Theorem \ref{th:CoulombGr} allows us to define ``affine Grassmannian slices'' associated to any symmetric Kac-Moody Lie algebra $ \fg_Q $, even though there is no affine Grassmannian and geometric Satake correspondences in this setting.

By section \ref{se:actionCoulomb}, $M_C(\lambda, \mu) $ carries an action of $ T^\vee := (\Cx)^I $ since $ \pi_1(G) = \Z^I $.  Theorem \ref{th:SatakeSlice} and Lemma \ref{le:genSlice} motivate us to expect that the attracting set of $ M_C(\lambda, \mu) $ encodes the weight space of the irreducible $ \fg_Q $ representation $ V(\lambda) $.  The following is a reformulation of the conjecture from \cite[Conj 3.25]{BFN2}; see also Finkelberg \cite{Fin} for more on this conjecture.

\begin{Conjecture} \label{co:Sat}
	 $M_C(\lambda,\mu)^{T^\vee} $ is non-empty if and only if $ \mu $ is a weight of $V(\lambda) $.  Moreover, if it is non-empty, then  $  H(M_C(\lambda,\mu)^+) \cong V(\lambda)_\mu$.
	\end{Conjecture}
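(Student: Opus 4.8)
The plan is to treat separately the finite-type case, where the statement is close to results recalled above, and the genuinely new symmetric Kac--Moody case, where a $\fg_Q$-module must be produced by hand.

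When $\fg_Q$ is of finite type this is essentially known: by Theorem \ref{th:CoulombGr} we may replace $M_C(\lambda,\mu)$ by the generalized slice $\oW^\lambda_\mu$; Lemma \ref{le:genSlice}(2) gives $(\oW^\lambda_\mu)^+\cong\overline{\Gr^\lambda}\cap S^\mu$ with the fixed-point set non-empty precisely when $\mu$ is a weight of $V(\lambda)$; and the corollary of geometric Satake (Theorem \ref{th:Satake}) identifies $H(\overline{\Gr^\lambda}\cap S^\mu)$ with $V(\lambda)_\mu$ (for dominant $\mu$ this is already Theorem \ref{th:SatakeSlice}). So all the content lies in the Kac--Moody case, which the rest of the plan addresses.

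Set $\mathsf{V}:=\bigoplus_\mu H(M_C(\lambda,\mu)^+)$, placing the $\mu$-summand in weight $\mu$; note that $M_C(\lambda,\mu)^+$ is non-empty exactly when $M_C(\lambda,\mu)^{T^\vee}$ is, and then has non-zero top Borel--Moore homology, so the ``if and only if'' of the conjecture follows once $\mathsf{V}$ is identified with $V(\lambda)$ as a weight-graded space. To do that I would first endow $\mathsf{V}$ with an action of $\fg_Q$: operators $e_i,f_i$ of weights $\pm\alpha_i$, built from rank-one Hecke correspondences relating $M_C(\lambda,\mu)^+$ to $M_C(\lambda,\mu\pm\alpha_i)^+$ --- in the BFN model these are the minuscule modifications at the $i$-th gauge factor $GL(V_i)\subset G$, i.e.\ the Coulomb-branch counterpart (under symplectic duality) of Nakajima's Hecke correspondences on quiver varieties --- together with the Cartan action read off from the $T^\vee$-weights. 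One then has to verify that $e_i,f_i$ satisfy the defining relations of $\fg_Q$, above all the Serre relations. I expect this to be the main obstacle: in the Kac--Moody case there is no affine Grassmannian and no geometric Satake to lean on. A promising substitute is to pass to the quantization $A(\lambda,\mu)$, which is the truncated shifted Yangian $Y^\lambda_\mu(\fg_Q)$ (by \cite{BFN2} in type $ADE$, \cite{NW} in general), and to extract $e_i,f_i$ from the Yangian coproduct and its parabolic-induction functors, inheriting the Serre relations from the Yangian.

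Granting the $\fg_Q$-action, one recognizes $V(\lambda)$ as follows. By construction of the quiver gauge theory the weights occurring in $\mathsf{V}$ all have the form $\lambda-\sum_i v_i\alpha_i$ with $v_i\ge 0$, so $\lambda$ is the unique maximal weight; moreover $M_C(\lambda,\lambda)$ is a single reduced point (the top generalized slice $\oW^\lambda_\lambda$ is $\{t^\lambda\}$), so $H(M_C(\lambda,\lambda)^+)=\C v_\lambda$ and $v_\lambda$ is a highest-weight vector of dominant weight $\lambda$. It then suffices to show (a) $\mathsf{V}$ is integrable, i.e.\ the $e_i,f_i$ act locally nilpotently, and (b) $\mathsf{V}=U(\fg_Q)v_\lambda$; then standard Kac--Moody representation theory forces $\mathsf{V}\cong V(\lambda)$, which yields both assertions of the conjecture at once. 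Step (a) amounts to the vanishing $H(M_C(\lambda,\mu)^+)=0$ once $\mu$ leaves the $\mathfrak{sl}_2$-string through $\lambda$ in direction $\alpha_i$, and step (b) to the irreducibility of each $\oW^\lambda_\mu$ (integral by the BFN structure theorem) together with transversality of the Hecke correspondences. Alternatively, one could bypass (a)--(b) by establishing the character identity $\sum_\mu\bigl(\dim H(M_C(\lambda,\mu)^+)\bigr)e^\mu=\operatorname{ch}V(\lambda)$ through comparison with the product monomial crystal, which is known to have the correct character for arbitrary symmetric Kac--Moody type (cf.\ \cite{moncrystals,KTWWY}), and then combine it with the embedded copy $U(\fg_Q)v_\lambda\cong V(\lambda)$. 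In every route, the hard step is the construction and relation-checking for the $\fg_Q$-action directly on Coulomb branches; the integrability/vanishing bound is the secondary difficulty.
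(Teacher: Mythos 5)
The statement you are proving is stated in the paper as a \emph{conjecture} (a reformulation of \cite[Conj 3.25]{BFN2}); the paper offers no proof, and records that it is known only when $\fg_Q$ is of finite type and, by Nakajima's bow-variety argument \cite{NakSatake}, in affine type $A$. Your reduction of the finite-type case is correct and is exactly the paper's own reasoning: Theorem \ref{th:CoulombGr} replaces $M_C(\lambda,\mu)$ by $\oW^\lambda_\mu$, Theorem \ref{le:genSlice}(2) (Krylov) identifies $(\oW^\lambda_\mu)^+$ with $\overline{\Gr^\lambda}\cap S^\mu$ and settles the emptiness criterion, and geometric Satake gives $H(\overline{\Gr^\lambda}\cap S^\mu)\cong V(\lambda)_\mu$.

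For general symmetric Kac--Moody type, however, what you have written is a research program rather than a proof, and the gap sits precisely at the step you flag as ``the main obstacle'': no construction of operators $e_i,f_i$ on $\bigoplus_\mu H(M_C(\lambda,\mu)^+)$ satisfying the $\fg_Q$-relations is currently available, and neither of your proposed substitutes closes the gap. The identification $A(\lambda,\mu)\cong Y^\lambda_\mu$ is only established in finite type (\cite[Theorem B.18]{BFN2}; in general one has at best a conjectural presentation), so the Yangian coproduct cannot yet be ``inherited''; the categorical $\fg_Q$-action of \cite{restaction} lives on $\bigoplus_\mu \Oof{A(\lambda,\mu)_\theta}$ and would transport to $H(M_C(\lambda,\mu)^+)$ only through the characteristic cycle map (\ref{eq:ccmap}), which requires a localization theorem and, more basically, a conical symplectic resolution of $M_C(\lambda,\mu)$ with finitely many torus fixed points --- existence of which is itself an open question in the paper. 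The same issue undermines your fallback via product monomial crystals: matching $\dim H(M_C(\lambda,\mu)^+)$ with a crystal count presupposes the hyperbolic-BBD/fixed-point machinery that is not known to apply here. So the finite-type half of your argument stands, but the Kac--Moody half correctly identifies, without resolving, exactly the point that makes this a conjecture.
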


When $ Q $ is of affine type $A$, then Nakajima-Takayama \cite{NakTak} proved that $ M_C(\lambda, \mu) $ is a \textbf{bow variety}, a certain modification of a quiver variety introduce by Cherkis \cite{Cherkis}.  Nakajima \cite{NakSatake} used this bow variety description in order to prove the above conjecture in this case.

\subsection{Categorical $ \fg_Q $ actions coming from Coulomb branches}
Let $ A(\lambda, \mu)_\theta $ be a quantization of $ M_C(\lambda, \mu)$, as defined in section \ref{se:CoulombQuant} (here $ \theta \in \ft = \C^{\sum w_i}$).  When $ \fg_Q $ is finite-type, then $ A(\lambda, \mu)_\theta $ is isomorphic to a truncated shifted Yangian $Y^\lambda_\mu$ by \cite[Theorem B.18]{BFN2}.

From Conjecture \ref{co:Sat} and (\ref{eq:ccmap}), it is natural to expect that category $ \cO$ for $ A(\lambda, \mu)_\theta $, with $ \mu $ varying, can be used to categorify $ V(\lambda)$, or the associated tensor product representation $V(\ul) $.  (There is also a version of Conjecture \ref{co:Sat} where we use the partial resolution of $ M_C(\lambda, \mu) $ defined in section \ref{se:CoulombRes}.)

In \cite{restaction}, we defined restriction and induction functors between category $ \cO $ for Coulomb branches, generalizing the Bezrukavnikov-Etingof \cite{BE} restriction and induction functors between Cherednik algebras.  Using the relation with KLRW algebras, we proved the following result.
\begin{Theorem}
	Assume that $ \theta $ is integral.  These restriction/induction functors define a categorical $ \fg_Q $ action on $ \bigoplus_\mu \Oof{A(\lambda,\mu)_\theta}$.  This categorifies $ V(\lambda) $ when $ \theta = 0 $ and categorifies $ V(\ul) $ for generic $ \theta $.
\end{Theorem}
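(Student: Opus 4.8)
The plan is to reduce the statement to the already-established categorical $\fg_Q$-action on modules over Khovanov–Lauda–Rouquier–Webster (KLRW) algebras, by matching the geometric restriction and induction functors of \cite{restaction} with the diagrammatic categorification functors. First I would recall the precise form of those functors: for each $i\in I$, passing between the dimension vectors $\Bv$ and $\Bv+e_i$ (which, under $\lambda-\mu=\sum_j v_j\alpha_j$, is exactly the replacement $\mu\rightsquigarrow\mu\pm\alpha_i$) is a ``one-step parabolic'' relation between the relevant gauge groups, and the Bezrukavnikov–Etingof-type construction of \cite{restaction} produces from it a biadjoint pair of functors between $\Oof{A(\lambda,\mu)_\theta}$ and $\Oof{A(\lambda,\mu+\alpha_i)_\theta}$, which we take (in the standard way) as the raising functor $E_i$ and the lowering functor $F_i$. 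Assembling these over all $\mu$ gives endofunctors of $\bigoplus_\mu\Oof{A(\lambda,\mu)_\theta}$ with the correct weight behaviour. The equivariant parameters entering the convolution kernels furnish the candidate ``dot'' endomorphisms of $E_i$, the natural transformations relating $E_iE_j$ with $E_jE_i$ furnish the candidate ``crossings'', and together with the units and counits of the biadjunctions these assemble into a candidate $2$-representation of the relevant categorical Kac–Moody structure.

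Second --- and this is where the real work lies --- I would identify this candidate $2$-representation with Webster's categorical action on $\bigoplus_\mu\Mof{T^\lambda_\mu}$ under the equivalence $\Oof{A(\lambda,\mu)_\theta}\cong\Mof{T^\lambda_\mu}$ coming from \cite{KTWWY} (for generic integral $\theta$), together with its $\theta=0$ degeneration, in which the red strands of $T^\lambda_\mu$ collapse to a single point and $T^\lambda_\mu$ becomes the KLRW algebra categorifying the $\mu$-weight space of $V(\lambda)$ rather than of $V(\ul)$. On the KLRW side the functors $E_i,F_i$ are given by tensoring with explicit bimodules built from the diagrammatic algebra, and both the quiver-Hecke relations among dots and crossings and the $\ssl_2$-categorification relations (biadjointness, and the relation governing $E_iF_i$ versus $F_iE_i$) hold by construction. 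To transport all of this it suffices to match the two families of functors together with their generating $2$-morphisms; I would pin the functors down by a universal property --- compatibility with a common functor such as the highest-weight functor $M\mapsto M^{A^+}$ to modules over the $B$-algebra (equivalently over the Gelfand–Tsetlin subalgebra) --- which determines a functor between highest-weight categories up to canonical isomorphism, and then check that the dots and crossings are carried to Webster's. The main obstacle is precisely this comparison: the two descriptions of $E_i,F_i$ sit on opposite sides of the nontrivial equivalence of \cite{KTWWY}, so one must control that equivalence finely enough to track the convolution kernels and their natural endomorphisms, not merely the objects they act on.

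Third, once the matching is in place, Webster's results on KLRW algebras \cite{W1409} --- that $\bigoplus_\mu\Mof{T^\lambda_\mu}$ carries a categorical $\fg_Q$-action --- transfer to $\bigoplus_\mu\Oof{A(\lambda,\mu)_\theta}$, giving the first assertion. For the decategorification I would pass to Grothendieck groups: Webster's theory identifies $\bigoplus_\mu K(\Mof{T^\lambda_\mu})$, for $\theta$ generic, with the tensor-product module $V(\ul)$ as a $\fg_Q$-representation, and its $\theta=0$ specialization with $V(\lambda)$; combining this with the equivalences $\Oof{A(\lambda,\mu)_\theta}\cong\Mof{T^\lambda_\mu}$ yields $\bigoplus_\mu K(\Oof{A(\lambda,\mu)_\theta})\cong V(\ul)$ for generic integral $\theta$ and $\cong V(\lambda)$ for $\theta=0$, with the categorical action constructed above lifting the $\fg_Q$-module structure. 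In finite type one may alternatively check the decategorification directly, using the characteristic cycle isomorphism \eqref{eq:ccmap} together with Theorem \ref{th:SatakeSlice} (and, for $\theta=0$, Conjecture \ref{co:Sat}), but the route through KLRW algebras applies uniformly for all $Q$.
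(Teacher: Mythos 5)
Your proposal follows essentially the same route the paper indicates for this result: construct $E_i,F_i$ as Bezrukavnikov--Etingof-style restriction/induction functors between the categories $\Oof{A(\lambda,\mu)_\theta}$ for weights differing by $\alpha_i$, then establish the categorical $\fg_Q$-action and the decategorification by comparing with Webster's KLRW-algebra categorification through the equivalence $\Oof{A(\lambda,\mu)_\theta}\cong\Mof{T^\lambda_\mu}$ of \cite{KTWWY}. You also correctly identify the genuinely hard step (tracking the functors and their generating $2$-morphisms across that equivalence), which is where the technical content of \cite{restaction} lies.
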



\begin{thebibliography}{99999}
	\bibitem[AO]{AO}
	M.~Aganagic, A.~Okounkov, Elliptic stable envelopes, \textit{J. Amer. Math. Soc.} \textbf{34} (2021), no. 1, 79--133. 
	
	\bibitem[ABM]{BezReal} 
	R.~Anno, R.~Bezrukavnikov, I.~Mirkovi\'c, ``Stability conditions for Slodowy slices and real variations of stability, \textit{Mosc. Math. J.} \textbf{15} (2015), 187--203.
	
	\bibitem[BaKa]{BK}
	P.~Baumann, J.~Kamnitzer, Preprojective algebras and MV polytopes, \textit{Represent. Theory} \textbf{16} (2012), 152--188.
	
	\bibitem[BB]{BB}
	A.~Beilinson, J.~Bernstein, Localisation de g-modules, \textit{C. R. Acad. Sci. Paris Sér. I Math.} \textbf{292} (1981), no. 1, 15--18.
	
	\bibitem[BGS]{BGS}
	A.~Beilinson, V.~Ginzburg, W.~Soergel, Koszul duality patterns in representation theory, \textit{J. Amer. Math. Soc.} \textbf{9} (1996), no. 2, 473--527.
	
\bibitem[BE]{BE}
R.~Bezrukavnikov, P.~Etingof, Parabolic induction and restriction functors for rational Cherednik algebras, \textit{Selecta Math.} \textbf{14} (2009), no. 3-4, 397--425.

	\bibitem[BFM]{BFM}
R.~Bezrukavnikov, M.~Finkelberg, I.~Mirkovi\'c, Equivariant homology and K-theory of affine Grassmannians and Toda lattices, \textit{Compos. Math.} \textbf{141} (2005), no. 3, 746--768. 
	
	\bibitem[BeKa]{BeKa}
	R.~Bezrukavnikov, D.~Kaledin, Fedosov quantization in the algebraic context, \textit{Moscow Math. J.} \textbf 4
	(2004), 559--592.

	\bibitem[BD]{BD}
	R.~Bielowski, A.~Dancer, The geometry and topology of toric hyperkähler manifolds, \textit{Comm. Anal. Geom.} \textbf 8 (2000), no. 4, 727--760.
	
		\bibitem[BLPW1]{BLPWGale}
	T.~Braden, A.~Licata, N.~Proudfoot, B.~Webster,
	Gale duality and Koszul duality, \textit{Adv. Math.} \textbf{225} (2010), no. 4, 2002–-2049.
	\bibitem[BLPW2]{BLPWhO}
	T.~Braden, A.~Licata, N.~Proudfoot, B.~Webster, Hypertoric category O, \textit{Adv. Math.} \textbf{231} (2012), no. 3-4, 1487–-1545.
	\bibitem[BLPW3]{BLPW}
	T.~Braden, A.~Licata, N.~Proudfoot, B.~Webster, Quantizations of conical symplectic resolutions II: category O and symplectic duality, with an appendix by I. Losev, \textit{Ast\'erisque} \textbf{384} (2016), 75--179.
	\bibitem[BPW]{BPW}
	T.~Braden, N.~Proudfoot, B.~Webster, Quantizations of conical symplectic resolutions I: local and global structure, \textit{Ast\'erisque} \textbf{384} (2016), 1--73.

	\bibitem[BFN1]{BFN1}
	A.~Braverman, M.~Finkelberg, H.~Nakajima, Towards a mathematical definition of Coulomb branches of 3-dimensional N=4 gauge theories, II. \textit{Adv. Theor. Math. Phys.} \textbf{22} (2018), no. 5, 1071--1147.
	
	
		\bibitem[BFN2]{BFN2}
	A.~Braverman, M.~Finkelberg,  H.~Nakajima, Coulomb branches of 3d N=4 quiver gauge theories and slices in the affine Grassmannia, with two appendices by Braverman, Finkelberg, Joel Kamnitzer, Ryosuke Kodera, Nakajima, Ben Webster and Alex Weekes, \textit{Adv. Theor. Math. Phys.} \textbf{23} (2019), no. 1, 75–-166. 
	
	\bibitem[BMO]{BMO}
	A.~Braverman, D.~Maulik, A.~Okounkov, Quantum cohomology of the Springer resolution, \textit{Adv. Math.} \textbf{227} (2011), no. 1, 421--458.
	
	\bibitem[BDG]{BDG}
	M.~Bullimore, T.~Dimofte, D.~Gaiotto, The Coulomb branch of 3d N=4 theories, \textit{Comm. Math. Phys.} \textbf{354} (2017), no. 2, 671--751.
	
	\bibitem[Ch]{Cherkis}
	S.~A.~Cherkis, Moduli spaces of instantons on the Taub-NUT space, \textit{Comm. Math. Phys.} \textbf{290} (2009), no. 2, 719--736.
	
	\bibitem[CG]{CG}
	N.~Chriss, V.~Ginzburg, \textit{Representation theory and complex geometry}, Birkhäuser Boston, Inc., Boston, MA, 1997. x+495 pp.
	
	\bibitem[CHZ]{CHZ}
	S.~Cremonesi, A.~Hanany, A.~Zaffaroni, Monopole operators and Hilbert series of
	Coulomb branches of $3d \ \mathcal N = 4 $ gauge theories, \textit{JHEP} 1401 (2014) 005.
	
	\bibitem[Da]{Danilenko}
	I.~Danilenko, Quantum Cohomology of Slices of the Affine Grassmannian, Thesis (Ph.D.)–Columbia University, 2020, 184 pp.
	
	\bibitem[Dr]{Dr}
V.~Drinfeld, On algebraic spaces with an action of $G_m$, arXiv:1308.2604.

	\bibitem[Fi]{Fin}
	M.~Finkelberg, Double affine Grassmannians and Coulomb branches of 3d N = 4 quiver gauge theories, \textit{Proceedings of the International Congress of Mathematicians—Rio de Janeiro 2018}, Vol. II Invited lectures, 1283–-1302, World Sci. Publ., Hackensack, NJ, 2018.
	
	\bibitem[Hi]{Hikita}
	T.~Hikita, An algebro-geometric realization of the cohomology ring of Hilbert scheme of points in the affine plane, \textit{Int. Math. Res. Not.} 2017, no. 8, 2538--2561.
	
	\bibitem[G]{G}
	V.~Ginzburg, \textit{Perverse sheaves on a loop group and Langlands duality.} math.AG/9511007, 1995.
	
	\bibitem[GN]{GN}
	J.~Grimminger, A.~Hanany, Hasse Diagrams for 3d N=4 Quiver Gauge Theories -- Inversion and the full Moduli Space, \textit{J. High Energy Physics} (2020).
	
	
	\bibitem[HKM]{HKM}
	 J.~Hilburn, J.~Kamnitzer, A.~Weekes, BFN Springer theory, 68 pages, arXiv:2004.14998.
	
	\bibitem[IS]{IS}
	 K.~Intriligator, N.~Seiberg, Mirror symmetry in three-dimensional gauge theories, \textit{Phys. Lett. B} \textbf{387} (1996), no. 3, 513--519.
	
	\bibitem[Kal]{Ka}
	D.~Kaledin, Symplectic singularities from the Poisson point of view, \textit{J. Reine Angew. Math.} \textbf{600}
	(2006), 135--156.
	
	\bibitem[Kam]{mvcycle}
	J.~Kamnitzer, Mirkovi\'c-Vilonen cycles and polytopes, \textit{Annals of Mathematics}, \textbf{171}, no. 1, (2010) 245--294.
	
	\bibitem[KMP]{qh}
J.~Kamnitzer, M.~McBreen, N.~Proudfoot, The quantum Hikita conjecture, \textit{Adv. Math.} \textbf{390} (2021), 53 pp.
	
	\bibitem[KTWWY1]{moncrystals}
	J.~Kamnitzer, P.~Tingley, B.~Webster, A.~Weekes, O.~Yacobi, Highest weights for truncated shifted Yangians and product monomial crystals, \textit{J. Combinatorial Algebra} \textbf{3} (2019), 237--303.
	
	\bibitem[KTWWY2]{KTWWY}
	J.~Kamnitzer, P.~Tingley, B.~Webster, A.~Weekes, O.~Yacobi, On category O for affine Grassmannian slices and categorified tensor products, \textit{Proc. London Math. Soc.} \textbf{119} (2019) no. 5, 1179--1233.
	
	\bibitem[KWWY1]{KWWY}
	J.~Kamnitzer, B.~Webster, A.~Weekes, O.~Yacobi, Yangians and quantization of slices in the affine Grassmannian, \textit{Alg. Num. Theory} \textbf{8} (2014) 857--893.
	
	\bibitem[KWWY2]{restaction}
	J.~Kamnitzer, B.~Webster, A.~Weekes, O.~Yacobi, Lie algebra actions on module categories for truncated shifted Yangians, 66 pages, arXiv:2203.12429.
	
	\bibitem[Kr]{Krylov}
V.~Krylov, Integrable crystals and a restriction on a Levi subgroup via generalized slices in the affine Grassmannian, \textit{Funktsional. Anal. i Prilozhen} \textbf{52} (2018), no. 2, 40--65.

	\bibitem[Lo2]{Losev}
I.~Losev, Isomorphisms of quantizations via quantization of resolutions. \textit{Adv. Math.} \textbf{231} (2012), no. 3-4, 1216--1270. 
	
	\bibitem[Lo1]{Lo}
	I.~Losev, Deformations of symplectic singularities and Orbit method for
	semisimple Lie algebras, \textit{Selecta Math.} \textbf{28} (2022).
	

	
	\bibitem[Lu]{L}
	G.~Lusztig, \textit{Singularities, character formulas and a q-analog of
		weight multiplicities.}
	Ast\'erisque \textbf{101-102} (1983), 208--229.
	
	\bibitem[MO]{MO}
	D.~Maulik, A.~Okounkov, Quantum groups and quantum cohomology, \textit{Ast\'erisque} \textbf{408} (2019), ix+209 pp.
	
	\bibitem[MS]{McBreenShenfeld}
	M.~McBreen, D.~Shenfeld, Quantum cohomology of hypertoric varieties,
	\textit{Lett. Math. Phys.} \textbf{103} (2013), no. 11, 1273--1291.
	
	\bibitem[MN]{MN}
	K.~McGerty, T.~Nevins, Springer theory for symplectic Galois groups, arXiv:1904.10497.
	
	
	\bibitem[MV]{MV}
	I.~Mirkovi\'c, K.~Vilonen, Geometric Langlands duality and representations of algebraic groups over commutative rings, \textit{Ann. of Math.} \textbf{166} (2007), no. 1, 95--143.
	
	\bibitem[MVy]{MVy}
	I.~Mirkovi\'c, M.~Vybornov, \textit{On quiver varieties and affine Grassmannians of type A.} C.\ R.\ Math.\ Acad.\ Sci.\ Paris \textbf{336} (2003), 207--212.
	
	\bibitem[MW]{MW}
	D.~Muthiah, A.~Weekes, Symplectic leaves for generalized affine Grassmannian slices, to appear in \textit{Ann. Sci. \'Ec. Norm. Sup\'er.}, arXiv:1902.09771.
	
	\bibitem[Nak1]{Nak94}
		H.~Nakajima, Instantons on ALE spaces, quiver varieties, and Kac-Moody algebras, \textit{Duke Math. J.} \textbf{76} (1994), no. 2, 365--416.
	
	\bibitem[Nak2]{Nak98}
	H.~Nakajima, Quiver varieties and Kac-Moody algebras, \textit{Duke Math. J.} \textbf{91} (1998), no. 3, 515--560.

\bibitem[Nak3]{Nakbook}
H.~Nakajima, \textit{Lectures on Hilbert schemes of points on surfaces}, University Lecture Series, 18. AMS Providence, RI, 1999.

	\bibitem[Nak4]{NakTen}
	H.~Nakajima, Quiver varieties and tensor products, \textit{Invent. Math.} \textbf{146} (2001), no. 2, 399--449.

	
	\bibitem[Nak5]{NaQA}
	H.~Nakajima, Quiver varieties and finite-dimensional representations of quantum affine algebras, \textit{J. Amer. Math. Soc.} \textbf{14} (2001), no. 1, 145--238.

	\bibitem[Nak6]{NakQuestions}
H.~Nakajima, Questions on provisional Coulomb branches of 3-dimensional N=4 gauge theories, Representation theory, harmonic analysis and differential equation (Kyoto, 2015), \textit{Sūrikaisekikenkyūsho Kōkyūroku} No. 1977 (2015), 57--76.
	
	\bibitem[Nak7]{NakBFN}
	H.~Nakajima, Towards a mathematical definition of Coulomb branches of 3-dimensional N=4 gauge theories I, \textit{Adv. Theor. Math. Phys.} \textbf{20} (2016), no. 3, 595--669. 
	
	
	\bibitem[Nak8]{NakPCMI}
	H.~Nakajima, Lectures on perverse sheaves on instanton moduli spaces. Geometry of moduli spaces and representation theory, 381–436, IAS/Park City Math. Ser., 24, Amer. Math. Soc., Providence, RI, 2017.
	
	\bibitem[Nak9]{NakSatake}
	H.~Nakajima, Towards geometric Satake correspondence for Kac-Moody algebras -- Cherkis bow varieties and affine Lie algebras of type A, to appear in \textit{Ann. Sci. \'Ec. Norm. Sup\'er.} arXiv:1810.04293.

	\bibitem[NT]{NakTak}
	H.~Nakajima, Y.~Takayama, Cherkis bow varieties and Coulomb branches of quiver gauge theories of affine type A, \textit{Selecta Math.} \textbf{23} (2017), no. 4, 2553–2633.
	
	
	\bibitem[NW]{NW}
	H.~Nakajima, A.~Weekes, Coulomb branches for quiver gauge theories with symmetrizers, to appear in \textit{J. Eur. Math. Soc.}
	
	\bibitem[Nam1]{Nam08}
	Y.~Namikawa, Flops and Poisson deformations of symplectic varieties,
	\textit{Publ. Res. Inst. Math. Sci.} \textbf{44} (2008), no. 2, 259--314.
	
	\bibitem[Nam2]{Nam0809}
	Y.~Namikawa, Induced nilpotent orbits and birational geometry,
	\textit{Adv. Math.} \textbf{222} (2009), no. 2, 547–-564.
	
	\bibitem[Ok]{okounkov2015enumerative}
	A.~Okounkov, Enumerative geometry and geometric representation theory, Algebraic geometry: Salt Lake City 2015, 419--457, \textit{Proc. Sympos. Pure Math.} 97.1, Amer. Math. Soc., Providence, RI, 2018.
	
	\bibitem[P]{Nick}
	N.~Proudfoot, A survey of hypertoric geometry and topology, Toric topology, 323--338, \textit{Contemp. Math.}, 460, Amer. Math. Soc., Providence, RI, 2008.
	
	\bibitem[PW]{PW}
	N.~Proudfoot, B.~Webster, Intersection cohomology of hypertoric varieties,
	\textit{J. Algebraic Geom.} \textbf{16} (2007), no. 1, 39--63.
	
	\bibitem[Sc]{Sch}
	T.~Schedler, Equivariant slices for symplectic cones, \textit{Int. Math. Res. Not.} (2017), no. 12, 3801–-3847.
	
	\bibitem[SZ]{SZ}
	A.~Smirnov, Z.~Zhou 3d Mirror Symmetry and Quantum K-theory of Hypertoric Varieties, arXiv:2006.00118.
	
	\bibitem[VV]{VV}
	M.~Varagnolo, E.~Vasserot, Perverse sheaves and quantum Grothendieck rings, Studies in memory
	of Issai Schur (Chevaleret/Rehovot, 2000), 2003, pp. 345--365.
	
	\bibitem[Web1]{W1409}
	B.~Webster, On generalized category O for a quiver variety, \textit{Math. Ann.} \textbf{368} (2017), no. 1-2, 483--536.
	
	\bibitem[Web2]{WKoszul}
	B.~Webster, Koszul duality between Higgs and Coulomb categories O, arXiv:1611.06541.
	
	\bibitem[Wee]{W}
	A.~Weekes, Quiver gauge theories and symplectic singularities, \textit{Adv. Math.} \textbf{196} (2022).
	
	
\end{thebibliography}
\end{document}